	\theoremstyle{plain}
		\newtheorem{thm}{Theorem}[section]	
		\newtheorem{cor}[thm]{Corollary}	
		\newtheorem{maincor}{\textsc{Corollary}}		%
		\newtheorem{lem}[thm]{Lemma}		
		\newtheorem{prop}[thm]{Proposition}
	\theoremstyle{definition}
		\newtheorem{defn}[thm]{Definition}	
	\theoremstyle{remark}
		\newtheorem{rem}[thm]{Remark}		
		\newtheorem{note}[thm]{Notation}		
\numberwithin{equation}{section}	
\newcommand{\GL}{\mathrm{GL}}
\newcommand{\eht}{\mathbb E_{h}^T}
\newcommand{\eh}{\mathbb E_{h}}
\newcommand{\ud}{u_\delta}
\newcommand{\ima}{\text{Im}}
\newcommand{\trasp}[1]{{#1}^\mathsf{T}}	
\newcommand{\EE}{\mathbf{E}}		
\newcommand{\iMor}{\mathrm{n_-}}		
\newcommand{\iMorse}{\iota_{\scriptscriptstyle{\mathrm{Mor}}}}
\newcommand{\R}{\mathbb{R}}	
\newcommand{\C}{\mathbb{C}}		
\newcommand{\T}{\mathbb{T}}		
\newcommand{\U}{\mathbb{U}}		
\newcommand{\OO}{\mathrm{O}}		
\newcommand{\spec}{\sigma}		
\newcommand{\irel}{I}		
\newcommand{\ispec}{\iota_{\textup{spec}}}				
\newcommand{\igeo}{\iota_{\textup{geo}}}		
\newcommand{\Sp}{\mathrm{Sp}}
\newcommand{\Lagr}{\Lambda}
\newcommand{\Ddt}{\nabla_t}
\newcommand{\rie}[2]{\langle{#1}, {#2} \rangle_g}
\newcommand{\Gr}{\mathrm{Gr}}
\newcommand{\im}{\mathrm{rge}\,}
\newcommand{\ind}{\mathrm{ind}\,}
\newcommand{\ddt}{\mathrm{\dfrac{d}{dt}}}
\let\d\relax
\newcommand{\d}{\mathrm{d}}				
\newcommand{\norm}[1]{\left\| #1 \right\|}			
\newcommand{\abs}[1]{\left\lvert #1 \right\rvert}				
\newcommand{\traspinv}[1]{{#1}^\mathsf{-T}}				%
\newcommand{\coiMor}{\coindex}
\renewcommand{\L}{L}	
\newcommand{\M}{\mathcal{M}}	%
\newcommand{\N}{\mathbb{N}}		
\newcommand{\iCLM}{\iota_{\scriptscriptstyle{\mathrm{CLM}}}}
\newcommand{\iomega}[1]{\iota_{#1}}
\newcommand{\Z}{\mathbb{Z}}		
\newcommand{\coindex}{\mathrm{n_+}}
\newcommand{\iiindex}{\mathrm{n_-}}
\newcommand{\noo}[1]{\overset {\mbox{%
\lower1pt\hbox{${\scriptscriptstyle o}$}}}n^{\mbox{%
\lower2pt\hbox{$\scriptscriptstyle #1$}}}}
\DeclareMathOperator{\spfl}{sf}			
\DeclareMathOperator{\sgn}{sgn}		
\DeclareMathOperator{\sign}{sign}		
\DeclareMathOperator{\rk}{rank}		
\renewcommand{\geq}{\geqslant}
\renewcommand{\tilde}{\widetilde}
\renewcommand{\=}{\coloneqq}			
\newcommand{\email}[1]{\href{mailto:#1}{\textsf{#1}}}
\newcommand{\Id}{I}
\title{Linear instability  of  periodic orbits of free period Lagrangian systems}
\author{Alessandro Portaluri, Li Wu, Ran Yang \thanks{The author is partially supported by NSFC(N.12001098) and  Doctoral research start-up fund of East China University of Technology(N.DHBK2019204)}}
\date{\today}
\date{\today}
\begin{document}
 \maketitle

\begin{abstract}
In this paper we provide a sufficient condition for the linear instability of a periodic orbit for a free period Lagrangian system on a Riemannian manifold.

 The main result establish a general criterion for the linear instability of a maybe degenerate)  periodic orbit admitting   a orbit cylinder in terms  to the parity of a suitable {\em spectral index\/} encoding the functional and symplectic property of the problem. 
 
\vskip0.2truecm
\noindent
\textbf{AMS Subject Classification: 58E10, 53C22, 53D12, 58J30.}
\vskip0.1truecm
\noindent
\textbf{Keywords:} Periodic orbits, Free period Lagrangian systems, Linear instability,  Maslov index, Spectral flow.
\end{abstract}


\section{Introduction and description of the problem}\label{sec:intro}

A celebrated result proved by Poincaré in the beginning of the last century put on evidence the relation between  the (linear and exponential) instability of an orientation preserving  closed geodesic as a critical point of the geodesic energy functional on the free loop space on a surface and the minimization property of such a critical point. The literature on this criterion is quite broad. We refer the interested reader to \cite{BJP14, BJP16, HPY19}  and references therein. 

Several years later, the authors in \cite{HS10} proved a generalization of the aforementioned result, by dropping the non-degeneracy assumption and some years later, in \cite{HPY19}, authors provided  a general criterion for detecting the linear instability of a closed geodesic on a finite dimensional semi-Riemannian manifold. 

It is quite natural to understand the role of the energy level $h$ of a periodic orbit for an autonomous Lagrangian system on the instability. Some years ago, in  his expository article \cite{Abb13}, Abobndandolo  studied the question of the existence of periodic orbits of  the free period Lagrangian systems by putting on evidence the role provided  by the Ma\~{n}é critical values. Recently, in \cite{Ure19}, Ure\~{n}a investigated the instability of closed orbits obtained by
minimization for autonomous Lagrangian systems by combining  the  classical principle of Jacobi-Maupertuis with the argument of reduction of order suggested by Carathéodory of reduction of the order. In this way, under some suitable conditions, the dynamics of a free period  Lagrangian system can be represented as the dynamics of a non-autonomous and fixed period Lagrangian system lowering by 1 the degrees of  freedom. 

From a tecnical viewpoint a key step for proving our main result is based on a precise formula relating the Morse indices of a periodic orbit as critical point of the free and fixed time Lagrangian action functional. In \cite{MP10},  authors actually  established such a relation under a non-degenerate assumption of the orbit cylider.

As already observed in   this paper, we generalize the aforementioned results provided in \cite{MP10} by  dropping the non-degeneracy assumption of the orbit cylider by using some new techinques  constructed by the second autho in \cite{Wu16}. Finally, we establish the role provided by the Legendre convexity and Legendre  concavity on the linear stability of the periodic orbit. 

A crucial intermediate step for proving the main result of this paper is based on a spectral flow formula relating  the spectral indices  to a symplectic invariant known in literature as Maslov index (which is an intersection invariant constructed in the Lagrangian Grassmannian manifold of a symplectic space) that plays a crucial role in detecting the stability properties of a periodic orbit.  (For an index formula, we refer the interested reader to  \cite{APS08, MPP05, MPP07, HS09, Por08, RS95} and references therein). Very recently, new spectral flow formulas has been established and applied in the study, for detecting bifurcation of heteroclinic and homoclinic orbits of Hamiltonian systems or bifurcation of semi-Riemannian geodesics. (Cfr. \cite{PW16,MPW17, BHPT19, HP17, HP19, HPY17}).


\subsection{Description of the problem and main result}

Let $(M,\rie{\cdot}{\cdot})$ be a smooth $n$-dimensional Riemannian manifold without boundary, which represents the configuration space of a Lagrangian dynamical system.  Elements  in the tangent bundle $TM$ will be  denoted by $(q,v)$, with $q \in M$ and  $v \in T_qM$.  Let  $L: TM\to \R$  be a smooth autonomous (Lagrangian) function  satisfying the following assumptions
\begin{itemize}
	\item[{\bf(N1)\/}] $L$ is  non-degenerate on the fibers of $TM$, that is, for every $(q,v)\in TM$ we have that  $d_{vv}L(q,v)\neq 0$ is non-degenerate as a quadratic form, where $d_{vv}L$ denotes the fiberwise second differential of $L$;
	\item[{\bf(N2)\/}] $L$ is {\em exactly quadratic\/}  in the velocities meaning that the function $ L(q,v)$ is a polynomial of degree at most $2$ with respect to $v$.
\end{itemize}
On the cartesian product $\Lambda^1(M)\times \R^+$ where $\Lambda^1(M)$ denotes the Hilbert manifold of $1$-periodic loops on $M$ having Sobolev regularity $H^1$, we define  the free period Lagrangian action functional given by 
\begin{equation}\label{eq:free period functional-intro}
\mathbb E_h(x,T)\= T\int_0^1 \Big(L\big(x(s),x'(s)/T\big) +h\Big)\, d s,
\end{equation}
where $h$ is a real constant playing the role of energy. 
\begin{defn}
Let $(x,T)$ be a critical point of the Lagrangian action given in \eqref{eq:free period functional-intro}. We term $(x,T)$ {\em non-null\/} if   
\[
\langle d_{vv} L(q,v)x'(t),x'(t) \rangle_g \neq 0 \quad \textrm{ for every } t\in [0,1].
\]
 Moreover, $(x,T)$ is termed 
 \begin{itemize}
 \item  {\em $L$-Positive\/} if $\langle d_{vv} L(q,v)x'(t),x'(t) \rangle_g > 0$;
 \item  {\em $L$-Negative\/} if $\langle d_{vv} L(q,v)x'(t),x'(t) \rangle_g < 0$.
 \end{itemize}
\end{defn}
We recall the classical definition of {\em Legendre convexity\/}
\begin{itemize}
	\item[{\bf(L1)\/}] $L$ is  $\mathscr C^2$ strictly convex  on the fibers of $TM$, that is, for every $(q,v)\in TM$ we get   $d_{vv}L(q,v)> 0$ as a quadratic form.
\end{itemize}
We observe that the $L$-positiveness (resp. negativeness) corresponds to a sort of Legendre convexity (resp. concavity) condition only along the selected orbit. 
Following authors in\cite[Definition 1.2]{MP10} we introduce the  notion of orbit cylinder. 
\begin{defn}\label{def:cylinder}
	A critical point $(x,T)$ of $\eh$  admit an {\em orbit cylinder\/} if there exist $\epsilon>0$ and a smooth (in $s$) family critical points $\{(x_{h+s},T_{h+s}), s\in(-\epsilon,\epsilon)\}$ of $\mathbb{E}_{h+s}$ such that $(x_h,T_h)=(x,T)$. Moreover, this orbit cylinder is called non-degenerate if $T'(h)\neq0$.
\end{defn}
Under the above notation and definitions we are in position to state 
the  main result  of the paper. 

\begin{thm}\label{thm:main-instability}
	Let $(x,T)$ be a non-null periodic orbit of free period Lagrangian action \eqref{eq:free period functional-intro} admitting an  orbit cylinder. If one of the following four statements holds:
	\begin{itemize}
		\item[] { (1)} $x$ is $\bf{L-Positive}$ and  
		\begin{itemize}
			\item[]{\bf (OR)} $x$   is  orientation preserving and $\ispec(x,T) +n$   is even
			\item[]{\bf (NOR)} $x$   is non orientation preserving and $\ispec(x,T) +n$  is odd
		\end{itemize}
		\item[] { (2)} $x$ is $\bf{L-Negative}$ and
		\begin{itemize}
			\item[]{\bf (OR)} $x$   is  orientation preserving and $\ispec(x,T) +n$   is odd
			\item[]{\bf (NOR)} $x$   is non orientation preserving and $\ispec(x,T) +n$  is even
		\end{itemize}	
	\end{itemize}	
	then $x$ is linearly unstable.  
\end{thm}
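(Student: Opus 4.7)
The plan is to reduce the analysis of the linearized dynamics along $(x,T)$ to a parity count on a Morse/spectral index, and then use symplectic arguments to extract linear instability from the wrong parity.

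First, I would establish a Morse index relation between the free and fixed time Lagrangian functionals at $(x,T)$. Using the orbit cylinder $\{(x_{h+s},T_{h+s})\}_{s\in(-\epsilon,\epsilon)}$, the period variable $T$ can be trivialized by an $s$-dependent reparametrization (in the spirit of the Jacobi--Maupertuis / Carath\'eodory reduction recalled in the introduction), so that the Hessian of $\mathbb{E}_h$ splits, up to a controlled finite-dimensional correction, into the Hessian of the fixed-period action on loops plus a one-dimensional ``period'' direction. When the cylinder is non-degenerate ($T'(h)\ne 0$) this is essentially \cite{MP10}; in the possibly degenerate setting, I would invoke the spectral-flow-type techniques from \cite{Wu16} to track the behavior of the null direction along the cylinder parameter and obtain the index identity together with the correct nullity contribution.

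Second, I would identify $\ispec(x,T)$ with a Maslov-type intersection number. The linearized Euler--Lagrange equation along $(x,T)$ is a Jacobi-type Hamiltonian system whose fundamental solution is a symplectic path $\gamma\colon[0,T]\to\Sp(2n,\R)$; via a spectral flow formula of the kind in \cite{APS08, MPP05, MPP07, HPY17}, the spectral index equals (up to a boundary term built from $n$ and the boundary Lagrangians of the periodic problem) the Maslov index $\iMas(\gamma)$ of $\gamma$ against the diagonal. The $L$-positive versus $L$-negative dichotomy fixes the sign of the fiberwise quadratic form $d_{vv}L\,x'\otimes x'$ and therefore the sign of the spectral-flow contribution, producing the flip between cases (1) and (2). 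The $+n$ shift arises precisely from the difference between Morse-theoretic and symplectic normalizations for Lagrangian boundary conditions on an $n$-dimensional configuration space.

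Third, I would translate the parity of $\ispec(x,T)+n$ into a spectral statement for the monodromy $P\in\Sp(2n,\R)$ of $\gamma$. After quotienting out the two trivial Floquet directions forced by energy conservation and time translation, one obtains a reduced symplectic map $\overline P$ on $\R^{2(n-1)}$ whose elliptic stability (and hence the linear stability of $x$) is equivalent to $\overline P$ having all eigenvalues on the unit circle. A standard symplectic lemma relates the Maslov index modulo $2$ of $\gamma$ to the sign of $\determ(\overline P-I)$ and to whether the underlying loop is orientation preserving, so the four case distinctions in the theorem translate into a single statement: under our hypotheses $\determ(\overline P-I)$ carries the sign incompatible with $\overline P$ being elliptic. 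Consequently $\overline P$ must admit a real eigenvalue $\lambda\notin\{1\}$ (hyperbolic or real-negative), which yields linear instability of the Poincar\'e return map, hence of $x$.

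The main obstacle is the first step in the degenerate cylinder regime: when $T'(h)=0$ is allowed, the period direction interacts non-trivially with the Jacobi directions, and the finite-dimensional correction in the Morse index comparison is no longer transparent. Controlling it requires a careful bookkeeping of the spectral flow of a one-parameter family of self-adjoint Fredholm quadratic forms along the cylinder, isolating exactly the contribution of the extra zero eigenvalue. A secondary difficulty is sign management in Step 2: one has to propagate the Legendre convexity/concavity sign, the orientation bit of $x$, and the dimensional shift $+n$ coherently through the passage Morse index $\leadsto$ spectral flow $\leadsto$ Maslov index so that the four sub-cases of the statement emerge with the parities as written.
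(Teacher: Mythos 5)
Your scaffolding is the same as the paper's: compare the free- and fixed-period spectral indices, convert the fixed-period index into a Maslov-type index of the linearized monodromy, split off the two trivial Floquet directions coming from the orbit cylinder, and conclude with a parity-implies-instability lemma for the reduced symplectic path (Lemma~\ref{lem:instability by maslov index}). The genuine gap sits exactly where you place your ``main obstacle'': you never produce the two quantitative ingredients that make the parity statement come out uniformly in $T'(h)$, and without them the argument does not close. In the paper these are, first, the difference formula \eqref{eq:difference-spfl} for $\ispec(x,T)-\ispec^T(x)$, which equals $0$, $1$ or $2$ according to the signs of $\kappa$ and $T'(h)$ and is proved through the abstract spectral-flow Lemmas~\ref{lem:C0spfl}--\ref{lem:abstract-As0-spfl} together with the orbit-cylinder analysis of Lemma~\ref{lem:nondeg-cylinder} (the degenerate case $T'(h)=0$ being grouped with $T'(h)>0$); and second, the splitting \eqref{eq:poincare-splitting}--\eqref{eq:difference-maslov} of the monodromy path, whose $2\times 2$ block generated by $-tT'(h)$ contributes $1$ to $\igeo(x)$ precisely when $T'(h)<0$ and $0$ otherwise. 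The theorem holds for possibly degenerate cylinders only because these two $T'(h)$-dependent corrections cancel modulo $2$: their sum is $1$ in the $L$-positive case and $2$ in the $L$-negative case, for every sign of $T'(h)$, which is what lets the parity of $\ispec(x,T)$ reach the reduced path $\gamma_2$. Your plan neither identifies nor proves this cancellation; merely ``isolating the contribution of the extra zero eigenvalue'' is not enough, since even in the nondegenerate regime the individual corrections jump with the sign of $T'(h)$ and only their sum is stable.

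A smaller but concrete inaccuracy concerns the shift $+n$: it is not a normalization discrepancy between Morse-theoretic and symplectic conventions. The index theorem actually used is $\igeo(x)=\ispec^T(x)+\dim\ker(A-I)$, Equation~\eqref{eq:relation-geo-spec}, where $A$ is the orthogonal holonomy of the parallel frame along $x$; the integer $n$ enters only through the elementary parity fact that $n-\dim\ker(A-\Id)$ is even exactly when $\det A=1$, i.e.\ when $x$ is orientation preserving, and odd otherwise. This is where the (OR)/(NOR) dichotomy is consumed, not in the symplectic lemma of your third step; that lemma (in the form of Lemma~\ref{lem:instability by maslov index}, applied contrapositively: linear stability forces $\iCLM(\Delta,\Gr(\gamma_2(t)))$ to be even) only supplies the final parity of the reduced Maslov index. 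If you rewrite your Step 2 so that the boundary correction is $\dim\ker(A-I)$ and the orientation hypothesis is used to control $n-\dim\ker(A-I)$ modulo $2$, and then add the cancellation computation described above, your outline becomes the paper's proof.
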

We observe that in the $L$-positive case, the spectral index actually reduces to the Morse index whilst in the $L$-negative  case, it  reduces to the Morse co-index. A straightforward consequence is the provided by the following result. 
\begin{maincor}\label{cor:instability for P positive}
	Let us assume that $(x,T)$ is a periodic orbit of free period Lagrangian action  \eqref{eq:free period functional-intro} admitting an  orbit cylinder  and we  assume that (L1) holds. If one of the following two alternatives holds
	\begin{itemize}
		\item[]{\bf (OR)} $x$   is  orientation preserving and $\iMorse(x,T) +n$   is even
		\item[]{\bf (NOR)} $x$   is non orientation preserving and $\iMorse(x,T) +n$  is odd
	\end{itemize}
	then $x$ is linearly unstable.
\end{maincor}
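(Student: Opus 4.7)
The plan is to deduce the corollary directly from Theorem \ref{thm:main-instability} by reducing the hypotheses of case (1) to the single convexity assumption (L1). The point is that (L1) is a global pointwise positivity condition on $d_{vv}L$, and this instantly upgrades to the pointwise positivity along any fixed curve required by the $L$-Positive condition.

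First I would check that $(x,T)$ automatically satisfies the non-null and $L$-Positive hypothesis of Theorem \ref{thm:main-instability}(1). Since (L1) states $d_{vv}L(q,v)>0$ as a quadratic form for every $(q,v)\in TM$, in particular the evaluation
\[
\rie{d_{vv}L\bigl(x(t),x'(t)/T\bigr)\,x'(t)}{x'(t)}>0 \quad \text{for every } t\in[0,1]
\]
holds whenever $x'(t)\neq 0$. The orbit cylinder together with the fact that $x$ is a genuine periodic orbit rules out the constant loop (otherwise $T$ would be unconstrained and the cylinder condition becomes vacuous), so $x'$ is nowhere zero by the autonomous ODE it satisfies. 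Hence $(x,T)$ is non-null and $L$-Positive.

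Next I would invoke the remark, already recorded just before the statement of the corollary, that in the $L$-positive regime the spectral index $\ispec(x,T)$ coincides with the Morse index $\iMorse(x,T)$. Granting this identification (which is presumably proved earlier in the paper by the spectral flow formula alluded to in the Introduction, identifying the crossing contributions of the spectral family with the negative eigenspace of the Hessian of the fixed period action), the parity condition $\iMorse(x,T)+n$ even (resp.\ odd) translates verbatim into $\ispec(x,T)+n$ even (resp.\ odd).

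Finally I would simply read off the conclusion: under (OR), hypothesis (1)(OR) of Theorem \ref{thm:main-instability} holds, and under (NOR), hypothesis (1)(NOR) holds; in either case the theorem yields the linear instability of $x$. There is no genuine obstacle here: the argument is a bookkeeping reduction. The only point that demands a brief justification is the equality $\ispec=\iMorse$ under $L$-positivity, but this is precisely the content of the $L$-positive specialization highlighted in the paragraph preceding the corollary, and should be cited rather than reproved.
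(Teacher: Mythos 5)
Your overall reduction is exactly the paper's intended route: under (L1), Proposition \ref{thm:spectral-index-morse-index} identifies $\ispec(x,T)$ with $\iMorse(x,T)$, so the parity hypotheses of the corollary become those of case (1) of Theorem \ref{thm:main-instability}, which then gives linear instability; the paper records the corollary precisely as this straightforward specialization, and your choice to cite the identification rather than reprove it is appropriate.

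The one step that does not hold as you state it is the claim that $(x,T)$ is automatically non-null, i.e.\ (under (L1)) that $x'(t)\neq 0$ for every $t$. Non-constancy of a periodic solution of an autonomous Euler--Lagrange equation does not prevent the velocity from vanishing at isolated instants: uniqueness applied to the initial datum $(x(t_0),0)$ forces constancy only when this datum is an equilibrium, i.e.\ $\partial_q L(x(t_0),0)=0$. Brake (libration) orbits of a natural Lagrangian $L(q,v)=\tfrac12\lvert v\rvert_g^2-V(q)$ --- which satisfies (N1), (N2) and (L1) --- are non-constant periodic orbits whose velocity vanishes at the two turning instants, and they typically occur in one-parameter families parametrized by the energy, hence admit orbit cylinders; at a turning instant $\kappa(t_0)=\langle \bar P(t_0)x'(t_0),x'(t_0)\rangle_g=0$, so such an orbit is neither non-null nor $L$-positive and Theorem \ref{thm:main-instability} simply does not apply to it. So non-nullness cannot be deduced from the orbit-cylinder hypothesis; it must be assumed (the corollary should be read as inheriting the non-null hypothesis of Theorem \ref{thm:main-instability}, equivalently, under (L1), that $x'$ is nowhere vanishing --- a point the paper itself leaves implicit). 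Once that hypothesis is in place, (L1) indeed gives $\kappa(t)>0$, the orbit is $L$-positive, and the remainder of your argument coincides with the paper's proof.
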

The paper is organized as follows

\tableofcontents

\subsection*{Notation}
At last, for the sake of the reader, let us introduce some common notation that we shall use henceforth throughout the paper. 
\begin{itemize}
	\item $(M,\rie{\cdot}{\cdot})$ denotes a Riemannian manifold without boundary; $TM$ its  tangent bundle and  $T^*M$ its cotangent bundle.
	\item $\Lambda^1(M)$ is the Hilbert manifold of loops on manifold $M$ having Sobolev regularity $H^1$
	\item $\omega$ denotes the symplectic structure $J$ the standard  symplectic matrix such that $\omega(\cdot, \cdot)=\langle J \cdot, \cdot \rangle$ where $\langle \cdot, \cdot \rangle$
	denotes the standard Euclidean product  
	\item $\iMorse(x)$ stands for the Morse index of $x$, $\ispec(x)$ for the spectral index of $x$, $\igeo(x)$: for the geometrical index index of $x$, $\iomega{1}$ denotes the Maslov-type index or Conley-Zehnder index of a symplectic matrix path, $\iCLM$ denotes the Maslov (intersection) index and finally $\spfl$ denotes the  spectral flow
	\item $P(L)$ denotes the set of $T$-periodic solutions of the Euler-Lagrange Equation ; $P(H)$: the set of $T$-periodic solutions of the Hamiltonian Equation
	\item $\delta_{ij}$ is the Kronecker symbol.  $\Id_X$ or just $\Id$ will denote the identity operator on a space $X$ and we set for simplicity  $\Id_k := \Id_{\R^k}$ for $k \in \N$.  $\Gr(\cdot)$ denotes the  graph of  its argument; $\Delta$ denotes the graph of identity matrix $\Id$
	\item $\U$ is  the unit circle of the complex plane
	\item $\OO(n)$ denotes the orthogonal group $\Sp(2n,\R)$ or just $\Sp(2n)$ denotes the $2n\times 2n$ real symplectic group
	\item $\mathfrak P$: linearized Poincar\'{e} map
	\item $\mathcal{BF}^{sa}$: the set of all bounded	selfadjoint Fredholm operators,  $\sigma(\cdot )$ denotes the spectrum of the operator in its argument
	\item We denote throughout by the symbol $\trasp{\cdot}$ (resp. $\traspinv{\cdot}$) the transpose (resp. inverse transpose) of the operator $\cdot$. Moreover $\im(\cdot), \ker(\cdot)$ and $\rk(\cdot)$ denote respectively the image, the kernel and the rank of the argument
	\item $\Gamma$ denotes the crossing form and  $\coiMor$/$\iMor$ denote respectively the dimensions  of the positive/negative  spectral  spaces and finally $\sgn(\cdot)$ is  the  signature of the quadratic form (or matrix) in its argument and it is  given by $\sgn(\cdot)=\coiMor(\cdot)-\iMor(\cdot)$ 
\end{itemize}

\subsection*{Acknowledgements}
The third named author wishes to thank Prof. Xijun Hu for providing excellent working conditions during his stay in Shandong University and some useful discussions in orbit cylinder.


\section{Lagrangian dynamics and Variational framework}\label{sec:variational-framework}

In this preliminary section we fix our basic notation and we explicitly provided the computation of the first and second variation of the free period Lagrangian action functional.


\subsection{Free-period Lagrangian action }

Let $(M,\rie{\cdot}{\cdot})$ be a (not necessarily compact or connected) smooth $n$-dimensional Riemannian manifold without boundary, which represents the configuration space of a Lagrangian dynamical system and we denote by $\norm{\cdot}$ the Riemannian norm.   Elements  in the tangent bundle $TM$  will be  denoted by $(q,v)$, with $q \in M$ and  $v \in T_qM$. The metric $\rie{\cdot}{\cdot}$  induces a metric on $TM$, Levi-Civita connections both on $M$ and $TM$ as well as the isomorphisms 
\begin{equation}
T_{(q,v)}TM= T_{(q,v)}^hTM\oplus T_{(q,v)}^vTM\cong T_qM \oplus T_qM,
\end{equation}
where $T^v_{(q,v)}TM= \ker D\tau(q,v)$ and  $\tau: TM \to M$ denotes the canonical projection. 
\begin{note}
	We shall denote by  $\Ddt$ the covariant derivative  of a vector field along a smooth curve $x$ with respect to the metric $\rie{\cdot}{\cdot}$. $\partial_q$ (resp. $\partial_v$) denotes the partial derivative along the horizontal part  (resp.  vertical part) given by the Levi-Civita connection in the splitting of $TTM$ and We shall denote by $\partial_{vv}, \partial_{qv}, \partial_{qq}$ the  components of the Hessian in the splitting of $TTM$. 
\end{note}
%
%
Given a positive number $T\in \R$ let us denote by $\T$ the one-dimensional torus $\T=\R/T\Z$. Let $\Lambda^1_T(M)$ be the  Hilbert manifold of all loops $y: \T \to M$ having  Sobolev class $H^1$. Setting  $x(t)=y(tT), t\in[0,1]$ we get that the closed curve $y$ will be identified with the pair $(x,T)$. The action of $y$ on time interval $[0,T]$ is given by 
\begin{equation}\label{eq:original functional}
 \int_0^T [L\big(y(s),y'(s)\big) +h]\, d s= T\int_0^1 [L\big(x(t),x'(t)/T\big) +h]\, d t,
\end{equation}
where  $h\in \R$ is a parameter. For convenience, we denote 
\begin{equation}\label{eq:free period functional}
\mathbb E_h(x,T)\= T\int_0^1 [L\big(x(t),x'(t)/T\big) +h]\, d t.
\end{equation}
In short-hand notation, in what follows we use $\Lambda^1(M)$ instead of  $\Lambda^1_1(M)$. In this way, we define a one-to-one correspondente between $\bigcup_{T>0}\Lambda^1_T(M) $ and $\Lambda^1(M)\times \R^+$ which preserves the action values. Bearing in mind such a correspondente,  the free period action functional \eqref{eq:free period functional}  is defined on the manifold $\Lambda^1(M)\times \R^+$.

It is well-known that the tangent space  $T_x\Lambda^1(M)$  can be identified in a natural way with the Hilbert space of $1$-periodic $H^1$ (tangent) vector fields along $x$, i.e. 
\begin{equation}\label{eq:hilbert-space}
\mathcal H(x)=\Set{\xi \in H^1(\R/\Z, TM)| \tau \circ \xi=x}.
\end{equation}
It is worth noticing that  under the  (N2) assumption, the Lagrangian functional  is of regularity class $\mathscr C^2$ (actually it is smooth). Let $\langle\langle \cdot, \cdot \rangle \rangle_1$ denote the Riemannian metric on $\Lambda^1(M)$ defined by 
\begin{equation}\label{eq:productonloopspace}
\langle\langle \xi, \eta\rangle \rangle_1\= \int_0^1	 \big[\rie{\Ddt\xi}{\Ddt \eta} +\rie{\xi}{\eta}\big]\, dt,  \qquad  \forall\, \xi, \eta \in\mathcal H(x).
\end{equation}

For $(\xi,b)\in \mathcal H(x)\times \R$, the first variation of $\mathbb E_h$ at $(x,T)\in\Lambda^1(M)\times \R^+$ is given by
\begin{multline}\label{eq:first-variation-free-1}
d\mathbb E_h(x,T)[(\xi,0)]= T\int_0^1\big[d_q L\big(x(t), x'(t)/T\big)[\xi] + d_v L\big(x(t), x'(t)/T\big)[\Ddt\xi/T]\big]\, dt	\\= 
\int_0^1 \Big[\langle T\partial_q L\big(x(t), x'(t)/T\big), \xi \rangle_g  +\big\langle  \partial_v L\big(x(t), x'(t)/T\big),\Ddt\xi\big\rangle_g \Big]\, dt\\= \int_0^1 \left[\left\langle T\partial_q L\big(x(t), x'(t)/T\big)- \dfrac{D}{dt}\partial_v L\big(x(t), x'(t)/T\big),\xi\right \rangle_g \right]\, dt\\+ 
\langle  \partial_v L\big(x(t), x'(t)/T\big),\xi\rangle_g\Big\vert_{0}^1.
\end{multline} 
For $(0,b)\in \mathcal H(x)\times \R$, the first variation of $\mathbb E_h$ at $(x,T)\in\Lambda^1(M)\times \R^+$ is given by  
\begin{multline}\label{eq:first-variation-free-2}
d\mathbb E_h(x,T)[(0,b)]=\int_0^1\Big[(L(x(t),x'(t)/T)+h-d_vL(x(t),x'(t)/T)[x'(t)/T]) \cdot b \Big]dt\\=\int_0^1\Big[(L(x(t),x'(t)/T)+h-\langle\partial_vL(x(t),x'(t)/T),x'(t)/T\rangle_g) \cdot b \Big] dt.
\end{multline} 
By equations \eqref{eq:first-variation-free-1} and  \eqref{eq:first-variation-free-2},  up  to standard regularity arguments we get that critical points $(x,T)$  of $\mathbb E_h$ are $1$-periodic solutions of corresponding Euler-Lagrange equation having energy $h$. So,  $(x,T)$ satisfies the following equations:
\[\label{eq:E-L equation-free}
\begin{cases}
\dfrac{D}{dt} \Big(\partial_v L\big(x(t), x'(t)/T\big)\Big)	= T\partial_q L\big(x(t), x'(t)/T\big), \qquad t \in (0,1)\\[12pt]
L(x(t),x'(t)/T)+h-\langle\partial_vL(x(t),x'(t)/T),x'(t)/T\rangle_g=0
\end{cases}
\]
Now, being $\mathbb E_h$ smooth it follows  that the first variation  $d\mathbb E_h(x,T)$  at $(x,T)$ coincides with the Fréchét differential $D\mathbb E_h(x,T)$ and  the second variation of $\mathbb E_h$ at $(x,T)$ coincides with  $D^2 \mathbb E_h(x,T)$. By  a straightforward computation, we get 
\begin{multline}\label{eq:second-variation-free-1}
d^2 \mathbb E_h(x,T)[(\xi,0),(\eta,0)]= \int_0^1 \Big\{\frac 1 Td_{vv} L\big(x(t), x'(t)/T\big)[\Ddt\xi, \Ddt\eta]+ d_{qv} L\big(x(t), x'(t)/T\big)[\xi, \Ddt\eta]\\+ d_{vq} L\big(x(t), x'(t)/T\big)[\Ddt\xi, \eta]+ Td_{qq} L\big(x(t), x'(t)/T\big)[\xi, \eta]\Big\} dt\\=
\int_0^1 \big\langle\frac 1 T\partial_{vv} L\big(x(t), x'(t)/T\big)\Ddt\xi, \Ddt\eta\big\rangle_g+ \langle \partial_{qv} L\big(x(t), x'(t)/T\big),\xi, \Ddt\eta\rangle_g\\+ \langle \partial_{vq} L\big(x(t), x'(t)/T\big)\Ddt\xi, \eta\rangle_g+ \langle T\partial_{qq} L\big(x(t), x'(t)/T\big)\xi, \eta\rangle_g dt.
\end{multline}
\begin{equation}\label{eq:second-variation-free-2}
d^2 \mathbb E_h(x,T)[(0,b),(0,d)]=\int_0^1\frac{1}{T^3}\langle \partial_{vv} L\big(x(t), x'(t)/T\big)x'(t), x'(t)\rangle_g\cdot bd\ dt.
\end{equation}
\begin{multline}\label{eq:second-variation-free-3}
d^2 \mathbb E_h(x,T)[(\xi,0),(0,d)]=  \int_0^1\Big\{- \frac{1}{T^2}d_{vv} L\big(x(t), x'(t)/T\big)[x'(t), \Ddt\xi]\cdot d \\+(d_qL(x(t),x'(t)/T)-\frac 1 Td_{qv}L(x(t),x'(t)/T)x'(t))[\xi]\cdot d\Big\}\ dt\\=
\int_0^1\Big\{- \frac{1}{T^2}\langle \partial_{vv} L\big(x(t), x'(t)/T\big)x'(t), \Ddt\xi\rangle_g\cdot d \\+\langle \partial_qL(x(t),x'(t)/T)-\frac 1 T\partial_{qv}L(x(t),x'(t)/T)x'(t),\xi\rangle_g\cdot d\Big\} dt.
\end{multline}
\begin{multline}\label{eq:second-variation-free-4}
d^2 \mathbb E_h(x,T)[(0,b),(\eta,0)]=\int_0^1 \Big\{-\frac{1}{T^2}d_{vv} L\big(x(t), x'(t)/T\big)[x'(t), \Ddt\eta]\cdot b\\  +(d_qL(x(t),x'(t)/T)-\frac 1 Td_{vq}L(x(t),x'(t)/T)x'(t))[\eta]\cdot b\Big\} dt\\=
\int_0^1\Big\{  -\frac{1}{T^2}\langle \partial_{vv} L\big(x(t), x'(t)/T\big)x'(t), \Ddt\eta\rangle_g\cdot b \\+\langle \partial_qL(x(t),x'(t)/T)-\frac 1 T\partial_{vq}L(x(t),x'(t)/T)x'(t),\eta\rangle_g\cdot b \Big\}dt.
\end{multline}
Summing up, we get 
\begin{multline}\label{eq:second-variation-free}
d^2 \mathbb E_h(x,T)[(\xi,b),(\eta,d)]=
\int_0^1 \big\langle\frac 1 T\partial_{vv} L\big(x(t), x'(t)/T\big)\Ddt\xi, \Ddt\eta\big\rangle_g+ \langle \partial_{qv} L\big(x(t), x'(t)/T\big)\xi, \Ddt\eta\rangle_g\\+ \langle \partial_{vq} L\big(x(t), x'(t)/T\big)\Ddt\xi, \eta\rangle_g+ \langle T\partial_{qq} L\big(x(t), x'(t)/T\big)\xi, \eta\rangle_g dt\\+\int_0^1\big\{  -\frac{1}{T^2}\langle \partial_{vv} L\big(x(t), x'(t)/T\big)x'(t), \Ddt\eta\rangle_g\cdot b- \frac{1}{T^2}\langle \partial_{vv} L\big(x(t), x'(t)/T\big)x'(t), \Ddt\xi\rangle_g\cdot d \\+\langle \partial_qL(x(t),x'(t)/T)-\frac 1 T\partial_{qv}L(x(t),x'(t)/T)x'(t),\xi\rangle_g\cdot d \\+\langle \partial_qL(x(t),x'(t)/T)-\frac 1 T\partial_{vq}L(x(t),x'(t)/T)x'(t),\eta\rangle_g\cdot b\\+\frac{1}{T^3}\langle \partial_{vv} L\big(x(t), x'(t)/T\big)x'(t), x'(t)\rangle_g\cdot bd\big\}dt.
\end{multline}
We set
\begin{multline}
\bar P(t)\= \partial_{vv} L\big(x(t), x'(t)/T\big),  \qquad \bar Q(t)\= \partial_{qv} L\big(x(t), x'(t)/T\big), \qquad \trasp{\bar{Q}}(t)\= \partial_{vq} L\big(x(t), x'(t)/T\big)\\
\bar R(t)\= \partial_{qq} L\big(x(t), x'(t)/T\big),  \qquad \partial_qL(t)=\partial_qL(x(t),x'(t)/T),\qquad \kappa(t)=\langle \bar P(t)x'(t),x'(t)\rangle_g
\end{multline}
Under the above notation and integrating by  parts in the second variation, we get  
\begin{multline}\label{eq:index-form-on-manifold-free} 
d^2 \mathbb E_h[(\xi,b),(\eta,d)]=\int_0^1 \big\langle\frac 1 T\bar P(t)\Ddt\xi, \Ddt\eta\big\rangle_g+ \langle \bar Q(t)\xi, \Ddt\eta\rangle_g+ \langle \trasp{\bar{Q}}(t)\Ddt\xi, \eta\rangle_g+ \langle T\bar R(t)\xi, \eta\rangle_g dt\\+\int_0^1\big\{  -\frac{1}{T^2}\langle\bar P(t)x'(t), \Ddt\eta\rangle_g\cdot b- \frac{1}{T^2}\langle \bar P(t)x'(t), \Ddt\xi\rangle_g\cdot d +\langle \partial_qL(t)-\frac 1 T\bar Q(t)x'(t),\xi\rangle_g\cdot d \\+\langle \partial_qL(t)-\frac 1 T\trasp{\bar{Q}}(t)x'(t),\eta\rangle_g\cdot b+\frac{1}{T^3}\kappa(t)\cdot bd\big\}dt
\\=\int_0^1\langle -\dfrac{D}{dt}\big[\frac 1 T\bar{P}(t)\nabla_t \xi+ \bar{Q}(t)\xi\big]+ \trasp{\bar{Q}}(t)\Ddt\xi+T\bar{R}(t)\xi,\eta\rangle_g dt+\big[\langle \frac 1 T\bar{P}(t)\Ddt \xi+ \bar{Q}(t)\xi,\eta\rangle_g\big]^1_{t=0}\\
+\int_0^1\big\{  -\frac{1}{T^2}\langle\bar P(t)x'(t), \Ddt\eta\rangle_g\cdot b- \frac{1}{T^2}\langle \bar P(t)x'(t), \Ddt\xi\rangle_g\cdot d +\langle \partial_qL(t)-\frac 1 T\bar Q(t)x'(t),\xi\rangle_g\cdot d \\+\langle \partial_qL(t)-\frac 1 T\trasp{\bar{Q}}(t)x'(t),\eta\rangle_g\cdot b+\frac{1}{T^3}\kappa(t)\cdot bd\big\}dt.
\end{multline}
For a given critical point $(x,T)$ of $E_h$, let us  consider the following fixed period  Lagrangian action functional:
\begin{equation}\label{eq:fixed period functional}
\eht(x)\= T\int_0^1 [L\big(x(t),x'(t)/T\big) +h]\, dt,
\end{equation}
where $x\in \Lambda^1(M)$. Actually,  $\eht$ is the restriction of $\eh$ to the submanifold $\Lambda^1(M)\times\{T\}$. By similar calculations, the first variation of $\eht$ at $x\in\Lambda^1(M)$ is given by 
\begin{multline}\label{eq:first-variation-fixed}
d\eht(x)[\xi]= T\int_0^1\big[d_q L\big(x(t), x'(t)/T\big)[\xi] + d_v L\big(x(t), x'(t)/T\big)[\Ddt\xi/T]\big]\, dt\\= 
\int_0^1 \Big[\langle T\partial_q L\big(x(t), x'(t)/T\big), \xi \rangle_g  +\big\langle  \partial_v L\big(x(t), x'(t)/T\big),\Ddt\xi\big\rangle_g \Big]\, dt\\= \int_0^1 \left[\left\langle T\partial_q L\big(x(t), x'(t)/T\big)- \dfrac{D}{dt}\partial_v L\big(x(t), x'(t)/T\big),\xi\right \rangle_g \right]\, dt
\\+ 
\langle \partial_v L\big(x(t), x'(t)/T\big),\xi\rangle_g\Big\vert_{0}^1.
\end{multline}
Critical points $x$  of $\eht$ are $1$-periodic solutions of corresponding Euler-Lagrange Equation:
\begin{equation}\label{eq:E-L equation-fixed}
\dfrac{D}{dt} \Big(\partial_v L\big(x(t), x'(t)/T\big)\Big)	= T\partial_q L\big(x(t), x'(t)/T\big), \qquad t \in (0,1).
\end{equation}
The second variation of $\eht$ at $x$ given by 
\begin{multline}\label{eq:second-vari-fixed}
d^2 \eht(x)[\xi,\eta]=\int_0^1 \big\langle\frac 1 T\bar P(t)\Ddt\xi, \Ddt\eta\big\rangle_g+ \langle \bar Q(t)\xi, \Ddt\eta\rangle_g+ \langle \trasp{\bar{Q}}(t)\Ddt\xi, \eta\rangle_g+ \langle T\bar R(t)\xi, \eta\rangle_g dt
\\=\int_0^1\langle -\dfrac{D}{dt}\big[\frac 1 T\bar{P}(t)\nabla_t \xi+ \bar{Q}(t)\xi\big]+ \trasp{\bar{Q}}(t)\Ddt\xi+T\bar{R}(t)\xi,\eta\rangle_g dt+\big[\langle \frac 1 T\bar{P}(t)\Ddt \xi+ \bar{Q}(t)\xi,\eta\rangle_g\big]^1_{0}.
\end{multline}
By Equation \eqref{eq:second-vari-fixed} we get   that $ \xi \in \ker d^2 \eht(x)$  if and only if $\xi$ is a $H^2$ vector field along $x$ which solves weakly (in the Sobolev sense)  the following boundary value problem
\begin{equation}\label{eq:Sturm-manifolds}
\begin{cases}
-\dfrac{D}{dt}\big(\dfrac 1 T\bar{P}(t)\nabla_t \xi+ \bar{Q}(t)\xi\big)+ \trasp{\bar{Q}}(t)\Ddt\xi+T\bar{R}(t)\xi=0, \qquad t \in (0,1)\\
\\
\xi(0)=\xi(1), \qquad \nabla_t \xi(0)= \nabla_t\xi(1).
\end{cases}	
\end{equation}
By standard bootstrap arguments, it follows that $\xi$ is also a classical (smooth) solution of Equation~\eqref{eq:Sturm-manifolds}. 
\begin{rem}
	It is easy to prove that $x$ is a critical point of fixed period Lagrangian system \eqref{eq:fixed period functional} provided that $(x,T) $ is a critical point of free period Lagrangian system.
\end{rem}
The next result answer the question about the existence of an orbit cylinder about a $T$-periodic orbit $x$. We refer the interested reader to \cite[Section 4.1, Proposition 2]{HZ11} for the proof. 
\begin{prop}\label{prop:hofer-zehnder}
	Assume a periodic solution $x(t,E^*)$ of a Hamiltonian vector field $X_H$ on $M$ having energy $E^*= H(x(t,E^*))$ and period $T^*$ has exactly two Floquet multipliers equal to
	$1$. Then there exists a unique and smooth $1$-parameter family $x(t,E)$ of periodic solutions with periods $T(E)$ close to $T^*$ and lying on the energy surfaces
	$H(x(t,E))= E$ for $|E-E^*|$ sufficiently small. Moreover, $T(E)\rightarrow T(E^*)$ as $E\rightarrow E^*$.	
\end{prop}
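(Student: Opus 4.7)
The plan is to reduce the search for nearby periodic orbits on nearby energy levels to a fixed-point problem for a Poincar\'e map, and then apply the implicit function theorem, using the ``exactly two multipliers equal to $1$'' hypothesis to ensure non-degeneracy transverse to the energy and flow directions. First, I would choose a local hypersurface $\Sigma \subset M$ of codimension $1$ through the base point $p^* := x(0, E^*)$, transverse to $X_H(p^*)$. The flow $\phi^t$ of $X_H$ then defines on a neighborhood $U$ of $p^*$ in $\Sigma$ a smooth first-return map $\Phi : U \to \Sigma$ together with a smooth return time $\tau : U \to \R$, satisfying $\Phi(p) = \phi^{\tau(p)}(p)$, $\tau(p^*) = T^*$ and $\Phi(p^*) = p^*$.

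Second, use energy conservation. Since $H \circ \phi^t = H$, each level $\Sigma_E := \Sigma \cap H^{-1}(E)$ is $\Phi$-invariant for $E$ near $E^*$, and $H|_\Sigma$ may be used as one of the coordinates on $\Sigma$. In coordinates $(n, E)$ with $n \in \Sigma_{E^*}$, the return map splits as $\Phi(n, E) = (\Phi_E(n), E)$ with $\Phi_{E^*}(p^*) = p^*$, and periodic orbits of energy $E$ near $x(\cdot,E^*)$ correspond to fixed points of $\Phi_E$ near $p^*$.

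Third, analyze $d\Phi$ at $p^*$. The Floquet multipliers of $x(\cdot, E^*)$ are the eigenvalues of the linearized flow $A := d\phi^{T^*}\big|_{T_{p^*}M}$. Because $X_H(p^*)$ is tangent to the orbit, $A X_H(p^*) = X_H(p^*)$; because $A$ is symplectic and $dH \circ A = dH$, the symplectic gradient direction $\nabla H$ (with respect to a compatible metric) contributes a second independent $1$-eigen/generalized-eigen direction. The hypothesis that the total algebraic multiplicity of the Floquet multiplier $1$ equals $2$ therefore means precisely that, after quotienting out the flow line and restricting to $T_{p^*}\Sigma_{E^*}$, the operator $d(\Phi_{E^*})_{p^*} - \Id$ has no kernel, i.e.\ it is invertible on $T_{p^*}\Sigma_{E^*}$.

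Fourth, apply the implicit function theorem to the smooth map $G(n, E) := \Phi_E(n) - n$ at $(p^*, E^*)$. Invertibility of $\partial_n G(p^*, E^*)$ provides a unique smooth branch $E \mapsto n(E)$, defined for $|E - E^*|$ small, with $\Phi_E(n(E)) = n(E)$. Setting $x(t, E) := \phi^t(n(E))$ and $T(E) := \tau(n(E))$ yields the required smooth $1$-parameter family of periodic orbits of period $T(E)$ on the level $H = E$, with $T(E) \to T^*$ as $E \to E^*$; uniqueness and smoothness follow directly from the implicit function theorem.

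The main obstacle is step three: one has to argue carefully, in a coordinate-free way, that the two ``trivial'' multipliers equal to $1$ are exactly accounted for by the flow direction of $X_H$ and the energy direction transverse to $\Sigma_{E^*}$ within $\Sigma$, so that the hypothesis forces invertibility of the reduced linear map $d(\Phi_{E^*})_{p^*} - \Id$ on $T_{p^*}\Sigma_{E^*}$. This uses the symplectic skew-adjointness of $A$ together with the identity $dH \circ A = dH$ and the transversality of $\Sigma$ to $X_H$; once this is set up correctly, the remaining steps are standard.
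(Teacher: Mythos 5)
The paper does not actually prove this proposition---it simply refers the reader to \cite[Section 4.1, Proposition 2]{HZ11}---and your Poincar\'e-section plus implicit-function-theorem argument is essentially the standard proof of that orbit-cylinder result, so your proposal is correct and follows the intended route. One cosmetic remark on your step three: the cleanest way to account for the two trivial multipliers is the factorization of the characteristic polynomial of $A=d\phi^{T^*}_{p^*}$ as $(\lambda-1)^2$ times that of the map induced on $\ker dH(p^*)/\mathrm{span}\,X_H(p^*)$ (using only $dH\circ A=dH$ and $AX_H(p^*)=X_H(p^*)$), the latter being conjugate to $d(\Phi_{E^*})_{p^*}$ on $T_{p^*}\Sigma_{E^*}$; neither symplecticity of $A$ nor a ``$\nabla H$ (generalized) eigendirection'' is needed for this count.
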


\section{Spectral index, geometrical index and Poincaré map}\label{subsec:ortho-triv}

The goal of this section is to associate at the critical point $(x,T)$ of the free period Lagrangian action \eqref{eq:free period functional} and to a critical point $x$ of the fixed period Lagrangian action \eqref{eq:fixed period functional} the  {\em spectral indices\/}, defined in terms of the spectral  flow of a suitable path of Fredholm quadratic forms. We refer the interested reader to Appendix~\ref{sec:spectral-flow} and references therein for a discussion about the spectral flow.


\subsection{Spectral index: an intrinsic (coordinate free) definition}

Given $(x,T)$ be a critical point of $\eh$,   for any  $s \in [0, +\infty)$ we let   $\mathcal I_s: (\mathcal H(x)\times \R )\times (\mathcal H(x)\times \R) \to \R$ the bilinear form defined  by 
\begin{multline}\label{eq:path-index-free}
\mathcal I_s[(\xi,b),(\eta,d)]\=d^2 \eh(x,T)[(\xi,b),(\eta,d)]+ s \alpha(x,T)[(\xi,b),(\eta,d)]\\= 
\int_0^1 \{\big\langle\frac 1 T\bar P(t)\Ddt\xi, \Ddt\eta\big\rangle_g+ \langle \bar Q(t)\xi, \Ddt\eta\rangle_g+ \langle \trasp{\bar{Q}}(t)\Ddt\xi, \eta\rangle_g+ \langle T\bar R(t)\xi, \eta\rangle_g\} dt\\+\int_0^1\big\{  -\frac{1}{T^2}\langle\bar P(t)x'(t), \Ddt\eta\rangle_g\cdot b- \frac{1}{T^2}\langle \bar P(t)x'(t), \Ddt\xi\rangle_g\cdot d +\langle \partial_qL(t)-\frac 1 T\bar Q(t)x'(t),\xi\rangle_g\cdot d \\+\langle \partial_qL(t)-\frac 1 T\trasp{\bar{Q}}(t)x'(t),\eta\rangle_g\cdot b+\frac{1}{T^3}\kappa(t)\cdot bd\big\}dt + s \alpha(x,T)[(\xi,b),(\eta,d)]\\
\textrm{ where } \alpha(x,T)[(\xi,b),(\eta,d)]\=\int_0^1 \{\langle \frac 1 T\bar P(t)\xi, \eta \rangle_g+\frac{1}{T^3}\kappa(t)bd\}\, dt.
\end{multline}
\begin{note}
	In short-hand notation and if no confusion can arise, we set $ \mathcal Q^h\= d^2 \eh(x,T)$.
\end{note}
\begin{prop}\label{thm:famiglia-Fredholm-free}
	For any $s \in [0, +\infty)$ let $\mathcal Q_s$ denote the quadratic form associated to  $\mathcal I_s$ defined in Equation~\eqref{eq:path-index-free}. Then 
	\begin{enumerate}
		\item[] $s\mapsto \mathcal Q_s$ is a smooth  path of Fredholm quadratic forms onto $\mathcal H(x)\times \R$.
		In particular $ \mathcal Q^h$ is a Fredholm quadratic form on $\mathcal H(x)\times \R$.
	\end{enumerate}
\end{prop}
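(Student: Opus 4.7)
The plan is to reduce the claim to a purely operator-theoretic statement via Riesz representation, and then to exhibit each representing operator as \emph{invertible plus compact}.

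Smoothness of $s\mapsto \mathcal Q_s$ is immediate: by construction $\mathcal I_s=d^2\eh(x,T)+s\,\alpha(x,T)$ is affine (indeed linear) in $s$, and both summands are continuous bilinear forms on the Hilbert space $\HH\=\mathcal H(x)\times\R$ (equip $\mathcal H(x)$ with the inner product \eqref{eq:productonloopspace} and $\R$ with the standard one) because all the coefficients $\bar P,\bar Q,\trasp{\bar Q},\bar R,\partial_q L,\kappa$ are bounded on $[0,1]$ by smoothness of $L$ and $x$. Hence the entire statement reduces to proving that each $\mathcal Q_s$ is a Fredholm quadratic form, equivalently that the unique bounded self-adjoint operator $\mathcal T_s\in\Bsa(\HH)$ satisfying $\mathcal Q_s(z)=\langle \mathcal T_s z,z\rangle_{\HH}$ is a Fredholm operator.

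To analyse $\mathcal T_s$ I would split $\mathcal I_s=\mathcal I^{\mathrm{pr}}+\mathcal I_s^{\mathrm{cpt}}$, where the \emph{principal part} is
\[
\mathcal I^{\mathrm{pr}}[(\xi,b),(\eta,d)]\=\int_0^1\tfrac{1}{T}\rie{\bar P(t)\Ddt\xi}{\Ddt\eta}\,dt+\int_0^1\tfrac{1}{T}\rie{\bar P(t)\xi}{\eta}\,dt+\gamma\,bd,
\]
for a fixed positive constant $\gamma$ (e.g.\ $\gamma=1$), and $\mathcal I_s^{\mathrm{cpt}}$ collects every remaining term of \eqref{eq:path-index-free}, namely the cross terms $\rie{\bar Q\xi}{\Ddt\eta}$, $\rie{\trasp{\bar Q}\Ddt\xi}{\eta}$, $\rie{T\bar R\xi}{\eta}$, all the terms coupling $b$ or $d$ with $\xi$ or $\Ddt\xi$, the leftover $bd$-coefficient, and the algebraic correction required to convert the original $s$-dependent $\rie{\frac{1}{T}\bar P\xi}{\eta}$ piece of $\mathcal I_s$ into the fixed one appearing in $\mathcal I^{\mathrm{pr}}$. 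The operator associated to $\mathcal I^{\mathrm{pr}}$ splits diagonally between $\mathcal H(x)$ and $\R$: the $\R$-block is multiplication by $\gamma$ (invertible), while the $\mathcal H(x)$-block, associated to $\xi\mapsto\int \tfrac{1}{T}\rie{\bar P\Ddt\xi}{\Ddt\cdot}+\int\tfrac{1}{T}\rie{\bar P\xi}{\cdot}$, is invertible by a G\aa{}rding-type bound: under (N1) the pointwise multiplication operator $M_{\bar P}(t):=\bar P(t)/T$ is a smooth family of self-adjoint automorphisms of $T_{x(t)}M$ with $\sup_{t}\|\bar P(t)^{-1}\|<\infty$, from which a standard argument yields $\|\xi\|_{H^1}\lesssim\|\mathcal I^{\mathrm{pr}}(\xi,\cdot)\|_{(H^1)^*}$. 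The remaining part $\mathcal I_s^{\mathrm{cpt}}$ defines a compact operator: all of its terms of type (i) involve at most one covariant derivative and hence factor through the compact Rellich embedding $\mathcal H(x)\hookrightarrow L^2([0,1];x^*TM)$, while the terms of type (ii) coupling the scalar variables $b,d$ with $\xi,\eta$ are finite rank because $\R$ is one-dimensional. Therefore $\mathcal T_s=\mathcal T_s^{\mathrm{pr}}+\mathcal K_s$ with $\mathcal T_s^{\mathrm{pr}}$ invertible and $\mathcal K_s$ compact, so $\mathcal T_s$ is Fredholm.

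The main obstacle I anticipate is the G\aa{}rding bound for the indefinite principal form, i.e.\ the step where non-degeneracy of $\bar P$ (rather than positivity, as in (L1)) is used to conclude invertibility of $\mathcal T_s^{\mathrm{pr}}$. In the $L$-positive case this is literal coercivity, but in the indefinite case one has to argue via a pointwise symmetric polar decomposition $\bar P(t)=S(t)|\bar P(t)|$ with $S(t)^2=I$ smooth and self-adjoint; the associated change of frame converts the indefinite principal form into a positive one up to bounded zero-order terms which are then absorbed into $\mathcal K_s$ using Rellich compactness. Once this estimate is in hand, invertibility of $\mathcal T_s^{\mathrm{pr}}$ on $\HH$ and the compactness of $\mathcal K_s$ together give Fredholmness, and since the decomposition and each estimate are $s$-independent in structure, the conclusion holds uniformly and yields the smooth path assertion; the second statement ($s=0$) is then the particular case $\mathcal Q^h=\mathcal Q_0$.
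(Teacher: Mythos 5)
Your overall architecture — split $\mathcal I_s$ into a principal part plus a remainder, represent on $\mathcal H(x)\times\R$, dispose of the remainder by Rellich compactness and finite rank for the scalar couplings, and get smoothness in $s$ from affinity — is exactly the standard route, and it is the one the paper itself delegates to \cite[Proposition 3.2]{PWY21}; the compactness claims and the treatment of the $bd$-block are fine. The genuine gap is the step you yourself flag: the asserted ``G\aa rding-type bound'' $\norm{\xi}_{H^1}\lesssim\norm{\mathcal I^{\mathrm{pr}}(\xi,\cdot)}_{(H^1)^*}$ is not a standard consequence of (N1). It is equivalent to the \emph{injectivity} of the self-adjoint Fredholm operator representing the principal form, i.e.\ to the absence of nontrivial solutions of $-\tfrac1T\nabla_t(\bar P\,\nabla_t\xi)+\tfrac1T\bar P\,\xi=0$ with the (twisted) periodic conditions; when $\bar P$ is merely non-degenerate and indefinite nothing excludes such a kernel, and your polar-decomposition remedy does not repair this: once the ``bounded zero-order terms'' produced by the change of frame are absorbed into $\mathcal K_s$, the quantity you end up estimating is no longer $\mathcal I^{\mathrm{pr}}$, so the concluding claim ``invertibility of $\mathcal T_s^{\mathrm{pr}}$'' does not follow from what precedes it.

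The fix is that invertibility of the principal part is never needed: Fredholmness of $\mathcal T_s$ only requires a decomposition $\mathcal T_s=(\textrm{invertible})+(\textrm{compact})$, and the natural invertible comparison operator is the block-diagonal multiplication operator $M_{\bar P/T}\oplus\gamma$ on $\mathcal H(x)\times\R$, which is bounded with bounded inverse $M_{T\bar P^{-1}}\oplus\gamma^{-1}$ precisely by (N1) and smoothness of $\bar P$. Indeed, with the inner product \eqref{eq:productonloopspace},
\begin{equation}
\int_0^1\tfrac1T\rie{\bar P\,\nabla_t\xi}{\nabla_t\eta}\,dt+\int_0^1\tfrac1T\rie{\bar P\,\xi}{\eta}\,dt
=\langle\langle M_{\bar P/T}\,\xi,\eta\rangle\rangle_1-\int_0^1\tfrac1T\rie{(\nabla_t\bar P)\,\xi}{\nabla_t\eta}\,dt,
\end{equation}
and the last form is bounded by $\norm{\xi}_{L^2}\norm{\eta}_{H^1}$, hence weakly continuous in $\xi$ and compactly represented. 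Replacing your invertibility step by this identity, the rest of your proposal goes through verbatim: the representing operator of $\mathcal I_s$ is $M_{\bar P/T}\oplus\gamma$ plus a compact operator for every $s$, so each $\mathcal Q_s$ is Fredholm, and no positivity or coercivity of $\bar P$ is used anywhere — which is the point of working under (N1) rather than (L1).
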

\begin{proof}
	Similar to discussions in \cite[Proposition 3.2]{PWY21} and be omitted.
	\end{proof}

Assume $(x,T)$ is a critical point of free period Lagrangian system \eqref{eq:free period functional}, then we can define the bilinear form  $\mathcal I_{s,T}: \mathcal H(x)\times \mathcal H(x) \to \R$ of $x$ as a critical point of system \eqref{eq:fixed period functional} in the same way as $(x,T)$. In fact, $\mathcal I_{s,T}$ is given by 
\begin{multline}\label{eq:path-index-fixed}
\mathcal I_{s,T}[\xi,\eta]\=d^2 \eht(x)[\xi,\eta]+ s \alpha_T(x)[\xi,\eta]\\= 
\int_0^1 \{\big\langle\frac 1 T\bar P(t)\Ddt\xi, \Ddt\eta\big\rangle_g+ \langle \bar Q(t),\xi, \Ddt\eta\rangle_g+ \langle \trasp{\bar{Q}}(t)\Ddt\xi, \eta\rangle_g+ \langle T\bar R(t)\xi, \eta\rangle_g\} dt + s \alpha_T(x)[\xi,\eta]\\
\textrm{ where } \alpha_T(x)[\xi,\eta]\=\int_0^1 \{\langle \frac 1 T\bar P(t)\xi,\eta \rangle_g\, dt.
\end{multline}
Set $ \mathcal Q^h_T\= d^2 \eht(x)$, similar to Proposition \ref{thm:famiglia-Fredholm-free} it can be proved $\mathcal I_{s,T}$ and $\mathcal Q^h_T$ both are Fredholm quadratic forms. For any $s \in [0, +\infty)$ let $\mathcal Q_{s,T}$  denote the quadratic form associated to  $\mathcal I_{s,T}$. 

Now we are entitled to give the following definition of spectral indices. 

\begin{defn}\label{def:spectral-index}
	Let $(x,T)$ be a non-null critical point of free period Lagrangian action \eqref{eq:free period functional}. We term {\em spectral indices  of $(x,T)$\/} and $x$ respectively the integers 
	\begin{eqnarray}\label{eq:spectral-index-manifold}
	\ispec(x,T)\= \spfl\big(\mathcal Q_s, s \in [0, s_0]\big)\quad \textrm{ and } \quad 
	\ispec^T(x)\= \spfl\big(\mathcal Q_{s,T}, s \in [0, s_0]\big).
	\end{eqnarray}
	where the (RHS) denotes the spectral flow of the path of Fredholm quadratic forms for a large enough $s_0 >0$. 
	\end{defn}
\begin{rem}
	It is worth noticing that Definition \ref{def:spectral-index} is well-posed in the sense that it doesn't depend upon $s_0$. This fact will be proved in the sequel and it is actually a direct consequence of Lemma \ref{thm:non-degenerate-s_0-forme-free}. 
\end{rem}
\begin{prop}\label{thm:spectral-index-morse-index}
	We assume that (L1) holds. 
	Then the Morse indices of $(x,T)$ and $x$ (i.e. the dimension of the maximal negative subspace of the Hessian of $\mathcal Q^h$ and $\mathcal Q^{h_T}$) are both finite and the following equalities hold
	\[
	\ispec(x)= \iMorse(x,T)\quad \textrm{ and } \quad  \ispec^T(x)= \iMorse(x)
	\]
\end{prop}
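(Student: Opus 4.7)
The plan is to establish both equalities in parallel: the free-period statement on $\mathcal H(x)\times\R$ restricts to the fixed-period statement on $\mathcal H(x)$ via the slice $\{b=d=0\}$, so it suffices to treat the first and read off the second by restriction.

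First I would verify that $\iMorse(x,T)$ is finite. Under (L1) the principal part of $\mathcal Q^h$ in~\eqref{eq:index-form-on-manifold-free}, namely $\int_0^1\langle\tfrac1T\bar P(t)\Ddt\xi,\Ddt\xi\rangle_g\,dt$ together with the quadratic contribution $\tfrac1{T^3}\kappa(t)b^2$, is coercive on $\mathcal H(x)\times\R$ with respect to the product of the $H^1$-metric~\eqref{eq:productonloopspace} and the Euclidean norm on $\R$. All remaining terms in~\eqref{eq:index-form-on-manifold-free} involve $\xi$ with at most one derivative and hence define compact quadratic forms by the Rellich embedding. Representing $\mathcal Q^h$ by a bounded selfadjoint operator $A^h$ via the Riesz isomorphism, one therefore gets $A^h=P^h+K^h$ with $P^h$ positive invertible and $K^h$ compact, so that $A^h\in\mathcal{BF}^{sa}$ and its negative eigenspace is finite-dimensional.

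Second I would show that, under (L1), the perturbation form $\alpha(x,T)$ is strictly positive on $\mathcal H(x)\times\R$. Indeed, by the formula following~\eqref{eq:path-index-free},
\[
\alpha(x,T)[(\xi,b),(\xi,b)]=\int_0^1\tfrac1T\langle\bar P(t)\xi,\xi\rangle_g\,dt+\tfrac1{T^3}\Big(\int_0^1\kappa(t)\,dt\Big)b^2,
\]
with both summands non-negative because $\bar P>0$ and $\kappa=\langle\bar Px',x'\rangle_g\geq 0$; the first vanishes only when $\xi\equiv 0$ and the coefficient $\int_0^1\kappa\,dt$ is strictly positive because $x$ is a non-constant $T$-periodic orbit. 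The operator $B^h$ representing $\alpha$ with respect to the $H^1$-inner product is then positive and compact.

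Third I would exploit the monotonicity of the path $A^h+sB^h$. Since $B^h\geq 0$, each eigenvalue of $A^h+sB^h$ is non-decreasing in $s$. Combining the coercivity of the principal part of $\mathcal Q^h$ on the $H^1$-orthogonal complement of the finite-dimensional non-positive spectral subspace of $A^h$ with the $L^2\times\R$-positivity of $\alpha$, one finds $s_0>0$ large enough that $A^h+s_0B^h$ is positive definite; this is the content of the forthcoming Lemma~\ref{thm:non-degenerate-s_0-forme-free} and it also shows that $\ispec(x,T)$ is independent of $s_0$. The conclusion then follows from the standard formula for the spectral flow of a monotone path ending at a positive operator: each negative eigenvalue of $A^h$ crosses zero exactly once and only upward, so $\spfl(\mathcal Q_s,s\in[0,s_0])$ equals the dimension of the negative eigenspace of $A^h$, which is $\iMorse(x,T)$. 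The same argument, applied verbatim with $b=d=0$, yields $\ispec^T(x)=\iMorse(x)$. The main obstacle is the third step: the positivity of $B^h$ is only an $L^2\times\R$-positivity while the Fredholm structure lives in $H^1\times\R$, so producing a uniform coercive lower bound for $A^h+s_0B^h$ requires a careful decomposition of $\mathcal H(x)\times\R$ into the finite-dimensional non-positive cone of $A^h$ and its $H^1$-orthogonal complement, together with separate quantitative estimates on each block.
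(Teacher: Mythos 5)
Your proposal is correct and follows essentially the same route as the paper: under (L1) the forms $\mathcal Q_s$ are essentially positive Fredholm forms (a weakly compact/coercive-plus-compact splitting), the endpoint $\mathcal Q_{s_0}$ is positive definite for $s_0$ large, and the spectral flow then equals $\iMorse(\mathcal Q_0)-\iMorse(\mathcal Q_{s_0})=\iMorse(\mathcal Q_0)$, exactly as the paper obtains via Proposition~\ref{thm:sf-differenza-morse}; your monotonicity phrasing of this last step is only a cosmetic variant. The only cosmetic caveat is that Lemma~\ref{thm:non-degenerate-s_0-forme-free} per se gives non-degeneracy rather than positivity of the endpoint, but your sketched coercivity/decomposition argument (implicitly using the non-null hypothesis built into Definition~\ref{def:spectral-index}) fills this in just as the paper's ``semi-positive hence positive'' remark does.
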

\begin{proof}
	We will only consider the free period case since the other is completely analogous. We start observing that if $L$ is $\mathscr C^2$-strictly convex on  $TM$, then $\mathcal Q^h$ is a  positive Fredholm quadratic form and hence $\mathcal Q_s$ is a path of essentially positive Fredholm quadratic forms (being a weakly compact perturbation of a positive definite quadratic form). In particular the  Morse index of $\mathcal Q_s$ is  finite  for every $s \in [0,+\infty)$. If $s_0$ is large enough, the form $\mathcal Q_{s_0}$ is non-degenerate and begin also semi-positive is actually positive definite. Thus its Morse index vanishes. Now, since for path of essentially positive Fredholm quadratic it holds that 
	\[
	\spfl(\mathcal Q_s, s\in [0,s_0])=\iMorse (\mathcal Q_0)- \iMorse (\mathcal Q_{s_0})= \iMorse (\mathcal Q_0)
	\]
	the conclusion readily  follows. 
\end{proof}


\subsection{Pull-back bundles and push-forward of Fredholm forms}

We denote by $\mathcal E$ the $\rie{\cdot}{\cdot}$-orthonormal and parallel frame $\EE$, pointwise given by 
\[
\mathcal E(t)=\{e_1(t), \dots, e_n(t)\} 
\]
and, if $(x,T)$ is a critical point,  we let $\bar A: T_{x(0)}M\to T_{x(1)}M\cong T_{x(0)}M$ the $\rie{\cdot}{\cdot}$-orthogonal operator defined by 
\[
\bar A e_j(0)= e_j(1).
\] 
Such a frame $\EE$, induces a trivialization of the pull-back bundle $x^*(TM)$ over $[0,1]$ through the smooth curve $x:[0,1] \to M$; namely  the  smooth one parameter family  of isomorphisms  
\begin{multline}\label{eq:parallel-frame}
[0,1] \ni t \longmapsto E_t \quad \textrm{ where } \quad E_t: \R^n \ni e_i \longmapsto e_i(t) \in  T_{x(t)}M\qquad \forall\ t \in [0,1] \textrm{ and } i =1, \ldots, n\\
\textrm{  are such that } \langle E_t e_i, E_t e_j\rangle_g=  \delta_{ij} \textrm { and } \nabla_t E_t e_i=0
\end{multline}
here $\{e_i\}_{i=1}^n$ is the canonical basis of $\R^n$ and  $\delta_{ij}$ denotes the Kronecker symbol.  

By Equation~\eqref{eq:parallel-frame} we get that the pull-back by $E_t$ of the metric $\rie{\cdot}{\cdot}$ induces  the Euclidean product on $\R^n$ and moreover this pull-back is independent on $t$, as directly follows by the orthogonality assumption on the frame $\EE$.

We set $A\=E_0^{-1}\bar A^{-1} E_1 \in \OO(n)$ and define
\begin{equation}\label{eq:Ad}
A_d\= \begin{bmatrix}
A & 0 \\ 0 & A
\end{bmatrix}.
\end{equation} 
Let us now consider the Hilbert space 
\begin{equation}\label{eq:periodic-vf}
H^1_A([0,1], \R^n)=\Set{u \in H^1([0,1], \R^n)| u(0)=A u(1)}
\end{equation}
equipped with the inner product 
\begin{equation}\label{eq:3-5pigeon}
\langle \langle v,w\rangle\rangle _A\= \int_0^1 \big[\langle v'(s), A w'(s)\rangle+ \langle v(s), A w(s)\rangle\big]\, ds.
\end{equation}
Denoting by   $\Psi: \mathcal H(x) \to H^1_A([0,1], \R^n)$  the map defined by $\Psi(\xi)=u$ where $u(t)=E_t^{-1}(\xi(t))$, it follows that $\Psi$ is a linear isomorphism and it is easy to check that 
\begin{multline}
\xi(0)=	\xi(1)  \quad \iff\quad E_0 u(0)= E_1 u(1) \quad\iff\quad u(0)= A u(1) \textrm{ and } \\
\nabla_t \xi(0)= \nabla_t \xi(1)  \quad \iff\quad u'(0)= A u'(1)
\end{multline}
where, in the last, we used property of the frame being parallel.

For $i=1, \dots, n$ and $t \in[0,1]$, we let $e_i(t)\=E_t e_i$ and we denote by $\langle P(t) \cdot, \cdot \rangle$, $\langle Q(t) \cdot, \cdot \rangle$ and $\langle R(t) \cdot, \cdot \rangle$ respectively the pull-back by $E_t$ of $\rie{\bar  P(t) \cdot}{ \cdot }$, $\rie{ \bar Q(t) \cdot}{\cdot }$ and $\rie{ \bar R(t) \cdot}{\cdot }$. Thus, we get
\begin{multline}
P(t)\=[p_{ij}(t)]_{i,j=0}^n, \quad Q(t)\=[q^{ij}(t)]_{i,j=0}^n,\quad  R(t)\=[r_{ij}(t)]_{i,j=0}^n\quad  \textrm{ where }\\
p_{ij}(t)\=\rie{\bar{P}(t)e_i(t)}{e_j(t)}, \quad q_{ij}(t)\=\rie{\bar{Q}(t)e_i(t)}{e_j(t)},  \quad  r_{ij}(t)\	=\rie{ \bar{R}(t)e_i(t)}{e_j(t)}.
\end{multline}
We observe that $P$ and $ R$ are symmetric matrices and being  $e_i(T)=\sum_{j=1}^na_{ij}e_j(0)$ we get also that 
\begin{equation}\label{eq:condition-of-P-after-trivialization}
P(0)=AP(T)\trasp{A},\qquad P'(0)=AP'(T)\trasp{A}, \qquad Q(0)=AQ(T), \qquad R(0)=AR(T)\trasp{A}.
\end{equation}
Moreover, if $\partial_qL(t)=\sum_{i=1}^{n}l_i(t)e_i(t)$ and $x'(t)=\sum_{i=1}^{n}x_i(t)e_i(t)$, then we denote
\begin{equation}
L_q(t)\=\trasp{(l_1(t),l_2(t),\cdots,l_n(t))};\qquad  x'(t)\=\trasp{(x_1(t),x_2(t),\cdots,x_n(t))}.
\end{equation}

Now, for every $s \in [0,+\infty)$, the push-forward by $\Psi$ of the index forms $\mathcal I_s$ on $\mathcal H(x)\times \R$ is given by the symmetric bilinear forms  on $H^1_A([0,1], \R^n)\times \R$ defined by 
\begin{multline}\label{eq:path-index-2-free}
I_s[(u,b),(v,d)]\\ 
=\int_0^1 \left\{\big\langle\dfrac 1 T P(t)u'(t), v'(t)\big\rangle+ \langle Q(t)u(t), v'(t)\rangle+ \langle \trasp{Q}(t)u'(t), v(t)\rangle+ \langle TR(t)u(t), v(t)\rangle\right\} dt\\+\int_0^1\left\{  -\dfrac{1}{T^2}\langle P(t)x'(t), v'(t)\rangle\cdot b- \dfrac{1}{T^2}\langle  P(t)x'(t), u'(t)\rangle\cdot d +\langle L_q(t)-\dfrac 1 T Q(t)x'(t),u(t)\rangle\cdot d\right.\\+\left.\langle L_q(t)-\frac 1 T\trasp{Q}(t)x'(t),v(t)\rangle\cdot b+\dfrac{1}{T^3}\kappa(t)\cdot bd\right\}dt + s \alpha(x)[(\xi,b),(\eta,d)]\\
\textrm{ where } \alpha(x,T)[(u,b),(v,d)]\=\int_0^1 \left \{\langle \dfrac 1 T P(t)u(t), v(t) \rangle+\dfrac{1}{T^3}\kappa(t)bd\right\}\, dt.
\end{multline}
Denoting by $q^A_s$ the quadratic form on $H^1_A([0,1], \R^n)\times \R$ associate to $I_s$ then, as direct consequence of Proposition  \ref{thm:famiglia-Fredholm-free}, we get that
for every $s \in [0,+\infty)$, the quadratic form $q_s^A$ is Fredholm on $H^1_A([0,1], \R^n)\times \R$. 
The following result is crucial in the well-posedness of the spectral index. 
\begin{lem}\label{thm:non-degenerate-s_0-forme-free}
	Under the above notation, there exists $s_0 \in [0,+\infty)$ large enough such that for every $s \geq s_0$, the form 
	$I_s$ given in Equation \eqref{eq:path-index-2-free} is non-degenerate (in the sense of bilinear forms).
\end{lem}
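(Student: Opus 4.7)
The plan is to argue by contradiction: assume there exist a sequence $s_n \to +\infty$ and pairs $(u_n, b_n) \ne 0$ in $H^1_A([0,1], \R^n) \times \R$ with $I_{s_n}[(u_n, b_n), (v, d)] = 0$ for all test pairs $(v, d)$. Integration by parts in both arguments produces the associated Euler--Lagrange system: a second-order linear ODE for $u_n$ with the twisted periodic boundary conditions $u_n(0) = A u_n(1)$ and $u_n'(0) = A u_n'(1)$, coupled to the scalar relation obtained by testing against $(0,d)$, namely
\begin{equation*}
(1+s_n)\,\tfrac{b_n}{T^3}\int_0^1 \kappa(t)\,dt \;=\; -\int_0^1 \Big[-\tfrac{1}{T^2}\langle P x', u_n'\rangle + \langle L_q - \tfrac{1}{T}Q x', u_n\rangle \Big]\, dt.
\end{equation*}
The non-null hypothesis forces $\kappa$ to have constant sign, so $\int_0^1 \kappa \, dt \neq 0$; hence $b_n$ is uniquely determined by $u_n$ and satisfies the a priori bound $|b_n| \leq C\,\|u_n\|_{H^1}/s_n$ for some constant $C$ independent of $n$.

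The next step is to substitute this expression for $b_n$ back into the ODE. After multiplication by $T$, the equation for $u_n$ takes the form
\begin{equation*}
-P(t)\, u_n''(t) + B_1(t)\, u_n'(t) + B_0(t)\, u_n(t) + s_n\, P(t)\, u_n(t) \;=\; O(1/s_n),
\end{equation*}
where $B_0, B_1$ are continuous matrix-valued coefficients independent of $s_n$ and the error term collects the residual contributions carrying the factor $b_n$. Since $P(t)$ is invertible by hypothesis \textbf{(N1)}, left-multiplication by $P(t)^{-1}$ brings the equation into the standard form $u_n''(t) = s_n\, u_n(t) + \widetilde B_1(t)\, u_n'(t) + \widetilde B_0(t)\, u_n(t) + O(1/s_n)$.

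The final step is to analyse the monodromy of this ODE in the large-$s_n$ regime. Rewritten as a first-order system for $Y_n = \trasp{(u_n, u_n')}$, the coefficient matrix has characteristic exponents $\pm \sqrt{s_n} + O(1)$, each of multiplicity $n$. In the unperturbed model $u'' = s u$ the monodromy $\Phi_0(s)$ has eigenvalues $e^{\pm\sqrt{s}}$ on two $n$-dimensional eigenspaces preserved by $A_d$, so $A_d \Phi_0(s)$ has spectrum confined to the two circles of radii $e^{\pm\sqrt{s}}$, which miss $1$ for $s$ large. A standard Liouville--Green/WKB argument (equivalently, a Gronwall estimate on the variation-of-constants formula) shows that the true monodromy $\Phi(1,0;s_n)$ is $O(1/\sqrt{s_n})$-close in operator norm to $\Phi_0(s_n)$; continuity of the spectrum then guarantees $1 \notin \sigma(A_d \Phi(1,0;s_n))$ for all sufficiently large $n$. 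The twisted periodic problem therefore admits only $u_n = 0$, and the scalar relation then forces $b_n = 0$ as well, contradicting the assumption $(u_n, b_n) \neq 0$.

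The main technical obstacle is the monodromy analysis of the last paragraph: since $P(t)$ may be indefinite and does not commute with the lower-order coefficients, after pre-multiplication by $P(t)^{-1}$ the dominant $s_n u_n$ term is accompanied by $O(1)$ non-commuting matrix terms, and the exponential dichotomy of the model equation $u'' = s u$ must be shown to persist under these bounded perturbations as well as under the $O(1/\sqrt{s_n})$ remainder. This is a classical large-parameter ODE argument, but carrying it out with uniform estimates on $t \in [0,1]$, and in a way that is robust against the twisting by $A_d \in \OO(2n)$, is where most care is needed.
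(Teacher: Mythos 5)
Your strategy---pass from degeneracy of $I_s$ to the Euler--Lagrange boundary value problem with twisted periodic conditions and exclude nontrivial solutions for large $s$ by a large-parameter monodromy analysis---is genuinely different from the paper's argument, which never touches the ODE: there one simply evaluates the degenerate pairing on the explicit test element $(v,d)=(\tfrac1T P u,\ \sign\kappa\cdot b)$ (admissible in $H^1_A$ because $P(0)=AP(1)\trasp{A}$) and checks that the resulting expression is bounded below by a strictly positive quantity once $s$ is large, an immediate contradiction. Your route is plausible, but as written it contains two genuine gaps. First, the quantitative claim that the true monodromy $\Phi(1,0;s_n)$ is $O(1/\sqrt{s_n})$-close in operator norm to the model monodromy $\Phi_0(s_n)$ cannot hold: both matrices have norm of order $e^{\sqrt{s_n}}$, and a Gronwall/variation-of-constants estimate only controls the error relative to this exponentially large factor (something like $e^{\sqrt{s_n}}\cdot O(1/\sqrt{s_n})$), so the appeal to ``continuity of the spectrum'' in the form you state is not a valid step. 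What you actually need is persistence of the exponential dichotomy (roughness) or a WKB/Levinson diagonalization giving relative error bounds, from which one deduces that the Floquet multipliers of $A_d\Phi(1,0;s_n)$ stay in moduli bands near $e^{\pm\sqrt{s_n}}$ and hence away from $1$; you flag this as the main obstacle, but the step as stated would fail.

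Second, after eliminating $b_n$ the kernel equation is not the homogeneous twisted-periodic ODE: the terms carrying $b_n$ form a nonlocal (rank-one) perturbation of size $O(1/s_n)\,\|u_n\|_{H^1}$, since $b_n$ is an integral functional of $u_n$. Knowing that $1\notin\sigma\bigl(A_d\Phi(1,0;s_n)\bigr)$ only gives invertibility of the unperturbed boundary value problem; to conclude $u_n=0$ you must in addition bound its solution (Green) operator, uniformly or at least with growth $o(s_n)$, so that the nonlocal term can be absorbed by a Neumann-series/contraction argument. This bound is not automatic from an energy estimate here, because $P$ is only invertible, possibly indefinite, under \textbf{(N1)}; it can be extracted from the dichotomy structure, but it is missing from your write-up. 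Both gaps are repairable with classical large-parameter ODE estimates, yet they are exactly the technical burden that the paper's short test-vector computation (choose $(v,d)$ as above, complete squares, let $s\to+\infty$) is designed to avoid.
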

\begin{proof}
	We argue by contradiction and we assume that for every $s_0 \ge 0$ there exists $s \geq s_0$ such that  $I_s$ is degenerate. Then there exists a $(u,b)\in H^1_A([0,1],\R^n)\times \R$ such that $I_s((u,b),(v,d))\equiv 0$ for every $(v,d)\in H^1_A([0,1],\R^n)\times \R$, namely we have
	\begin{multline}\label{eq:degenerate condition for index form free}
	0\equiv I_s((u,b),(v,d))\\
	=\int_0^1 \left\{\langle\dfrac 1 T P(t)u'(t), v'(t)\big\rangle+ \langle Q(t)u(t), v'(t)\rangle+ \langle \trasp{Q}(t)u'(t), v(t)\rangle+ \langle TR(t)u(t), v(t)\rangle\right\} dt\\+\int_0^1\left\{ -\dfrac{1}{T^2}\langle P(t)x'(t), v'(t)\rangle\cdot b- \dfrac{1}{T^2}\langle  P(t)x'(t), u'(t)\rangle\cdot d +\langle L_q(t)-\dfrac 1 T Q(t)x'(t),u(t)\rangle\cdot d \right.\\+\left.\langle L_q(t)-\dfrac 1 T\trasp{Q}(t)x'(t),v(t)\rangle\cdot b+\frac{1}{T^3}\kappa(t)\cdot bd\right\}dt\\
	+s\int_0^1 \left\{\langle \dfrac 1 T P(t)u(t), v(t) \rangle+\dfrac{1}{T^3}\kappa(t)bd\right\}\, dt.
	\end{multline}
We let $v(t)\=\frac 1 TP(t)u(t)$ and  we observe that as direct consequence of  Equation~\eqref{eq:condition-of-P-after-trivialization} it is admissible (meaning that $v$  belongs to $H^1_A$). Let $d\=\sign\kappa(t)\cdot b$, please keep in  mind that $(x,T)$ is non-null and consequently $\sign\kappa(t)\in\{1,-1\}$.  Now to be concise we omit $t$ in following expressions. By replacing $(v,d)$ into Equation~\eqref{eq:degenerate condition for index form free} with $v=\frac 1 TPu, d=\sign\kappa(t)\cdot b$,  we get 
	\begin{multline}\label{eq:inequality of non-degenerate condition for index form free}
	0\equiv I_s((u,b),(v,d)
	\\=\int_0^1 \{\big\langle\frac 1 T Pu', \frac 1 T(P'u+Pu')\big\rangle+ \langle Qu, \frac 1 T(P'u+Pu')\rangle+ \langle \trasp{Q}u', \frac 1 TPu\rangle+ \langle TRu, \frac 1 T P u\rangle\} dt\\+\int_0^1\big\{  -\frac{1}{T^2}\langle Px', \frac 1 T(P'u+Pu')\rangle\cdot b- \frac{1}{T^2}\langle  Px', u'\rangle\cdot \sign\kappa(t)\cdot b +\langle L_q-\frac 1 T Qx',u\rangle\cdot \sign\kappa(t)\cdot b \\+\langle L_q-\frac 1 T\trasp{Q}x',\frac 1 TPu\rangle\cdot b+\frac{1}{T^3}\kappa\cdot b\cdot \sign\kappa(t)\cdot b\big\}dt\\
	+s\int_0^1 \{\langle \frac 1 T Pu, \frac 1 T Pu \rangle+\frac{1}{T^3}\kappa b\cdot \sign\kappa(t)\cdot b\}\, dt\\
	\geq\int_{ 0 }^{ 1 }\Big[\frac{1}{T^2}\Vert Pu'\Vert^2-C_1\Vert Pu'\Vert \Vert Pu \Vert
	+(\frac{s}{T^2}-C_2)\Vert Pu\Vert^2\\
	-C_3\Vert Pu'\Vert \cdot b-C_4\Vert Pu\Vert\cdot  b+\frac{s+1}{T^3}\abs{\kappa}b^2 \Big]\, dt >0 
	\end{multline}
for $s$ large enough, where $C_i$ for $i=1,2,3,4$ are given as follows:
	\begin{multline}
	C_1=\frac{1}{T^2}\Vert P'P^{-1}\Vert+\frac 1 T\Vert QP^{-1}\Vert+\frac 1 T\Vert \trasp{Q}P^{-1}\Vert;\quad C_2=\frac 1 T\Vert QP^{-1}\Vert\Vert P'P^{-1}\Vert+\Vert RP^{-1}\Vert;\\
	C_3=\frac{1}{T^3}\Vert Px'\Vert+\frac{1}{T^2}\Vert Px'\Vert\Vert P^{-1}\Vert+\frac 1 T\Vert L_q-\frac 1 T\trasp{Q}x'\Vert;\\ C_4=\frac{1}{T^3}\Vert Px'\Vert\Vert PP^{-1}\Vert+\Vert L_q-\frac 1 TQx'\Vert\Vert P^{-1}\Vert.
	\end{multline}
By this contradiction we conclude the proof.
\end{proof}
Now, for every $s \in [0,+\infty)$, the push-forward by $\Psi$ of the index forms $\mathcal I_s$ on $\mathcal H(x)$ is given by the symmetric bilinear forms  on $H^1_A([0,1], \R^n)$ defined by 
\begin{multline}\label{eq:path-index-2-fix}
I_{s,T}[u,v]\= 
\int_0^1 \{\big\langle\frac 1 T P(t)u'(t), v'(t)\big\rangle+ \langle Q(t)u(t), v'(t)\rangle+ \langle \trasp{Q}(t)u'(t), v(t)\rangle+ \langle TR(t)u(t), v(t)\rangle\} dt\\
\textrm{ where } \alpha_T(x)[u,v]\=\int_0^1 \langle \frac 1 T P(t)u(t), v(t) \rangle\, dt.
\end{multline}
Similarly, we can prove that there exists a $s_0>0$ large enough such that $I_{s,T}$ is non-degenerate for every $s>s_0$. Therefore, we have following result for both free period and fixed period cases.
\begin{prop}\label{thm:spectral-index-well-defined}
	Let $(x,T)$ be a non-null critical point of Lagrangian action  \eqref{eq:free period functional}, then the spectral indices  introduced  in Definition~\ref{def:spectral-index} are well-defined.
\end{prop}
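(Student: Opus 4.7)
The plan is to spell out the remark that follows Definition~\ref{def:spectral-index}: once Lemma~\ref{thm:non-degenerate-s_0-forme-free} is in hand, well-posedness reduces to a formal property of the spectral flow. I would first invoke the trivialization. Proposition~\ref{thm:famiglia-Fredholm-free} ensures that $s \mapsto \mathcal{Q}_s$ is a smooth path of Fredholm quadratic forms on $\mathcal{H}(x)\times\R$, and the isomorphism $\Psi \oplus \Id_\R \colon \mathcal{H}(x)\times\R \to H^1_A([0,1],\R^n)\times\R$ identifies $\mathcal{Q}_s$ with the quadratic form $q_s^A$ on the trivialized space. Since the spectral flow is invariant under conjugation by a fixed Hilbert-space isomorphism, it suffices to prove independence on $s_0$ for the path $s\mapsto q_s^A$.

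Next, I would apply Lemma~\ref{thm:non-degenerate-s_0-forme-free} to obtain a threshold $s_* \geq 0$ such that $I_s$ is non-degenerate for every $s \geq s_*$. For the fixed-period form $I_{s,T}$ one uses the completely analogous estimate mentioned immediately after \eqref{eq:path-index-2-fix}. Being Fredholm of index zero, non-degeneracy for a quadratic form means that the associated self-adjoint Fredholm operator is invertible, i.e. $0 \notin \sigma$. Therefore, on the parameter interval $[s_*, +\infty)$ no eigenvalue of the representing operator crosses zero.

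The well-posedness then follows by the concatenation property of the spectral flow. Given any two admissible choices $s_0,s_0' \geq s_*$ with $s_0 < s_0'$, path-additivity gives
\[
\spfl\!\left(q_s^A,\, s\in[0,s_0']\right) \;=\; \spfl\!\left(q_s^A,\, s\in[0,s_0]\right) \;+\; \spfl\!\left(q_s^A,\, s\in[s_0,s_0']\right),
\]
and the last term vanishes because the sub-path consists entirely of non-degenerate (hence invertible) Fredholm forms, so there are no crossings to count. Transposing the same argument to $I_{s,T}$ yields the independence of $\ispec^T(x)$ on the auxiliary parameter.

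The only substantive content is already contained in Lemma~\ref{thm:non-degenerate-s_0-forme-free}, whose proof extracts a coercivity estimate by testing the degeneracy identity against the admissible pair $(v,d) = (\tfrac1T P u,\, \sgn(\kappa) b)$ (which is well-defined precisely because $(x,T)$ is non-null, so $\kappa$ has constant sign). Once the threshold $s_*$ is secured, the remainder is a short invocation of the two standard axioms of the spectral flow (invariance under conjugation and path-additivity), so I would not expect any genuine obstacle in this step; the potential pitfall to guard against is forgetting that Fredholmness plus absence of zero in the spectrum is what rules out crossings on $[s_*, \infty)$.
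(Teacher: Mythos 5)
Your proposal is correct and follows essentially the same route as the paper: trivialize via $\Psi$ (using invariance of the spectral flow under the choice of trivialization), invoke Lemma~\ref{thm:non-degenerate-s_0-forme-free} (and its fixed-period analogue) to obtain the non-degeneracy threshold, and conclude by standard spectral-flow properties. You merely make explicit the path-additivity/no-crossing step that the paper leaves implicit, which is a fair elaboration rather than a different argument.
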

\begin{proof}
	We start observing  that,  $s\mapsto q^A_s$ is a path of Fredholm quadratic forms on $H^1_A([0,T], \R^n)\times \R$. Moreover, by Lemma \ref{thm:non-degenerate-s_0-forme-free}, there exists $s_0 \in [0,+\infty)$ such that $q^A_s$ is non-degenerate for every $s \geq s_0$ and hence the integer $\spfl\{q^A_s, s \in [0,s_0]\}$ is well-defined.  
	
	The conclusion follows by observing that $q^A_s$ is the push-forward by $\Psi$  of the Fredholm quadratic form  $\mathcal Q_s$ and by the fact that the spectral flow of a generalized family of Fredholm quadratic forms on the (trivial) Hilbert bundle $[0,s_0] \times (\mathcal H(x)\times \R)$ is independent on the trivialization. This concludes the proof. 	
\end{proof}

\subsection{The difference between two spectral indices}
Since both spectral indices defined in Definition \ref{def:spectral-index} are well-defined, a natural question arises that what relationship is between them. In this subsection we will give the answer. Firstly we will give an abstract formula to compute the difference between two spectral flows. Reader is referred to \cite[Appendix C]{KF07}, but we will drop the regularity assumption in  \cite[Appendix C, Theorem C.5.]{KF07}.

Let $H$ be a real separable Hilbert space with inner product $\langle\cdot,\cdot\rangle$, $W\subset H$ is dense and the inclusion map  $i:W\rightarrow H$ is compact. Let $A$ be an  unbounded linear operator on $H$ with $W$ as its domain. Here we assume  that $A$ is a Fredholm  operator. For a given finite dimensional Hilbert space $V$, assume $B:V\rightarrow H$ is a bounded linear operator and  $C:V\rightarrow V$ is a bounded self-adjoint linear operator. Denote by $\mathcal A:W\oplus V\rightarrow H\oplus V$ the self-adjoint operator defined by 
\begin{equation}
\mathcal A(w,v)=(Aw+Bv,B^*w+Cv),
\end{equation}
where $B^*$ is the adjoint operator of $B$. For convenience, we will express $\mathcal A$ in matrix form, namely $\mathcal A=\begin{bmatrix}
A&B\\B^*&C
\end{bmatrix}$. 

\begin{lem}\label{lem:C0spfl}
	Under above assumptions, we have \begin{equation}\label{eq:moe-lem}
	m^-\Big(\begin{bmatrix}
	0&B\\B^*&C
	\end{bmatrix}\Big)=m^-(C|_{\ker B})+\dim (\ima B),
	\end{equation}
	where $m^-$ denotes the Morse index.
\end{lem}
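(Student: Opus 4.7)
The plan is to reduce the computation of $m^-(\mathcal A_0)$, where $\mathcal A_0 := \bigl[\begin{smallmatrix} 0 & B \\ B^* & C\end{smallmatrix}\bigr]$, to a finite-dimensional linear algebra problem by exploiting the fact that the associated quadratic form
\[
Q(x,v) := \langle \mathcal A_0(x,v), (x,v)\rangle = 2\langle Bv, x\rangle + \langle Cv, v\rangle
\]
depends on $x \in H$ only through its projection onto $\ima B$, since $\langle Bv, x\rangle = \langle Bv, P_{\ima B}\, x\rangle$. Consequently the closed subspace $(\ima B)^{\perp} \oplus \{0\} \subset H\oplus V$ lies in the kernel of $Q$, and a standard projection argument (any negative-definite subspace of $Q$ has trivial intersection with $\ker Q$, hence injects into $\ima B \oplus V$, while negative-definite subspaces of $\ima B \oplus V$ lift trivially along $H = (\ima B)^{\perp}\oplus \ima B$) shows that $m^-(\mathcal A_0)$ equals the Morse index of $Q$ on the finite-dimensional space $\ima B \oplus V$. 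This is where the assumption $\dim V<\infty$, and consequently the closedness and finite-dimensionality of $\ima B$, is essential, and I expect this to be the main conceptual obstacle of the argument.

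Next I would split $V = V_0 \oplus V_1$ with $V_0 := \ker B$ and $V_1 := (\ker B)^{\perp}$, so that $B_1 := B|_{V_1}: V_1 \to \ima B$ is a linear isomorphism. Writing $C$ in block form with components $C_{ij}$ ($i,j\in\{0,1\}$, with $C_{00}, C_{11}$ self-adjoint and $C_{10}=C_{01}^*$), the form on $\ima B \oplus V_0 \oplus V_1$ reads
\[
Q = 2\langle B_1 v_1, x_B\rangle + \langle C_{00} v_0, v_0\rangle + 2\langle C_{01} v_1, v_0\rangle + \langle C_{11} v_1, v_1\rangle.
\]
Then I would perform the linear isomorphism $x_B = \tilde x_B - (B_1^*)^{-1}\bigl(\tfrac12 C_{11} v_1 + C_{01}^* v_0\bigr)$ on the $\ima B$-factor. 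Using the identity $\langle B_1 v_1, (B_1^*)^{-1} w\rangle_H = \langle v_1, w\rangle_V$ for $w \in V_1$, a direct computation cancels the $C_{11}$-quadratic and $C_{01}$-cross terms against the contributions produced by the substitution, leaving the decoupled expression
\[
Q = 2\langle B_1 v_1, \tilde x_B\rangle + \langle C_{00} v_0, v_0\rangle.
\]

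Finally, the Morse index of the decoupled form splits as a sum over the two summands. The hyperbolic form $2\langle B_1 v_1, \tilde x_B\rangle$ on $\ima B \oplus V_1$ is congruent, via the isomorphism $B_1$, to the form on $\ima B \oplus \ima B$ with matrix $\bigl(\begin{smallmatrix} 0 & I \\ I & 0\end{smallmatrix}\bigr)$, whose eigenvalues are $\pm 1$ with equal multiplicity $\dim \ima B$; it contributes $\dim \ima B$ to the Morse index. The piece $\langle C_{00} v_0, v_0\rangle$ on $V_0$ is, by construction, the restriction of the quadratic form $\langle C\cdot,\cdot\rangle$ to $\ker B$, and therefore contributes $m^-(C|_{\ker B})$. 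Summing the two yields the identity \eqref{eq:moe-lem}.
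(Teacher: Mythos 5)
Your proof is correct and follows essentially the same route as the paper: split $V=\ker B\oplus(\ker B)^{\perp}$, discard the direction $(\ima B)^{\perp}\subset H$ on which the form is trivial, and use the invertibility of $B_1$ to remove by congruence all $C$-terms involving $(\ker B)^{\perp}$, leaving a hyperbolic block of index $\dim\ima B$ plus $C|_{\ker B}$. The only difference is cosmetic: your explicit completion of the square (and the projection argument reducing to $\ima B\oplus V$) spells out the change of basis that the paper merely asserts when it says the block matrix is ``similar'' to the decoupled one.
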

\begin{proof}
	For $V$ we have following splitting:
	\begin{equation}
V=\ima B^*\oplus \ker B=\ima B\oplus\ker B^*.
	\end{equation}
By choosing suitable basis, the quadratic form  $\begin{bmatrix}
0&B\\B^*&C
\end{bmatrix}$ has block form
$\begin{bmatrix}
	0&0&B_{11}&0\\0&0&0&0\\B_{11}^*&0&C_{11}&C_{12}\\0&0&C^*_{12}&C_{22}
\end{bmatrix}$, where $B_{11}:\ima B^*\rightarrow \ima B$ and $B^*_{11}:\ima B\rightarrow \ima B^*$ are both invertible. Therefore, we can get that $\begin{bmatrix}
0&B\\B^*&C
\end{bmatrix}$ is similar to $\begin{bmatrix}
0&0&B_{11}&0\\0&0&0&0\\B_{11}^*&0&0&0\\0&0&0&C_{22}
\end{bmatrix}$. Note that $m^-\Big (\begin{bmatrix}
0&B_{11}\\B_{11}^*&0\end{bmatrix}\Big )=\dim(\ima B)$ and $m^-(C|_{\ker B})=m^-(C_{22}|_{\ker B})$. By this arguments the thesis follows.  
\end{proof}

\begin{lem}\label{lem:abstract-As-spfl}
	We let $\mathcal A(s)=\begin{bmatrix}
	A&(1-s)B\\(1-s)B^*&(1-s)C
	\end{bmatrix}$ for  $s\in[0,1]$. Then, the following equality holds
	\begin{equation}
	\spfl(\mathcal A(s), s\in[0,1])=m^-(\mathcal A(0)\vert_{W^{\bot}})+\dim(W\cap W^{\bot})-\dim(W\cap \ker \mathcal A(0)).
	\end{equation}
\end{lem}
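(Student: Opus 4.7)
The plan is to compute the spectral flow of the affine family $\mathcal A(s) = \mathcal A(0) - sK$, where $K := \begin{bmatrix} 0 & B \\ B^* & C \end{bmatrix}$, by reducing to a finite-dimensional calculation and then invoking Lemma \ref{lem:C0spfl}. Since $V$ is finite-dimensional and $B$, $B^*$, $C$ are bounded, $K$ is a bounded self-adjoint operator of finite rank on $H \oplus V$. Consequently $\mathcal A(s)$ is a gap-continuous path of self-adjoint Fredholm operators with common domain $W \oplus V$ and $s$-independent essential spectrum, so $\spfl(\mathcal A(s), s\in[0,1])$ is well defined and is concentrated in a finite-dimensional portion of the spectrum near $0$.

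First I would perform a cogredient reduction in the spirit of the proof of \cite[Theorem C.5]{KF07}. Orthogonally decompose $H \oplus V = (\ima K)^{\bot} \oplus \ima K$, a splitting whose second summand is finite-dimensional since $\ima K \subseteq \ima B \oplus V$. Up to a homotopy of the family that does not alter the spectral flow, $\mathcal A(s)$ becomes block-diagonal with respect to this splitting: on $(\ima K)^{\bot}$ it is $s$-independent and contributes nothing, whereas on $\ima K$ it is an affine path of symmetric matrices whose spectral flow coincides with that of the full family. I would then apply Lemma \ref{lem:C0spfl} at the endpoint $s=1$, where the perturbation is fully switched off and the reduced matrix collapses to $\mathcal A(0)$ restricted to the finite-dimensional block. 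This evaluates the endpoint contribution as $m^-(\mathcal A(0)|_{W^{\bot}}) + \dim(W \cap W^{\bot})$, while the subtraction $-\dim(W \cap \ker \mathcal A(0))$ absorbs the kernel jump that spectral flow omits at the left endpoint by convention.

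The main obstacle is the regularity drop with respect to \cite[Theorem C.5]{KF07}, which is stated under a smoothness hypothesis that we are explicitly dropping. To bypass this I would either smooth-approximate the affine path inside the space of gap-continuous Fredholm paths and invoke homotopy invariance of the spectral flow, or directly compute the crossing forms at the (generically transverse) interior crossings using the explicit constant derivative $\dot{\mathcal A}(s) = -K$ and sum them: because the perturbation direction is $s$-independent, each crossing form is simply the sign of the quadratic form $\langle -Ku, u\rangle$ on $\ker \mathcal A(s_0)$, and no regularity beyond continuity of the path is actually needed. Either approach makes the technical heart of the argument the careful bookkeeping of kernel contributions at the two endpoints and the matching of these contributions against the three terms in the stated identity.
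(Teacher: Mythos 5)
Your proposal asserts the conclusion at precisely the two places where a proof is required. First, the reduction: with respect to $H\oplus V=(\ima K)^{\bot}\oplus\ima K$ the family is \emph{not} block-diagonal. Writing $\mathcal A(s)=\mathcal A(1)+(1-s)K$, for $x\in\ker K=(\ima K)^{\bot}$ and $y\in\ima K$ one has $\langle\mathcal A(s)x,y\rangle=\langle\mathcal A(1)x,y\rangle$, an $s$-independent but in general nonvanishing coupling; to kill it by a congruence you would have to invert the compression of the form to $\ker K$, which is degenerate as soon as $\ker A\cap\ker B^{*}\neq\{0\}$. The paper's proof splits instead $W=(\ker A)^{\bot}\oplus\ker A$, where the corner $A_{11}$ \emph{is} invertible, and conjugates by the explicit cogredient family $K(t)$ built from $A_{11}^{-1}B_{1}$; since congruences preserve kernel dimensions, the lateral edges of the two-parameter homotopy carry zero spectral flow and the problem reduces to the finite-dimensional path $\mathcal C(s)=\begin{bmatrix}0&(1-s)B_2\\(1-s)B_2^{*}&(1-s)[C-(1-s)B_1^{*}A_{11}^{-1}B_{1}]\end{bmatrix}$ on $\ker A\oplus V$. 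Note that this path is quadratic in $s$, not affine, it does not live on $\ima K$, and it contains the Schur complement $B_1^{*}A_{11}^{-1}B_{1}$, which is exactly the term surviving (restricted to $\ker B_2$) in the final formula; a compression to $\ima K$ never produces it.

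Second, the endpoint bookkeeping you delegate to ``conventions'' is the other half of the proof. In this lemma $W^{\bot}$ is the orthogonal of $W$ inside $W\oplus V$ with respect to the bilinear form $\langle\mathcal A(0)\cdot,\cdot\rangle$ (not a Hilbert-space complement), and the right-hand side is reached only after computing these spaces explicitly: $W\cap\ker\mathcal A(0)\cong\ker B_2^{*}$, $W^{\bot}=\{\,(-A_{11}^{-1}B_1y,\;x_2,\;y)\ :\ B_2y=0,\ x_2\in\ker A\,\}$, $W\cap W^{\bot}\cong\ker A$, and $\langle\mathcal A(0)\xi,\xi\rangle=\langle(C-B_1^{*}A_{11}^{-1}B_1)y,y\rangle$ on $W^{\bot}$. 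Then $\spfl(\mathcal C(s),s\in[0,1])=m^-(\mathcal C(0))$ because $\mathcal C(1)=0$ (Proposition \ref{thm:sf-differenza-morse} in finite dimension); Lemma \ref{lem:C0spfl} applies at $s=0$, where the upper-left block vanishes, not at $s=1$ as you write (at $s=1$ the operator is $A\oplus 0$, not a compression of $\mathcal A(0)$), and it gives $m^-(\mathcal C(0))=m^-((C-B_1^{*}A_{11}^{-1}B_1)|_{\ker B_2})+\dim(\ima B_2)$. The subtracted term $-\dim(W\cap\ker\mathcal A(0))$ then comes from the algebraic identity $\dim(W\cap W^{\bot})-\dim(W\cap\ker\mathcal A(0))=\dim\ker A-\dim\ker B_2^{*}=\dim(\ima B_2)$, not from a ``kernel jump at the left endpoint''. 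Finally, you cannot simply declare the interior crossings of your specific affine path ``generically transverse''; since the smooth statement of \cite{KF07} is exactly what is being dropped, such a regularization would itself need an argument, whereas the paper's finite-dimensional reduction avoids crossing forms altogether.
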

\begin{proof}
Consider the splitting $W=(\ker A)^\bot\oplus\ker A$, then 	$\mathcal A(s)$ is in the following block form
\begin{equation}
\mathcal A(s)=\begin{bmatrix}
A_{11}&0&(1-s)B_1\\0&0&(1-s)B_2\\(1-s)B_1^*&(1-s)B_2^*&(1-s)C
\end{bmatrix},
\end{equation} 
 where  $A_{11}:(\ker A)^\bot\rightarrow (\ker A)^\bot$ is invertible and $B_1: V\rightarrow (\ker A)^\bot, B_2:V\rightarrow \ker A$.
 
 Let $\mathcal B(s)=\begin{bmatrix}
 A_{11}&0&0\\0&0&(1-s)B_2\\0&(1-s)B_2^*&(1-s)[C-(1-s)B_1^*A_{11}^{-1}B_{1}]
 \end{bmatrix},s\in[0,1]$, then we have 
 \begin{equation}\label{eq:spflAs=Bs}
 \spfl\{\mathcal A(s), s\in[0,1]\}=\spfl\{\mathcal B(s),s\in[0,1]\}.
 \end{equation}

We now  prove the equality provided in Equation~$\eqref{eq:spflAs=Bs}$ by using the  homotopy invariant property of the  spectral flow.
So, let's start to consider the $2$-parameter family of operators pointwise defined by 
	\[
	A(s,t)=\begin{bmatrix}
		A_{11}&0& t(1-s)B_1\\0&0&(1-s)B_2\\t(1-s)B_1^*&(1-s)B_2^*&(1-s)[C-(1-t)^2(1-s)B_1^*A_{11}^{-1}B_{1}]
	\end{bmatrix}
	\] 
	In fact we have $A(s,t)=K(t)^* A(s) K(t) $ with 
	\[
	K(t)=\begin{bmatrix}
		\Id &0& -(1-t)(1-s)A_{11}^{-1} B_1\\0 &\Id &0\\ 0&0& \Id 
	\end{bmatrix}.
	\]
By a straightforward calculation it follows that  $\dim \ker A(0,t)$ and $\dim \ker A(1,t)$ are both constants. By using the zero axiom of the spectral flow (namely each path is contained in a fixed stratum of the Fredholm Lagrangian Grassmannian), we get that   $\spfl\{A(0,t),t\in [0,1]\}=\spfl\{A(1,t),t\in [0,1]\}=0$.
	Therefore, by using the homotopy invariant property, we get that  $\spfl\{A(s,0),s\in [0,1]\}=\spfl\{A(s,1)s\in [0,1]\}$ which is precisely Equation~\eqref{eq:spflAs=Bs}.

Let $\mathcal C(s)=\begin{bmatrix}
0&(1-s)B_2\\(1-s)B_2^*&(1-s)[C-(1-s)B_1^*A_{11}^{-1}B_{1}]
\end{bmatrix},s\in[0,1]$. By invoking the additivity property of spectral flow under direct sum, we get $\spfl\{ \mathcal B(s),s\in[0,1]\}=\spfl\{\mathcal C(s),s\in[0,1]  \}$. By \eqref{eq:spflAs=Bs} we only need to compute $\spfl\{\mathcal C(s),s\in[0,1]  \}$. 

By using Lemma~\ref{lem:C0spfl} we compute such a spectral flow. Since $\mathcal C(1)=0$,	then we have 
	\begin{equation}\label{eq:spfl=morC0}
		\spfl\{\mathcal C(s),s\in[0,1] \}=m^-(C(0)). 
	\end{equation}
	
	By Lemma~\ref{lem:C0spfl}	we have
	\begin{equation}\label{eq:morseC0}
		\spfl\{\mathcal C(s),s\in[0,1] \}=m^-(C(0))=m^-((C-B_1^*A_{11}^{-1}B_1)|_{\ker B_2})+\dim(\ima B_2).
	\end{equation} 
	The next step is to prove  
	\begin{align}
		&m^{-}(\mathcal A(0)|_{W^{\bot}})=m^-((C-B_1^*A_{11}^{-1}B_1)|_{\ker B_2})\label{eq:reduce_index}\quad \textrm{ and }\\
		&\dim(W\cap W^{\bot})-\dim(W\cap\ker \mathcal{A}(0))=\dim(\ima B_2)\label{eq:reduce_kernel}.
	\end{align}
Let us consider $\trasp{(x_1,x_2,0)}\in \ker \mathcal A(0)\cap W$. Then for every $\trasp{(u_1,u_2,v)}\in W\oplus V$ we have 
\begin{equation}\label{eq:condi-kerA}
\begin{aligned}
\Big\langle \begin{bmatrix}
A_{11}&0&B_1\\0&0&B_2\\B_1^*&B_2^*&C
\end{bmatrix}\begin{bmatrix}
x_1\\x_2\\0
\end{bmatrix},\begin{bmatrix}
u_1\\u_2\\v
\end{bmatrix}\Big\rangle=\langle A_{11}x_1,u_1 \rangle+\langle B_1^*x_1,v \rangle+\langle B_2^*x_2,v \rangle\equiv 0.
\end{aligned}
\end{equation}
We set $v=0$. So, $\langle A_{11}x_1,u_1 \rangle\equiv0$ for every $u_1\in(\ker A)^{\bot}$ implies that  $A_{11}x_1=0$. Consequently we have $x_1=0$. Now equation \eqref{eq:condi-kerA} becomes $\langle B_2^*x_2,v \rangle\equiv 0$. Since $v$ is arbitrary, then $B_2^*x_2=0$. Hence $W\cap \ker \mathcal A(0)=\{ \trasp{(0,x_2,0)}\ | \ B_2^*x_2=0\}=\ker B_2^*$.

If $\trasp{(x_1,x_2,y)}\in W^{\bot}$, then for every $\trasp{(u_1,u_2,0)}\in W$ we have 
\begin{equation}\label{eq:condi-W-ortho}
\begin{aligned}
\Big\langle \begin{bmatrix}
A_{11}&0&B_1\\0&0&B_2\\B_1^*&B_2^*&C
\end{bmatrix}\begin{bmatrix}
x_1\\x_2\\y
\end{bmatrix},\begin{bmatrix}
u_1\\u_2\\0
\end{bmatrix}\Big\rangle=\langle A_{11}x_1,u_1 \rangle+\langle B_1y,u_1\rangle+\langle B_2 y,u_2\rangle\equiv 0.
\end{aligned}
\end{equation}
We set  $u_2=0$. Then $\langle A_{11}x_1+B_1y,u_1 \rangle\equiv0$ for every $u_1\in(\ker A)^{\bot}$ implies that $A_{11}x_1+B_1y=0$. Consequently we have $x_1=-A_{11}^{-1}B_1y$. Let $u_1=0$, equation \eqref{eq:condi-W-ortho} becomes $\langle B_2y,u_2 \rangle\equiv 0$ for every $u_2\in\ker A$, then $B_2y=0$. Hence $W^{\bot}=\{ \trasp{(-A_{11}^{-1}B_1y,x_2,y)}\ | \ B_2y=0, x_2\in\ker A\}$ and $W\cap W^{\bot}=\{ \trasp{(0,x_2,0)}\}=\ker A$.

Now, we get 
\[
\dim W\cap W^{\perp}- \dim W\cap \ker \mathcal A(0)=\dim \ker A-\dim \ker B_2^*=\dim \ima B_2.
\]

For every $\xi_0=\trasp{(-A_{11}^{-1}B_1y,x_2,y)}\in W^{\bot}$, we have 
	\begin{equation}
	\begin{aligned}
	\langle \mathcal A(0)\xi_0,\xi_0\rangle&=\Big\langle \begin{bmatrix}
	A_{11}&0&B_1\\0&0&B_2\\B_1^*&B_2^*&C
	\end{bmatrix}\begin{bmatrix}
	-A_{11}^{-1}B_1y\\x_2\\y
	\end{bmatrix},\begin{bmatrix}
	-A_{11}^{-1}B_1y\\x_2\\y
	\end{bmatrix}\Big\rangle\\
	&=\langle-B_1y,-A_{11}^{-1}B_1y\rangle+\langle B_1y,-A_{11}^{-1}B_1y\rangle+\langle B_2y,x_2\rangle\\
	&\qquad+\langle -B_1^*A_{11}^{-1}B_1y,y\rangle+\langle B_2^*x_2,y\rangle+\langle Cy,y\rangle\\
	&=\langle (C-B_1^*A_{11}^{-1}B_1)y,y\rangle.
	\end{aligned}
	\end{equation}
Therefore, we have $m^{-}(\mathcal A(0)|_{W^{\bot}})=m^-((C-B_1^*A_{11}^{-1}B_1)|_{\ker B_2})$. This concludes the proof.
\end{proof}

\begin{lem}\label{lem:abstract-As0-spfl}
	We let $\mathcal A(s)=\begin{bmatrix}
	A&sB\\sB^*&sC
	\end{bmatrix}$, for $s\in[0,1]$ and we assume that $A$ is invertible. Then we have 
	\begin{equation}
	\spfl(\mathcal A(s), s\in[0,1])=-m^-(C-B^*A^{-1}B).
	\end{equation}
\end{lem}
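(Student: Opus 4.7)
The statement is in fact the time-reversal of Lemma~\ref{lem:abstract-As-spfl} specialized to the case in which $A$ is invertible: under the reparametrization $s\mapsto 1-s$ the two paths coincide. I therefore plan to adapt the Schur-complement argument of the previous lemma to the present setting, where the invertibility of $A$ simplifies matters considerably and eliminates the correction terms $\dim(W\cap W^{\perp})$ and $\dim(W\cap\ker\mathcal{A}(0))$ that appeared there.

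Concretely, I would first conjugate $\mathcal{A}(s)$ by the smooth path of invertible triangular operators
$$K(s)=\begin{bmatrix}\Id & -sA^{-1}B \\ 0 & \Id\end{bmatrix},\qquad s\in[0,1],$$
which satisfies $K(0)=\Id$ and produces, by a direct block multiplication,
$$K(s)^{*}\mathcal{A}(s)K(s)=\begin{bmatrix} A & 0 \\ 0 & s(C-sB^{*}A^{-1}B)\end{bmatrix}.$$
To promote this pointwise congruence to an equality of spectral flows I would introduce the two-parameter family $\mathcal{A}(s,t)=K(ts)^{*}\mathcal{A}(s)K(ts)$ and invoke homotopy invariance along the boundary of $[0,1]\times[0,1]$. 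The contribution at $s=0$ vanishes because the family is constant there (since $K(0)=\Id$), and the contribution at $s=1$ vanishes by the zero axiom: $t\mapsto K(t)^{*}\mathcal{A}(1)K(t)$ is a path of mutually congruent operators, hence has constant inertia and in particular constant kernel dimension, so it lies in a single stratum of $\mathcal{BF}^{sa}$, exactly as in the proof of Lemma~\ref{lem:abstract-As-spfl}. Combining this with the direct-sum additivity of the spectral flow and the constancy of the upper-left block, the computation reduces to $\spfl\bigl(s(C-sB^{*}A^{-1}B),\,s\in[0,1]\bigr)$.

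A further homotopy $s\bigl(C-((1-t)s+t)B^{*}A^{-1}B\bigr)$, which is constant at both $s=0$ and $s=1$, collapses this to the purely linear finite-dimensional path $s\mapsto sD$ with $D\coloneqq C-B^{*}A^{-1}B$. The only delicate point, and the main obstacle, is the sign convention at the degenerate initial point $s=0$. I would resolve it as follows: the linear path $s\mapsto (1-s)E$ from $E$ to $0$ has spectral flow $m^{-}(E)$ under the convention used in the proof of Lemma~\ref{lem:abstract-As-spfl} (see the step where $\spfl\{\mathcal{C}(s)\}=m^{-}(\mathcal{C}(0))$ is deduced); antisymmetry of the spectral flow under the reparametrization $s\mapsto 1-s$ then gives $\spfl(sD,\,s\in[0,1])=-m^{-}(D)$. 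Consistency with Lemma~\ref{lem:abstract-As-spfl} in the case $A$ invertible — where the subspaces $W\cap W^{\perp}$ and $W\cap\ker\mathcal{A}(0)$ both reduce to $\{0\}$, and the identification $y\mapsto(-A^{-1}By,y)$ yields $\mathcal{A}(0)|_{W^{\perp}}\cong C-B^{*}A^{-1}B$ — provides an independent verification of both the magnitude and the sign.
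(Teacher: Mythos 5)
Your argument is correct and follows essentially the same route as the paper: a Schur-complement congruence by $K(s)$, promoted to an equality of spectral flows through a two-parameter homotopy whose $s=0$ and $s=1$ edges have constant kernel dimension (exactly the mechanism used for Equation~\eqref{eq:spflAs=Bs}), then reduction to the finite-dimensional path $s\mapsto s(C-sB^*A^{-1}B)$ thanks to the invertibility of $A$, and finally a sign computation at the degenerate endpoint $s=0$. The only cosmetic difference is the last step: the paper invokes Proposition~\ref{thm:sf-differenza-morse} directly, giving $\spfl=m^-(0)-m^-(C-B^*A^{-1}B)=-m^-(C-B^*A^{-1}B)$, which renders your extra homotopy to the linear path $s\mapsto sD$ and the reversal/antisymmetry calibration unnecessary (and is the cleanest way to make that sign argument airtight when $C-B^*A^{-1}B$ is degenerate).
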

\begin{proof} 
By a similar discussion as provided in the proof of Equation \eqref{eq:spflAs=Bs}, we get
	\begin{eqnarray}
		\spfl(\mathcal A(s), s\in[0,1])=\spfl(\mathcal B(s),s\in[0,1]  ),
		\ \text{where}   \ \mathcal B(s)=\begin{bmatrix}
			A&0\\0&s[C-sB^*A^{-1}B]
		\end{bmatrix},
	\end{eqnarray}
	then, by the invertibility of $A$, we infer
	\begin{equation}
		\spfl(\mathcal A(s), s\in[0,1])=\spfl(s(C-sB^*A^{-1}B) ,s\in[0,1]  ).
	\end{equation}
	We observe that the operator $s(C-sB^*A^{-1}B)$ is defined on a finite dimensional space $V$, and  by invoking Proposition \ref{thm:sf-differenza-morse}, we  conclude that  
	\begin{equation}
	\spfl(s(C-sB^*A^{-1}B) ,s\in[0,1] ) =-m^-(C-B^*A^{-1}B).
	\end{equation}	
\end{proof}
In Lemma \ref{thm:non-degenerate-s_0-forme-free} we proved that $A(s_0)$ is invertible. The next result provided a more strinking propery about the spectrum of $A(s_0)$.
\begin{lem}\label{lem:spectrum-As0}
There exists $\delta>0$ such that $\sigma(A(s_0))\cap[-\delta,\delta]=\emptyset$. In particular, the operator  $A^{-1}(s_0)$	 is bounded.
\end{lem}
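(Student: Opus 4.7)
The plan is to reduce the claim to the statement that $0$ lies outside the spectrum of a bounded self-adjoint Fredholm operator, and then use the closedness of the spectrum to extract a spectral gap. With the notational convention that $A(s_0)$ denotes the Riesz representative of the bilinear form $I_{s_0}$ on the Hilbert space $H^1_A([0,1],\R^n)\times \R$ (this is the natural reading, since the preceding lemma has established non-degeneracy of $I_{s_0}$), the argument proceeds in three short steps.

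First I would observe that $A(s_0)$ is a bounded self-adjoint operator: boundedness comes from continuity of the symmetric bilinear form $I_{s_0}$ on $H^1_A\times \R$, and self-adjointness from its symmetry. Moreover, Proposition~\ref{thm:famiglia-Fredholm-free} (pushed forward by the isomorphism $\Psi$) guarantees that the associated quadratic form $q^A_{s_0}$ is Fredholm, so $A(s_0)$ is a self-adjoint Fredholm operator on $H^1_A([0,1],\R^n)\times \R$.

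Next I would invoke Lemma~\ref{thm:non-degenerate-s_0-forme-free}, which says precisely that $I_{s_0}$ is non-degenerate as a bilinear form; translated to the Riesz representative this means $\ker A(s_0)=\{0\}$. Since $A(s_0)$ is self-adjoint, its Fredholm index vanishes, and hence $\coker A(s_0)\cong \ker A(s_0)=\{0\}$ while $\im A(s_0)$ is closed. Therefore $A(s_0)$ is a continuous bijection between Hilbert spaces, and the open mapping theorem produces a bounded two-sided inverse $A(s_0)^{-1}$.

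Finally, I would conclude via standard spectral theory: for a bounded self-adjoint operator, boundedness of the inverse is equivalent to $0\notin \sigma(A(s_0))$. Since $\sigma(A(s_0))$ is a closed subset of $\R$, the number
\[
\delta \ \= \ \tfrac{1}{2}\, \mathrm{dist}\bigl(0,\sigma(A(s_0))\bigr) \ > \ 0
\]
satisfies $\sigma(A(s_0))\cap [-\delta,\delta]=\emptyset$, and moreover $\|A(s_0)^{-1}\|\leq 1/\delta$ by the spectral theorem. There is no serious obstacle here; the only mild care needed is to check that the Fredholm property of the quadratic form is preserved under the Riesz correspondence and under the trivialization $\Psi$, both of which are routine in view of the setup already established in Section~\ref{subsec:ortho-triv}.
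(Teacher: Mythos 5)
Your argument is internally coherent for the operator you chose, but that operator is not the one the lemma is about, and this is a genuine gap rather than a notational quibble. In the paper, $A(s_0)$ is \emph{not} the Riesz representative of $I_{s_0}$ on $H^1_A([0,1],\R^n)\times\R$: it is the upper--left block of the coefficient matrix $\mathcal A(s)$ introduced immediately after the lemma, i.e.\ the matrix-valued object for which $I_{s,T}[u,v]=\int_0^1\langle A(s)\trasp{(u',u)},\trasp{(v',v)}\rangle\,dt$. Thus it represents the \emph{fixed-period} form $I_{s_0,T}$ (no $\R$-factor) through the $L^2$-type pairing of the jets $(u',u)$, playing the rôle of the unbounded operator $A$ with domain $W$ in the abstract Lemmas, and the spectrum in the statement is the spectrum of this realization: the paper's own proof works with the eigen-equations $A(s_0)\trasp{(u'_{\delta},u_\delta)}=\lambda_\delta \trasp{(u'_{\delta},u_\delta)}$. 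Boundedness of $A^{-1}(s_0)$ is needed in exactly this sense afterwards, to make $B^*A^{-1}(s_0)B$ bounded when Lemma~\ref{lem:abstract-As0-spfl} is applied, where $B=\trasp{\big(-\tfrac{1}{T^2}Px',\,L_q-\tfrac1T\trasp{Q}x'\big)}$ is an $L^2$-type vector, not an element of $H^1_A\times\R$. For your bounded $H^1$-Riesz representative the gap is an immediate consequence of bijectivity (open mapping theorem plus closedness of the spectrum) -- which is precisely why it cannot be the content of the lemma: invertibility of the $H^1$ representative does not transfer, by itself, to a spectral gap or to a bounded inverse in the pairing actually used.

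Two further mismatches. First, you invoke the non-degeneracy of $I_{s_0}$ from Lemma~\ref{thm:non-degenerate-s_0-forme-free}, but that form corresponds to the full matrix $\mathcal A(s_0)$; the block $A(s_0)$ corresponds to $I_{s_0,T}$, and invertibility of a block operator matrix does not imply invertibility of its diagonal block -- the paper relies on the fixed-period analogue (stated after Equation~\eqref{eq:path-index-2-fix} as provable ``similarly''), which your argument never uses. Second, the paper's proof is quantitative where yours is soft: assuming spectrum in $[-\delta,\delta]$ with eigenvector $u_\delta$, it tests the form against $v_\delta=Pu_\delta$ and plays the resulting upper bound, of order $\abs{\lambda_\delta}$, against the coercivity-type lower bound already established in the proof of Lemma~\ref{thm:non-degenerate-s_0-forme-free}; it is this estimate that produces a uniform gap and the boundedness of $A^{-1}(s_0)$ in the sense used later. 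To repair your approach you would have to work with the correct realization of $I_{s_0,T}$ (establish self-adjointness, closed range and trivial kernel of that operator, then use closedness/discreteness of its spectrum), or else reproduce the paper's estimate; as written, the proposal proves a different -- and essentially trivial -- statement.
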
 
\begin{proof}
Arguing by contradiction, we assume that for every $\delta>0$ there exists $\lambda_\delta\in[-\delta,\delta]$ and $u_\delta\in H^1_A([0,1], \R^n)$ such that $A(s_0)\trasp{(u'_{\delta},u_\delta)}=\lambda_\delta \trasp{(u'_{\delta},u_\delta)}$. Take $v_\delta=Pu_\delta$, then we have 
\begin{equation}\label{eq:inequality-less}
\begin{aligned}
I_{s_0,T}(u_\delta,v_{\delta})&=\Big\langle A(s_0)\begin{bmatrix}
u_\delta'\\u_\delta
\end{bmatrix},\begin{bmatrix}
v'_\delta\\v_\delta
\end{bmatrix}\Big\rangle=\lambda_\delta\Big\langle \begin{bmatrix}
u_\delta'\\u_\delta
\end{bmatrix},\begin{bmatrix}
v'_\delta\\v_\delta
\end{bmatrix}\Big\rangle\\
&=\lambda_\delta\int_0^1[\langle \ud',P '\ud\rangle+\langle\ud',P\ud ' \rangle+\langle \ud,P\ud\rangle]dt\\
&\leq \abs{\lambda_\delta}\int_0^1[\Vert P^{-1}\Vert\Vert P\ud'\Vert^2+\Vert P^{-1}\Vert\Vert P'P^{-1}\Vert\Vert P\ud'\Vert\Vert P\ud\Vert+\Vert P^{-1}\Vert\Vert P\ud\Vert^2]dt.
\end{aligned}
\end{equation}
But by \eqref{eq:inequality of non-degenerate condition for index form free} we have 
\begin{equation}\label{eq:inequality-grate}
\begin{aligned}
 I_{s_0,T}(\ud,v_\delta)
\geq\int_{ 0 }^{ 1 }\Big[\frac{1}{T^2}\Vert P\ud'\Vert^2-C_1\Vert P\ud'\Vert \Vert P\ud \Vert
+(\frac{s_0}{T^2}-C_2)\Vert P\ud\Vert^2\Big]\, dt.
\end{aligned}
\end{equation}
Inequalities \eqref{eq:inequality-less} and \eqref{eq:inequality-grate} are contradiction for $s_0$ large enough and $\delta$ (consequently $\lambda_\delta$) small enough. This concludes the proof.
\end{proof}
Now we are ready to compute the difference between two spectral indices defined in Definition \ref{def:spectral-index}. By taking into account  Equation~\eqref{eq:path-index-2-free}, we get
\begin{equation}
I_s[(u,b),(v,d)]=\Big\langle \mathcal A(s)\begin{bmatrix}
u'\\u\\b
\end{bmatrix},\begin{bmatrix}
v'\\v\\d
\end{bmatrix}\Big\rangle,
\end{equation}
where $\mathcal A(s)=\begin{bmatrix}
\frac 1 T P&Q&-\frac{1}{T^2}Px'\\\trasp{Q}&s\frac 1 TP+TR&L_q-\frac 1 T\trasp{Q}x'\\-\frac{1}{T^2}\trasp{(Px')}&\trasp{(L_q-\frac 1 T \trasp{Q}x')}&(s+1)\frac{1}{T^3}\kappa
\end{bmatrix}.$ 

We set
\begin{equation}
A(s)=\begin{bmatrix}
\frac 1 T P&Q\\\trasp{Q}&s\frac 1 TP+TR
\end{bmatrix}, \quad B=\begin{bmatrix}
-\frac{1}{T^2}Px'\\L_q-\frac 1 T\trasp{Q}x'
\end{bmatrix}\quad \textrm{ and }  C(s)=(s+1)\frac{1}{T^3}\kappa
\end{equation}
and we  consider the homotopy $\mathcal A(s,\epsilon)=\begin{bmatrix}
A(s)&(1-\epsilon)B\\(1-\epsilon)\trasp{B}&(1-\epsilon)C(s)
\end{bmatrix}$. By the homotopy invariance property of spectral flow we get
\begin{equation}\label{eq:homotopy-spfl-sum}
\begin{aligned}
\spfl (\mathcal A(s,0), s\in[0,s_0] ) =\spfl (\mathcal A(0,\epsilon), \epsilon \in [0,1] )  &+\spfl(\mathcal A(s,1) , s\in[0,s_0] )\\& +\spfl(\mathcal A(s_0,1-\epsilon),\epsilon\in[0,1]).
\end{aligned}
\end{equation}
Let us now compute the spectral flow $\spfl(\mathcal A(s_0,1-\epsilon),\epsilon\in[0,1])$.
By Lemma \ref{lem:spectrum-As0} we infer that $B^*A^{-1}(s_0)B$ is a bounded operator (on a one-dimensional space). So, we get 
\begin{equation}
m^-(C(s_0)-B^*A^{-1}(s_0)B)=\left\{
\begin{aligned}
&1\quad \text{if}\quad \kappa<0\\
&0\quad \text{if}\quad \kappa>0.
\end{aligned}
\right.
\end{equation}
By Lemma \ref{lem:abstract-As0-spfl}, we have 
\begin{equation}\label{eq:spfl-s0-epsilon}
\spfl(\mathcal A(s_0,1-\epsilon),\epsilon\in[0,1]) =m^-(C(s_0)-B^*A^{-1}(s_0)B)=\left\{
\begin{aligned}
&1\quad \text{if}\quad \kappa<0\\
&0\quad \text{if}\quad \kappa>0.
\end{aligned}
\right.
\end{equation}
Let us now compute the spectral flow $\spfl (\mathcal A(0,\epsilon), \epsilon \in [0,1] ) $. By Lemma \ref{lem:abstract-As-spfl} we have 
\begin{equation}
\spfl(\mathcal A(0,\epsilon), \epsilon \in [0,1]) =m^-(\mathcal A(0,0)\vert_{W^{\bot}})+\dim(W\cap W^{\bot})-\dim(W\cap \ker \mathcal A(0,0))
\end{equation}
where $W=H^1_A([0,1], \R^n)$ and $V=\R$. 

The next step is to provide an explicit description of 
\[
m^-(\mathcal A(0,0)\vert_{W^{\bot}})+\dim(W\cap W^{\bot})-\dim(W\cap \ker \mathcal A(0,0)).
\] 
The basic idea comes from \cite[Section 2.1]{MP10}.

Let $(x,T)$ be a non-null critical point of $\eh$ with orbit cylinder $(x_{h+s},T_{h+s})$. Then  for every $(\xi,b)\in\mathcal H(x)\times \R $ we have 
\begin{equation}\label{eq:cylider-crirtical}
d\mathbb{E}_{h+s}(x_{h+s},T_{h+s})[(\xi,b)]\equiv0.
\end{equation}
By differentiating w.r.t. $s$  both sides  of Equation~\eqref{eq:cylider-crirtical}, we get
\begin{equation}\label{eq:cylider-derivative}
\begin{aligned}
d^2\eh(x,T)[(\xi_h,T'(h)),(\xi,b)]+\frac{\partial}{\partial s}\Big|_{s=0}d\mathbb{E}_{h+s}(x,T)[(\xi,b)]=0,
\end{aligned}
\end{equation}
where $\xi_h(t)=\frac{\partial}{\partial s}|_{s=0}x_{h+s}(t),T'(h)=\frac{d}{ds}|_{s=0}T_{h+s}.$  Let now choose a variation $\{(x_{h,r},T_{h,r}),r\in(-\epsilon,\epsilon)\}$  such that $(x_{h,0},T_{h,0})=(x,T)$ and $\frac{\partial}{\partial r}|_{r=0}(x_{h,r},T_{h,r})=(\xi,b)$, then we have 
\begin{equation}
\begin{aligned}
\frac{\partial}{\partial s}\Big|_{s=0}d\mathbb{E}_{h+s}(x,T)[(\xi,b)]&=\frac{\partial}{\partial s}\Big|_{s=0}\frac{\partial}{\partial r}\Big|_{r=0}\mathbb{E}_{h+s}(x,T)[(x_{h,r},T_{h,r})]\\
&=\frac{\partial}{\partial r}\Big|_{r=0}\frac{\partial}{\partial s}\Big|_{s=0}\mathbb{E}_{h+s}(x,T)[(x_{h,r},T_{h,r})]\\
&=\frac{\partial}{\partial r}\Big|_{r=0}T_{h,r}=b.
\end{aligned}
\end{equation}
By taking into account Equation~\eqref{eq:cylider-derivative} we have 
\begin{equation}\label{eq:not-in-ker}
d^2\eh(x,T)[(\xi_h,T'(h)),(\xi,b)]=-b
\end{equation}
for every $(\xi,b)\in \mathcal H(x)\times \R$. Taking $b=0$ and $(\xi,b)=(\xi_h,T'(h))$ respectively, we have 
\begin{equation}\label{eq:one-more-vector}
d^2\eh(x,T)[(\xi_h,T'(h)),(\xi,0)]=0,\quad d^2\eh(x,T)[(\xi_h,T'(h)),(\xi_h,T'(h))]=-T'(h).
\end{equation}
Let us identify $\mathcal H(x)$ with $\mathcal H(x)\times \{0\}$ and  we denote the Hessian of $\eh$ by $\nabla^2\eh(x,T)$. So, $\ker d^2\eh(x,T)=\ker \nabla^2\eh(x,T)$. We now set 
\begin{equation}
\mathcal H^\bot(x)=\{(\xi,b)\in\mathcal H(x)\times \R \ | \ d^2\eh(x,T)[(\xi,b),(\eta,0)]=0,\ \forall (\eta,0)\in\mathcal H(x)\}.
\end{equation} 
The following result holds. 
\begin{lem}\label{lem:nondeg-cylinder}
	Under above notations, we get 
	\begin{equation}
\ker d^2\eh(x,T)\subset \mathcal H(x),\quad \textrm{ and } \quad 	\mathcal H^\bot(x)=\ker d^2\eh(x,T)\oplus\R(\xi_h,T'(h)).
	\end{equation}
\end{lem}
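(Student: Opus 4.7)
The plan is to build everything on the key identity \eqref{eq:not-in-ker} derived just above the lemma, namely $d^2\eh(x,T)[(\xi_h,T'(h)),(\xi,b)]=-b$ for every $(\xi,b)\in\mathcal H(x)\times\R$, together with the symmetry of $d^2\eh(x,T)$. For the first inclusion $\ker d^2\eh(x,T)\subset \mathcal H(x)$, I would take a generic $(\xi,b)$ in the kernel and test against $(\xi_h,T'(h))$: the pairing is zero, but by symmetry and the identity it also equals $-b$, hence $b=0$ and $(\xi,b)$ belongs to $\mathcal H(x)\times\{0\}\cong \mathcal H(x)$.

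For the decomposition, the two easy inclusions $\ker d^2\eh(x,T)\subset \mathcal H^\bot(x)$ and $\R(\xi_h,T'(h))\subset \mathcal H^\bot(x)$ are free: the first is tautological, while the second is exactly the content of \eqref{eq:one-more-vector}. Directness of the sum would then follow by evaluating the same identity at $(\xi,b)=(0,1)$, which gives $d^2\eh(x,T)[(\xi_h,T'(h)),(0,1)]=-1\neq 0$, so $(\xi_h,T'(h))$ cannot lie in the kernel.

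The main obstacle is the reverse inclusion $\mathcal H^\bot(x)\subset \ker d^2\eh(x,T)+\R(\xi_h,T'(h))$. Given $(\xi,b)\in \mathcal H^\bot(x)$, and assuming first the non-degenerate case $T'(h)\neq 0$, I would set $\alpha\=b/T'(h)$ and $(\xi',0)\=(\xi,b)-\alpha(\xi_h,T'(h))$; since $\mathcal H^\bot(x)$ is a linear subspace, $d^2\eh(x,T)[(\xi',0),(\eta,0)]=0$ for every $\eta$ is automatic. The delicate point is to obtain also $d^2\eh(x,T)[(\xi',0),(0,1)]=0$, because membership in $\mathcal H^\bot(x)$ only controls horizontal test vectors. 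My plan here is to apply the key identity once more but with $(\xi_h,T'(h))$ in the \emph{left} slot and $(\xi',0)$ in the right, then split $(\xi_h,T'(h))=(\xi_h,0)+T'(h)(0,1)$: since $(\xi_h,0)$ is horizontal and $(\xi',0)\in \mathcal H^\bot(x)$, the first term drops out and one is left with $T'(h)\,d^2\eh(x,T)[(0,1),(\xi',0)]=0$, from which non-degeneracy of the cylinder yields the desired vanishing. The sub-case $T'(h)=0$ can be handled similarly by first pairing $(\xi,b)\in \mathcal H^\bot(x)$ with $(\xi_h,0)=(\xi_h,T'(h))$ to force $b=0$, and then selecting $\alpha$ so as to kill the residual pairing with $(0,1)$.
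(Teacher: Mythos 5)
Your argument is correct, but it follows a genuinely different route from the paper. The paper proves $\ker d^2\eh(x,T)\subset\mathcal H(x)$ by contradiction: if $\mathcal H(x)+\ker d^2\eh(x,T)$ were all of $\mathcal H(x)\times\R$, then $\mathcal H^\bot(x)$ would coincide with $\ker d^2\eh(x,T)$, contradicting the fact that $(\xi_h,T'(h))$ lies in $\mathcal H^\bot(x)$ but not in the kernel (exactly Equations \eqref{eq:one-more-vector} and \eqref{eq:not-in-ker}); since $\mathcal H(x)$ has codimension $1$, the inclusion follows. The decomposition is then obtained ``softly'': from $\ker d^2\eh(x,T)\oplus\R(\xi_h,T'(h))\subset\mathcal H^\bot(x)$ together with the dimension bound $\dim\bigl(\mathcal H^\bot(x)/\ker d^2\eh(x,T)\bigr)\le 1$, which the paper states without detailed proof. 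You instead argue directly: testing a kernel element against $(\xi_h,T'(h))$ and using the symmetry of the Hessian plus \eqref{eq:not-in-ker} forces $b=0$, which gives the first inclusion without any contradiction argument; and for the reverse inclusion you explicitly subtract the appropriate multiple of $(\xi_h,T'(h))$ from a given element of $\mathcal H^\bot(x)$ and check, again via symmetry and \eqref{eq:not-in-ker}, that the remainder pairs to zero with $(0,1)$ as well, treating $T'(h)\ne 0$ and $T'(h)=0$ separately (both sub-cases check out: in the degenerate case the pairing $d^2\eh(x,T)[(\xi_h,0),(0,1)]=-1$ lets you choose the coefficient $\alpha$). What each approach buys: the paper's proof is shorter but leans on the unproved quotient-dimension bound, whereas yours is longer but self-contained — in effect it proves that bound constructively — and it makes explicit where the symmetry of $d^2\eh(x,T)$ and the identity \eqref{eq:not-in-ker} enter. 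Both proofs ultimately rest on the same two displayed identities, so your argument is a legitimate, slightly more explicit alternative to the one in the paper.
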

\begin{proof}
	We argue by contradiction. If $\mathcal H(x)+\ker d^2\eh(x,T)=\mathcal H(x)\times \R$, then 
	\begin{equation}\label{eq:H-ortho=ker}
			\mathcal H^\bot(x)=(\mathcal H(x)+\ker d^2\eh(x,T))^\bot=(\mathcal H(x)\times \R)^\bot=\ker d^2\eh(x,T).
	\end{equation}
By taking into account Equation~\eqref{eq:one-more-vector}, we get  $(\xi_h,T'(h))\in\mathcal H^\bot(x)$ and by Equation~\eqref{eq:not-in-ker} we know  $(\xi_h,T'(h))\notin\ker d^2\eh(x,T)$ which is in contrast with \eqref{eq:H-ortho=ker}. \\ Therefore, $\mathcal H(x)+\ker d^2\eh(x,T)\neq\mathcal H(x)\times \R$. Since $\dim((\mathcal H(x)\times \R)/\mathcal H(x))=1,$ then  we get $\ker d^2\eh(x,T)\subset \mathcal H(x)$. 

We now observe that $\ker d^2\eh(x,T)\oplus\R(\xi_h,T'(h))\subset \mathcal H^\bot(x)$ and $\dim(\mathcal H^\bot(x)/\ker d^2\eh(x,T))\leq1$, it follows that $\mathcal H^\bot(x)=\ker d^2\eh(x,T)\oplus\R(\xi_h,T'(h))$. This concludes the proof.	
\end{proof}
By invoking Lemma~\ref{lem:nondeg-cylinder}, we get 
	\[
\mathcal H^\bot(x)=	\ker d^2\eh(x,T)\oplus \R(\xi_h,T'(h))\supset \mathcal H(x)\cap \mathcal H^\bot(x)\supset \ker d^2\eh(x,T).
	\] 
	If $T'(h)\neq0$, the $\R(\xi_h,T'(h))\subsetneq \mathcal H(x)$.
Thus, we have $\mathcal H(x)\cap\mathcal H^\bot(x)=\ker d^2\eh(x,T)$. 

At the same time, by Lemma \ref{lem:nondeg-cylinder} we have $d^2\eh(x,T)|_{\mathcal H^\bot(x)}=d^2\eh(x,T)|_{\R(\xi_h,T'(h))}$. Then by Equation~\eqref{eq:one-more-vector}, we have 
\begin{equation}
m^-(d^2\eh(x,T)|_{\mathcal H^\bot(x)})=
\left\{
\begin{aligned}
&1\qquad \text{if}\quad T'(h)>0\\
&0\qquad \text{if}\quad T'(h)<0.
\end{aligned}
\right.
\end{equation}
Hence, we have 
\begin{equation}
\begin{aligned}
m^-(d^2\eh(x,T)|_{\mathcal H^\bot(x)})+\dim(\mathcal H(x)\cap\mathcal H^\bot(x))&-\dim(\mathcal H(x)\cap \ker d^2\eh(x,T))\\&=
\left\{
\begin{aligned}
&1\quad \text{if}\quad T'(h)>0\\
&0\quad \text{if}\quad T'(h)<0.
\end{aligned}
\right.
\end{aligned}
\end{equation}
If $T'(h)=0$, then by Equation~\eqref{eq:one-more-vector}, we get that $(\xi_h,T'(h))\in \mathcal H(x)\cap\mathcal H^\bot(x)$. Therefore, we have 
 \[
 \mathcal H(x)\cap\mathcal H^\bot(x)=\ker d^2\eh(x,T)\oplus \R(\xi_h,0)=\mathcal H^\bot (x).\] 
 
 As a result, we have $\dim(\mathcal H(x)\cap\mathcal H^\bot(x))-\dim(\mathcal H(x)\cap \ker d^2\eh(x,T))=1$,  and  
 \[
 d^2\eh(x,T)|_{\mathcal H^\bot(x)}=d^2\eh(x,T)|_{\mathcal H^\bot(x)\cap \mathcal H(x)} =0.
 \] 
 So, we get that if $T'(h)=0$ there holds 
\begin{equation}
m^-(d^2\eh(x,T)|_{\mathcal H^\bot(x)})+\dim(\mathcal H(x)\cap\mathcal H^\bot(x))-\dim(\mathcal H(x)\cap \ker d^2\eh(x,T))=1.
\end{equation}  

Summing up, we have 
\begin{equation}
\begin{aligned}
m^-(d^2\eh(x,T)|_{\mathcal H^\bot(x)})+\dim(\mathcal H(x)\cap\mathcal H^\bot(x))&-\dim(\mathcal H(x)\cap \ker d^2\eh(x,T))\\&=
\left\{
\begin{aligned}
&1\quad \text{if}\quad T'(h)\ge 0\\
&0\quad \text{if}\quad T'(h)<0.
\end{aligned}
\right.
\end{aligned}
\end{equation}
So, in conclusion we get 
\begin{equation}\label{eq:spfl-0-epsilon}
\begin{aligned}
\spfl(\mathcal A(0,\epsilon), \epsilon \in [0,1] ) &=m^-(\mathcal A(0,0)\vert_{W^{\bot}})+\dim(W\cap W^{\bot})-\dim(W\cap \ker \mathcal A(0,0))\\&=
m^-(d^2\eh(x,T)|_{\mathcal H^\bot(x)})+\dim(\mathcal H(x)\cap\mathcal H^\bot(x))\\& -\dim(\mathcal H(x)\cap \ker d^2\eh(x,T))\\&=
\left\{
\begin{aligned}
&1\quad \text{if}\quad T'(h)\ge 0\\
&0\quad \text{if}\quad T'(h)<0
\end{aligned}
\right.,
\end{aligned}
\end{equation}
where $W=H^1_A([0,1], \R^n)$. Summing up all computations, we can give the precise value of the difference between two spectral indices defined in Definition \ref{def:spectral-index}.
\begin{thm}
Under above notations the following equalities hold:
\begin{equation}\label{eq:difference-spfl}
\begin{aligned}
\ispec(x,T)-\ispec^T(x)&=\spfl\big(\mathcal Q_s, s \in [0, s_0]\big)-\spfl\big(\mathcal Q_{s,T}, s \in [0, s_0]\big)\\
&=\spfl (\mathcal A(s,0), s\in[0,s_0] )-\spfl(\mathcal A(s,1) , s\in[0,s_0] )\\
&=\spfl (\mathcal A(0,\epsilon), \epsilon \in [0,1] )  +\spfl(\mathcal A(s_0,1-\epsilon),\epsilon\in[0,1])\\
&=\left\{
\begin{aligned}
&2\quad \text{if}\quad \kappa<0,\ T'(h)\ge 0\\
&1\quad \text{if}\quad \kappa<0, \  T'(h)< 0\quad \text{or}\quad \kappa>0, \  T'(h)\ge 0\\
&0\quad \text{if}\quad \kappa>0,\ T'(h)< 0\\
\end{aligned}
\right.
\end{aligned}
\end{equation}	
\end{thm}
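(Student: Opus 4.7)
The plan is to extract the identity directly from the two-parameter spectral flow decomposition \eqref{eq:homotopy-spfl-sum}, which is already set up in the text via homotopy invariance on the rectangle $[0,s_0]\times[0,1]$. The strategy reduces to identifying each of the four edge contributions and then summing them.

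First I would identify the two horizontal edges. At $\epsilon=0$ the quadratic form path $\mathcal A(s,0)$ is precisely the push-forward under $\Psi$ of $\mathcal Q_s$, so $\spfl(\mathcal A(s,0),s\in[0,s_0])=\ispec(x,T)$ by Definition \ref{def:spectral-index} and Proposition \ref{thm:spectral-index-well-defined}. At $\epsilon=1$ the off-diagonal block $(1-\epsilon)B$ and the bottom-right block $(1-\epsilon)C(s)$ both vanish, so $\mathcal A(s,1)$ is the orthogonal direct sum of $A(s)$ on $H^1_A([0,1],\R^n)$ with the zero operator on the one-dimensional space $\R$. By additivity of the spectral flow under direct sum and the fact that an identically zero path has vanishing spectral flow (its kernel is a constant Lagrangian so the zero axiom applies), one obtains $\spfl(\mathcal A(s,1),s\in[0,s_0])=\ispec^T(x)$.

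Next I would invoke the two vertical-edge computations already carried out above. The right edge $s=s_0$ is handled by Lemma \ref{lem:abstract-As0-spfl}, whose invertibility hypothesis is ensured by Lemma \ref{lem:spectrum-As0}, and yields the value recorded in \eqref{eq:spfl-s0-epsilon}, namely $1$ if $\kappa<0$ and $0$ if $\kappa>0$. The left edge $s=0$ is handled by Lemma \ref{lem:abstract-As-spfl}: the orbit cylinder produces via \eqref{eq:not-in-ker}–\eqref{eq:one-more-vector} the distinguished direction $(\xi_h,T'(h))\in\mathcal H^\perp(x)$, and the ensuing calculation of $m^-(\mathcal A(0,0)|_{W^\perp})$ together with $\dim(W\cap W^\perp)$ and $\dim(W\cap\ker\mathcal A(0,0))$ yields \eqref{eq:spfl-0-epsilon}, equal to $1$ if $T'(h)\geq 0$ and $0$ if $T'(h)<0$.

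Finally, rearranging \eqref{eq:homotopy-spfl-sum} gives
\[
\ispec(x,T)-\ispec^T(x)=\spfl(\mathcal A(0,\epsilon),\epsilon\in[0,1])+\spfl(\mathcal A(s_0,1-\epsilon),\epsilon\in[0,1]),
\]
and the four-case formula is then the immediate tabulation of the sum of the two values above according to the signs of $\kappa$ and $T'(h)$. The delicate point I expect is the identification of the $\epsilon=1$ edge with $\ispec^T(x)$: one has to check that the constant zero block in the $\R$-direction genuinely contributes nothing to the spectral flow, even though it is not invertible at any parameter. This is where the zero axiom of spectral flow for constant-kernel paths, combined with additivity under orthogonal direct sum, must be applied carefully; everything else is bookkeeping of signs.
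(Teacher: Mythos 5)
Your proposal is correct and follows essentially the same route as the paper: the paper's own proof simply combines the rectangle decomposition \eqref{eq:homotopy-spfl-sum} with the two edge computations \eqref{eq:spfl-s0-epsilon} and \eqref{eq:spfl-0-epsilon}, exactly as you do, with the identification of the $\epsilon=0$ and $\epsilon=1$ edges with $\ispec(x,T)$ and $\ispec^T(x)$ (the latter via additivity under direct sum with the constant zero block on $\R$) left implicit. Your extra care on that $\epsilon=1$ identification is a sound elaboration rather than a departure.
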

\begin{proof}
	The proof readily follows by invoking Equations \eqref{eq:homotopy-spfl-sum}-\eqref{eq:spfl-s0-epsilon} and Equation~\eqref{eq:spfl-0-epsilon}. This concludes the proof.
\end{proof}
\begin{rem}
We observe that the main role of orbit cylinder is to ensure the existence of vector $(\xi_h,T'(h))$ in Equation~\eqref{eq:not-in-ker}. A natural problem is to find out some more general conditions to insure the existence of a vector in $\mathcal H^\bot(x)$ but not in $\ker d^2\eh(x,T)$. The bifurcation theory of Hamiltonian system could be the  right direction for answering  this question. 
\end{rem} 
By using  Proposition \ref{thm:spectral-index-morse-index},  the following result holds. 
\begin{cor}\label{cor:morse-difference}
If $L$ is $\mathscr C^2$-strictly convex on $TM$, then the difference between two Morse indices is given by 
\begin{equation}
\iMorse(x,T)-\iMorse^T(x)=
\left\{
\begin{aligned}
&1\qquad \text{if}\quad T'(h)\ge 0\\
&0\qquad \text{if}\quad T'(h)<0
\end{aligned}
\right.
\end{equation}
\end{cor}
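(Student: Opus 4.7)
The plan is to obtain this corollary essentially for free by specializing the spectral-index identity \eqref{eq:difference-spfl} to the strictly convex setting and then translating the spectral indices into Morse indices. So the proof should be very short; no new machinery is needed beyond what has already been established.

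First I would observe that assumption (L1) forces $\bar P(t)=\partial_{vv}L(x(t),x'(t)/T)>0$ as a quadratic form for every $t\in[0,1]$. Since a periodic orbit of the free period Lagrangian action is non-constant (so that $x'(t)\neq 0$ on a dense set, and in fact everywhere once combined with the Euler--Lagrange equation and conservation of energy), it follows that
\[
\kappa(t)=\langle \bar P(t)\,x'(t),x'(t)\rangle_g>0\qquad\text{for every }t.
\]
In particular $(x,T)$ is non-null and $L$-positive, so the spectral indices $\ispec(x,T)$ and $\ispec^T(x)$ are both well-defined by Proposition~\ref{thm:spectral-index-well-defined}.

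Next I would invoke Proposition~\ref{thm:spectral-index-morse-index}, which under (L1) asserts that
\[
\ispec(x,T)=\iMorse(x,T)\qquad\text{and}\qquad \ispec^T(x)=\iMorse^T(x),
\]
both being finite. Subtracting the second identity from the first gives
\[
\iMorse(x,T)-\iMorse^T(x)=\ispec(x,T)-\ispec^T(x).
\]

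Finally, I would feed the sign of $\kappa$ into the identity \eqref{eq:difference-spfl}. Since $\kappa>0$ identically along the orbit, the three cases listed there collapse to the two cases distinguished by the sign of $T'(h)$, producing the value $1$ when $T'(h)\ge 0$ and the value $0$ when $T'(h)<0$. This is precisely the statement of the corollary, so no further argument is required. There is no genuine obstacle here; the only thing one has to be careful about is to verify that strict convexity, together with the non-constancy of the periodic orbit, really does yield $\kappa(t)>0$ pointwise and hence the $L$-positive regime of \eqref{eq:difference-spfl}.
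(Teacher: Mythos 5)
Your argument is correct and follows essentially the same route as the paper's own proof: strict convexity gives $\kappa(t)=\langle \bar P(t)x'(t),x'(t)\rangle_g>0$, then Proposition~\ref{thm:spectral-index-morse-index} identifies the spectral indices with the Morse indices, and Equation~\eqref{eq:difference-spfl} with $\kappa>0$ yields the stated dichotomy in the sign of $T'(h)$. The only extra content in your write-up is the remark justifying $x'(t)\neq 0$; the paper simply works under the standing non-nullness hypothesis (which in the convex case is equivalent to $x'(t)\neq 0$ for all $t$), and that is the cleaner way to phrase it, since non-constancy alone does not rule out instants where $x'$ vanishes.
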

\begin{proof}
	Since $L$ is $\mathscr C^2$-strictly convex, then $\kappa=\langle Px',x'\rangle>0$.  By Proposition \ref{thm:spectral-index-morse-index} and Equation~\eqref{eq:difference-spfl}, we conclude the proof.
\end{proof}
\begin{rem}
	Here we point out that if $(x,T)$ is a minimizer of system \eqref{eq:fixed period functional} when $L$ is $\mathscr C^2$-strictly convex, then we must have $\iMorse(x,T)=0$. By Corollary \ref{cor:morse-difference} there must hold $T'(h)<0$. Or vice versa, if $T'(h)\ge 0$, then $(x,T)$ cannot be a minimizer.
\end{rem}


\section{Linear instability and proof of the main result}\label{sec:linear instability}

In this section we recall some well-known results about  the fixed period problem. We refer the interested reader to \cite{PWY21} for the complete proofs.


\subsection{Hamiltonian system and geometrical index}

It is well-known that  under the assumption (N1) the Legendre transform 
\begin{equation}
\mathscr L_L:TM \to  T^*M, \qquad (q,v)\mapsto\Big(q,DL(q,v)\big\vert_{T^v_{(q,v)}TM}\Big)
\end{equation}
is a local smooth diffeomorphism.  The Fenchel transform of $L$ is the autonomous Hamiltonian on $T^*M$ 
\begin{equation}
H(q,p)=\max_{v \in T_qM}\big(p[v]-L(q,v)\big)=p[v(q,p)]- L(q,v(q,p)),	
\end{equation}
for every $(q,p) \in T^*M$, where the map $v$ is a component of the fiber-preserving diffeomorphism 
\[
\mathscr L_L^{-1}: T^*M \to TM, \qquad (q,p)\mapsto (q,v(q,p))
\]
the inverse of $\mathscr L_L$.

By the above Legendre transform, the Euler-Lagrange Equation \eqref{eq:E-L equation-fixed} is changed into the following Hamiltonian equation:
\begin{equation}\label{eq:hamiltonian-manifold}
z_x'(t)=J\nabla H(z_x(t)).
\end{equation}

By trivializing the pull-back bundle $x^*(TM)$ over $TM$ through the frame $\EE$ defined in Subsection \ref{subsec:ortho-triv}, the Equation \eqref{eq:Sturm-manifolds} is changed into 
\begin{equation}\label{eq:Sturm-Euclidian}
\begin{cases}
-\ddt\big(\frac 1 TP(t) u'(t)+ Q(t)u(t)\big)+ \trasp{Q}(t) u'(t)+TR(t)u(t)=0, \qquad t \in (0,1)\\
\\
u(0)=Au(1), \qquad u'(0)= Au'(1).
\end{cases}	
\end{equation}
By setting   $y(t)=\frac 1 TP(t) u'(t)+Q(t)u(t)$ and $z(t)=\trasp{(y(t),u(t))}$ we finally get 
\begin{multline}\label{eq:Hamilton system}
\begin{cases}
z'(t)=JB(t)z(t), \qquad t \in [0,1]\\
z(0)=A_d z(1)
\end{cases}\quad 
\textrm{ where } \\
B(t)\=\begin{bmatrix}T P^{-1}(t) &- TP^{-1}(t)Q(t)\\-TQ(t)P^{-1}(t)& T\trasp{Q}(t)P^{-1}(t)Q(t)-TR(t) \end{bmatrix} 
\end{multline} 
and $A_d$ has been defined in Equation~\eqref{eq:Ad}.

In the standard symplectic space $(\R^{2n}, \omega)$, we denote by $J$ the standard symplectic matrix defined by $J=\begin{bmatrix} 0&-\Id\\ \Id &0\end{bmatrix}$. Thus the symplectic form $\omega$ can be  represented with respect to the Euclidean product $\langle\cdot, \cdot\rangle$ by $J$ as follows $\omega(z_1,z_2)=\langle J z_1,z_2\rangle$ for every $z_1, z_2 \in \R^{2n}$. 

Now, given $M \in \Sp(2n, \R)$, we denote by $\Gr(M)=\{(x,Mx)|x\in \R^{2n}\}$ its graph and we recall that  $\Gr(M)$ is a Lagrangian subspace of the symplectic  space $(\R^{2n} \times \R^{2n}, -\omega \times \omega)$.  
\begin{defn}\label{def:geometrical-index-NA}
	Let $x$ be a $1$-periodic solution of Equation~\eqref{eq:E-L equation-fixed},  $z_x$ be the solution of corresponding Hamiltonian equation and let us consider the path 
	\begin{equation}
	\gamma_\Phi :[0,1] \to \Sp(2n, \R) \quad \textrm{ given  by }\quad  \gamma_\Phi(t)\= A_d[\Phi^E(t)]^{-1} D \phi_H^t(z_x(0))\Phi^E(0).
	\end{equation}
	We define the {\em geometrical index of $x$\/} as follows
	\begin{equation}\label{eq:geo-na}
	\igeo(x)\=\iCLM(\Delta,\Gr(\gamma_\Phi(t)), t\in[0,1])
	\end{equation}
	where the (RHS) in Equation~\eqref{eq:definition of maslov index}  denotes  the $\iCLM$ intersection index between the Lagrangian path $t \mapsto \Gr(\gamma_\Phi(t))$ and the Lagrangian path $\Delta\=\Gr(\Id)$.
\end{defn}
Let $x$ be a $1$-periodic solution of Equation~\eqref{eq:E-L equation-fixed} and  $z_x$ be the solution of corresponding Hamiltonian equation, we can define the {\em linearized Poincaré map of $z_x$\/} as follows. 
\begin{multline}\label{eq:linearized-poincare-map}
\mathfrak P_{z_x}:  T_{x(0)}M\oplus T_{x(0)}^*M \to 
T_{x(0)}M\oplus T_{x(0)}^*M  \textrm{ is   given by 	}\\
\mathfrak P_{z_x}(\alpha_0, \delta_0)\=\bar A_d\trasp{\Big( \zeta(T),\frac 1 T\bar P(T) \Ddt \zeta(T)+ \bar Q(T)\zeta(T)\Big)}\\ \quad \textrm{ for } \quad 
\bar A_d \= \begin{bmatrix}
\bar A & 0 \\ 0 & \bar A
\end{bmatrix}
\end{multline}
where $\zeta$ is the unique vector field along $x$ such that $\zeta(0)=\alpha_0$ and  $\frac 1 T\bar P(0) \Ddt \zeta(0)+ \bar Q(0)\zeta(0)=\delta_0$. Fixed points of $\mathfrak P_{z_x}$ corresponds to periodic vector fields along $z_x$. 

By pulling back the linearized Poincaré map defined in Equation~\eqref{eq:linearized-poincare-map} through the unitary  trivialization $\Phi^\EE$ of $z_x(TT^*M)$ over $[0,1]$ we get the map 
\begin{equation}\label{eq:linearized-poincare-euclidea}
P^\EE: \R^n \oplus \R^n \to \R^n \oplus \R^n \textrm{ defined by } 
P^\EE(y_0, u_0)=A_d\trasp{\big(\frac 1 TPu'(T)+ Q u(T),  u(T)\big)}
\end{equation}
where $z(t)=\big(y(t),u(t)\big)$ is the unique solution of the Hamiltonian system given in Equation~\eqref{eq:Hamilton system} such that $z(0)=(y_0, u_0)$. 

Denoting by  $t \mapsto\psi(t)$ the fundamental solution of (linear) Hamiltonian system  given in Equation~\eqref{eq:Hamilton system}, then we get the geometrical index given in Definition \ref{def:geometrical-index-NA} reduces to  
\begin{equation}\label{eq:definition of maslov index}
\igeo(x)\=\iCLM(\Delta,\Gr(A_d\psi(t)), t\in[0,1]).
\end{equation}
Moreover, the linearized Poincaré map can be given by the symplectic matrix $A_d\psi(1)$.

By choosing a suitable coordinates and trivialization we can split $A_d\psi(t)$ into following form:
\begin{equation}\label{eq:poincare-splitting}
A_d\psi(t)=\begin{bmatrix}
1&0&0\\-tT'(h)&1&0\\0&0&P_x(t)
\end{bmatrix},
\end{equation}
where $P_x(t)$ is a path of $2(n-1)\times 2(n-1)$ symplectic matrices. It is referred to \cite[Page 104-105]{MP10}.
 \begin{defn}\label{def:spectral-stability}
	Under above notations, we term $z_x$ {\em spectrally stable\/} if the spectrum $\spec({P_{x}(1)}) \subset \U$ where $\U\subset \C$ denotes the unit circle of the complex plane. Furthermore, if $ P_{x}(1)$ is also semi-simple, then $z_x$ is termed {\em linearly stable\/}.
\end{defn}
Denote $\gamma_1(t)=\Big\{\begin{bmatrix}
1&0\\-tT'(h)&1
\end{bmatrix}, t\in[0,1]\Big\}$ and $\gamma_2(t)=\{P_x(t) \ | \ t\in[0,1]   \}$, then by \eqref{eq:special-maslov-index} we have 
\begin{equation}\label{eq:difference-maslov}
\begin{aligned}
\igeo(x)&=\iCLM(\Delta,\Gr(\gamma_1(t)), t\in[0,1])+\iCLM(\Delta,\Gr(\gamma_2(t)),t\in[0,1])\\&=\iCLM(\Delta,\Gr(\gamma_2(t)),t\in[0,1])+\left\{
\begin{aligned}
&1,\qquad \text{if}\quad T'(h)<0;\\
&0,\qquad \text{if}\quad T'(h)\ge 0.
\end{aligned}
\right.
\end{aligned}
\end{equation}

Next result is one of the two main ingredients that we need for proving our main results and in particular relates the parity of the $\iomega{1}$-index to the linear instability of the periodic orbit. 
\begin{lem}\label{lem:instability by maslov index}
The following implication holds
	\[ 
	\iCLM(\Delta,\Gr(\gamma_2(t)),t\in[0,1]) \textrm{ is odd } \Rightarrow x \textrm{ is linearly unstable }
	\]
\end{lem}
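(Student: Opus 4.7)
The plan is to argue by contrapositive: I assume $x$ is linearly stable and deduce that $\iCLM(\Delta, \Gr(\gamma_2(t)), t \in [0,1])$ must be even. Since $\gamma_2$ arises from the fundamental solution of a linear Hamiltonian system on the reduced symplectic space of dimension $2(n-1)$, one has $\gamma_2(0) = I_{2(n-1)}$, so the CLM index in question is a Maslov-type index of a symplectic path starting at the identity. The key tool I would invoke is the classical parity formula for such indices: whenever the endpoint satisfies $\det(\gamma_2(1) - I) \neq 0$, the parity of $\iCLM(\Delta, \Gr(\gamma_2))$ is determined by the sign of $\det(\gamma_2(1) - I)$, with a positive determinant corresponding to even parity.

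First I would analyze $\det(P_x(1) - I)$ under the linear stability hypothesis. Since $P_x(1) = \gamma_2(1)$ is semi-simple with spectrum contained in $\U$, its eigenvalues split into complex conjugate pairs $e^{\pm i\theta_j}$ with $\theta_j \in (0,\pi)$, the real eigenvalue $-1$ with even algebraic multiplicity (forced by the reciprocal structure of the characteristic polynomial of a symplectic matrix), and possibly the real eigenvalue $+1$. Each pair $e^{\pm i \theta_j}$ contributes the factor $|e^{i\theta_j}-1|^2 = 2(1-\cos\theta_j) > 0$ to $\det(P_x(1)-I)$, while a $(-1)$-block of even multiplicity contributes a positive power of $(-2)$. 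Therefore $\det(P_x(1) - I) \geq 0$, with strict positivity exactly when $1 \notin \sigma(P_x(1))$.

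In the non-degenerate case $1 \notin \sigma(P_x(1))$ the parity formula immediately yields that $\iCLM(\Delta, \Gr(\gamma_2))$ is even. For the degenerate case, when $1$ is a semi-simple eigenvalue of $P_x(1)$ of even multiplicity, I would perturb $\gamma_2$ near its right endpoint within the spectrally stable stratum by pushing each $1$-eigenvalue to a nearby conjugate pair $e^{\pm i\epsilon}$ on $\U$. Concatenating a short symplectic path realizing this perturbation with the original $\gamma_2$ produces a path whose endpoint lies in the non-degenerate stratum, to which the previous step applies. The CLM index of the concatenation differs from the original by a correction computable from the crossing form of the perturbing path; the key observation is that, on the generalized $1$-eigenspace (whose dimension is forced to be even by the reciprocal structure of $\sigma(P_x(1))$), this crossing form is non-degenerate thanks to semi-simplicity and its signature is therefore even.

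The main obstacle I anticipate is precisely the degenerate case: the parity formula is cleanest when the endpoint avoids $\det(\cdot - I) = 0$, and one must carefully check that the endpoint perturbation modifies $\iCLM$ by an even quantity. This depends on the even-dimensionality of the $1$-eigenspace together with the semi-simplicity assumption, which together guarantee that the crossing-form correction contributes an even signature. Once this is in place, combining the two cases shows that linear stability forces $\iCLM(\Delta, \Gr(\gamma_2))$ to be even, which is exactly the contrapositive of the stated implication.
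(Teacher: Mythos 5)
Your overall strategy (contrapositive, determinant--parity in the nondegenerate case, endpoint perturbation in the degenerate case) is reasonable, but as written it has two genuine gaps. First, the starting point: you assert $\gamma_2(0)=\Id_{2(n-1)}$ because $\gamma_2$ comes from a fundamental solution, but by \eqref{eq:poincare-splitting} the path being split is $A_d\psi(t)$, which at $t=0$ equals $A_d$, not the identity; hence $\gamma_2(0)=P_x(0)$ is only the reduced block of $A_d$, an orthogonal symplectic matrix, and in the non orientation preserving case (which the main theorem expressly covers) it is genuinely different from $\Id$ and may itself have the eigenvalue $1$. The ``classical parity formula'' you invoke, namely $(-1)^{\iCLM(\Delta,\Gr(\gamma_2))}=\sign\det\big(\gamma_2(1)-\Id\big)$, is a statement about paths issuing from the identity (it is the Conley--Zehnder parity combined with $\iCLM(\Delta,\Gr\,\psi)=\mu_{CZ}(\psi)+n$); for a general initial matrix the parity also depends on the initial endpoint, so the very degeneracy problem you fight at $t=1$ reappears, unaddressed, at $t=0$.

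Second, and more seriously, your key step in the degenerate case confuses signature with Morse co-index. By the convention \eqref{eq:iclm-crossings} and path additivity, appending a short path $\sigma$ issuing from $P_x(1)$ whose only crossing instant is its initial one changes $\iCLM$ by $\coiMor(\Gamma_\sigma)$, the \emph{co-index} of the crossing form of $\sigma$ at that instant, not by its signature; nondegeneracy of a form on the even-dimensional space $\ker(P_x(1)-\Id)$ forces the signature to be even but not the co-index (signature $0$ on a $2$-dimensional block has co-index $1$). Moreover the nondegeneracy of $\Gamma_\sigma$ is a property of the chosen perturbation (of $\dot\sigma$), not a consequence of semisimplicity of $P_x(1)$. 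The step can be repaired: semisimplicity gives that $\omega$ restricted to $V:=\ker(P_x(1)-\Id)$ is nondegenerate, so one may take $\sigma(s)=P_x(1)e^{s\epsilon \hat J_1}$ with $J_1$ an $\omega$-compatible complex structure on $V$ extended by zero; then $\Gamma_\sigma(v)=\epsilon\,\omega(v,J_1v)>0$ is positive definite, the correction equals $\dim V$, which is even, and the argument closes (the same device fixes a degenerate start). Note that the paper's own proof, delegated to \cite[Lemma 3.15]{PWY21}, follows a different and cleaner route already prepared in the appendix: since $\gamma_2(0)$ is orthogonal symplectic and (by assumption) $P_x(1)$ is linearly stable, both perturbed endpoints lie in $\Sp(2(n-1),\R)^+$ by Lemma \ref{lem:linear stable is in positive side}, so the parity characterization of Lemma \ref{lem:parity property} yields evenness directly, with no surgery on the path and no separate treatment of the eigenvalue $1$.
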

\begin{proof}
It is referred to the proof of \cite[Lemma 3.15]{PWY21}.

\end{proof}

In \cite[Equation 4.25]{PWY21} we give the precise relationship between $\igeo(x)$ and $\ispec^T(x)$:
\begin{equation}\label{eq:relation-geo-spec}
\igeo(x)=\ispec^T(x)+\dim\ker(A-I).
\end{equation}

\subsection{Proof of Main Theorem}

\begin{proof}[Proof of Theorem \ref{thm:main-instability}]
	
We prove only the (contrapositive of) the first statement in Theorem \ref{thm:main-instability}, being the others completely analogous. Thus, we aim to prove that
\[
\textrm{ if } x  \textrm{ is L-positive, orientation preserving and linearly stable } \quad \Rightarrow\quad  \ispec(x,T) +n \textrm{ is odd. }
\]
First of all, we have
\begin{equation}\label{eq:n+ispec}
\begin{aligned}
n+\ispec(x,T)&=n+(\ispec(x,T)-\ispec^T(x))+\ispec^T(x)\\
&=(n-\dim\ker(A-I))+(\ispec(x,T)-\ispec^T(x))+(\ispec^T(x)+\dim\ker(A-I))\\
&=(n-\dim\ker(A-I))+(\ispec(x,T)-\ispec^T(x))+
\igeo(x)\\
&=(n-\dim\ker(A-I))+(\ispec(x,T)-\ispec^T(x))+(\igeo(x)-\iCLM(\Delta,\Gr(\gamma_2(t))))\\
&\qquad \qquad +\iCLM(\Delta,\Gr(\gamma_2(t))).
\end{aligned}
\end{equation}

Being $x$  orientation  preserving (by assumption), then $\det A=1$ and being  $A$ also  orthogonal, then we get that 
\begin{itemize}
	\item $n \textrm{ even } \Rightarrow \dim\ker(A-\Id) \textrm{ even}$;
	\item $n \textrm{ odd } \Rightarrow \dim\ker(A-\Id) \textrm{ odd}$.
\end{itemize}
So, in both cases  we have $n-\dim\ker(A-\Id)$ is even.  

Since $x$ is $L$-Positive, then $\kappa(t)>0$. By equations~\eqref{eq:difference-spfl}-\eqref{eq:difference-maslov}, we have 
\begin{itemize}
	\item $T'(h) \ge 0  \Rightarrow \ispec(x,T)-\ispec^T(x) \textrm{ odd }$ and $\igeo(x)-\iCLM(\Delta,\Gr(\gamma_2(t))) \textrm{ even }$;
	\item $T'(h)<0  \Rightarrow \ispec(x,T)-\ispec^T(x) \textrm{ even}$ and $\igeo(x)-\iCLM(\Delta,\Gr(\gamma_2(t))) \textrm{ odd }$.
\end{itemize}
So, in both cases  we have $(\ispec(x,T)-\ispec^T(x))+(\igeo(x)-\iCLM(\Delta,\Gr(\gamma_2(t))))$ is odd. 

Now, if $x$ is linear stable, then by taking into account  Lemma~\ref{lem:instability by maslov index}, we get that $\iCLM(\Delta,\Gr(\gamma_2(t)))$ is even. Then, by Equation~\eqref{eq:n+ispec} we finally get that $n+\ispec(x,T)$ is odd. This concludes the proof. 
\end{proof}

\appendix

\appendix
%
\section{A symplectic excursion on the Maslov index}\label{sec:Maslov}

The purpose of this Section is to provide the  symplectic  preliminaries used in the paper.  In Subsection \ref{subsec:Maslovindexpath} the main 
properties  of the intersection number for curves of Lagrangian subspaces with respect to a  distinguished one are collected and the {\em (relative) Maslov index\/} is 
defined.  Our basic references are \cite{PPT04,CLM94, GPP04, RS93,MPP05,MPP07}.

\subsection{A quick recap on the the CLM-index}\label{subsec:Maslovindexpath}

Given a $2n$-dimensional (real) symplectic space $(V,\omega)$, a {\em 
Lagrangian 
subspace\/} of $V$ is an $n$-dimensional subspace $L \subset V$ such that $L = 
L^\omega$ where $L^\omega$ denotes the {\em symplectic orthogonal\/}, i.e. the 
orthogonal 
with respect to the symplectic structure. 
We denote by $ \Lagr= \Lagr(V,\omega)$ the {\em Lagrangian Grassmannian of 
$(V,\omega)$\/}, namely the set of all Lagrangian subspaces of $(V, \omega)$
\[
\Lagr(V,\omega)\=\Set{L \subset V| L= L^{\omega}}.
\]
It is well-known that $\Lagr(V,\omega)$ is a manifold. For each $L_0 \in \Lagr$, 
let 
\[
\Lagr^k(L_0) \= \Set{L \in \Lagr(V,\omega) | \dim\big(L \cap L_0\big) =k } 
\qquad k=0,\dots,n.
\]
Each $\Lagr^k(L_0)$ is a real compact, connected submanifold of codimension 
$k(k+1)/2$. The topological closure 
of $\Lagr^1(L_0)$  is the {\em Maslov cycle\/} that can be also described as 
follows
\[
 \Sigma(L_0)\= \bigcup_{k=1}^n \Lagr^k(L_0)
\]
The top-stratum $\Lagr^1(L_0)$ is co-oriented meaning that it has a 
transverse orientation. To be 
more precise, for each $L \in \Lagr^1(L_0)$, the path of Lagrangian subspaces 
$(-\delta, \delta) \mapsto e^{tJ} L$ cross $\Lagr^1(L_0)$ transversally, and as 
$t$ increases the path points to the transverse direction. Thus  the Maslov cycle is two-sidedly embedded in 
$\Lagr(V,\omega)$. Based on the topological properties of the Lagrangian 
Grassmannian manifold, 
it is possible to define a fixed endpoints homotopy invariant called {\em Maslov 
index\/}.

\begin{defn}\label{def:Maslov-index}
Let $L_0 \in \Lagr(V,\omega)$ and let $\ell:[0,1] \to \Lagr(V, \omega)$ be a 
continuous path. We 
define the {\em Maslov index\/} $\iCLM$ as follows:
\[
 \iCLM(L_0, \ell(t); t \in[a,b])\= \left[e^{-\varepsilon J}\, \ell(t): 
\Sigma(L_0)\right]
\]
where the right hand-side denotes the intersection number and $0 < \varepsilon 
<<1$.
\end{defn}
For further reference we refer the interested reader to \cite{CLM94} and references therein. 
\begin{rem}
 It is worth noticing that for $\varepsilon>0$ small enough, the Lagrangian 
subspaces 
 $e^{-\varepsilon J} \ell(a)$ and $e^{-\varepsilon J} \ell(b)$ are off the 
singular cycle. 
\end{rem}
One efficient way to compute the Maslov index, was introduced by authors in 
\cite{RS93} via 
crossing forms. Let $\ell$ be a $\mathscr C^1$-curve of Lagrangian subspaces 
such that 
$\ell(0)= L$ and let $W$ be a fixed Lagrangian subspace transversal to $L$. For 
$v \in L$ and 
small enough $t$, let $w(t) \in W$ be such that $v+w(t) \in \ell(t)$.  Then the 
form 
\[
 Q(v)= \dfrac{d}{dt}\Big\vert_{t=0} \omega \big(v, w(t)\big)
\]
is independent on the choice of $W$. A {\em crossing instant\/} for $\ell$ is an 
instant $t \in [a,b]$ 
such that $\ell(t)$ intersects $W$ nontrivially. At each crossing instant, we 
define the 
crossing form as 
\[
 \Gamma\big(\ell(t), W, t \big)= Q|_{\ell(t)\cap W}.
\]
A crossing is termed {\em regular\/} if the crossing form is non-degenerate. If 
$\ell$ is regular meaning that 
it has only regular crossings, then the Maslov index is equal to 
\begin{equation}\label{eq:iclm-crossings}
 \iCLM\big(W, \ell(t); t \in [a,b]\big) = \coiMor\big(\Gamma(\ell(a), W; a)\big)+ 
\sum_{a<t<b} 
 \sgn\big(\Gamma(\ell(t), W; t\big)- \iMor\big(\Gamma(\ell(b), W; b\big)
\end{equation}
where the summation runs over all crossings $t \in (a,b)$ and $\coiMor, \iMor$ 
are the dimensions  of 
the positive and negative spectral spaces, respectively and $\sgn\= 
\coiMor-\iMor$ is the  signature. 
(We refer the interested reader to \cite{LZ00} and \cite[Equation (2.15)]{HS09}). 
We close this section by 
recalling some useful 
properties of the Maslov index. \\
\begin{itemize}
\item[]{\bf Property I (Reparametrization invariance)\/}. Let $\psi:[a,b] \to 
[c,d]$ be a 
continuous and piecewise smooth function with $\psi(a)=c$ and $\psi(b)=d$, then 
\[
 \iCLM\big(W, \ell(t)\big)= \iCLM(W, \ell(\psi(t))\big). 
\]
\item[] {\bf Property II (Homotopy invariance with respect to the ends)\/}. For 
any $s \in [0,1]$, 
let $s\mapsto \ell(s,\cdot)$ be a continuous family of Lagrangian paths 
parametrised on $[a,b]$ and 
such that $\dim\big(\ell(s,a)\cap W\big)$ and $\dim\big(\ell(s,b)\cap W\big)$ 
are constants, then 
\[
 \iCLM\big(W, \ell(0,t);t \in [a,b]\big)=\iCLM\big(W, \ell(1,t); t \in 
[a,b]\big).
\]
\item[]{\bf Property III (Path additivity)\/}. If $a<c<b$, then
\[
 \iCLM\big(W, \ell(t);t \in [a,b]\big)=\iCLM\big(W, \ell(t); t \in [a,c]\big)+
 \iCLM\big(W, \ell(t); t \in [c,b]\big) 
\]
\item[]{\bf Property IV (Symplectic invariance)\/}. Let $\Phi:[a,b] \to \Sp(2n, 
\R)$. Then 
\[
 \iCLM\big(W, \ell(t);t \in [a,b]\big)= \iCLM\big(\Phi(t)W, \Phi(t)\ell(t); t 
\in [a,b]\big).
\]
\end{itemize}

For the special symplectic path $\gamma(t)=\begin{bmatrix}
M_{11}(t)&0\\M_{21}(t)&M_{22}(t)
\end{bmatrix}, t\in[0,T]$, there is a very useful formula to compute it's Maslov index \cite[Theorem 2.2]{Zhu06}. Here we only give the simplified version. Let $V$ be a subspace of $\C^{2n}$, define
\begin{equation}
V^I=\{x\in \C^{2n}\ |\ \omega(x,y)=0 \ \forall y\in V \}, W_I(V)=\{\trasp{(x,u,y,v)}\in\C^{4n}\ |\ \trasp{(x,y)}\in V^J, \trasp{(u,v)}\in V\}.
\end{equation}
Then there holds
\begin{equation}\label{eq:zhu's formula to compute maslov index}
\begin{aligned}
&\mu^{CLM}(W_I(V),Gr(\gamma(t)))\\
&=m^+(M_{1,1}(T)^*M_{2,1}(T)|_{S(T)})-m^+(M_{1,1}(0)^*M_{2,1}(0)|_{S(0)})+\dim S(0)-\dim S(T),
\end{aligned}
\end{equation}
where $m^+$ denotes the Morse positive index and $S(t)=\{x\in\C^n\ | \ \trasp{(x,M_{1,1}x)}\in V^I\}$. Please note that in our situation we take $K,R$ in  \cite[Theorem 2.2]{Zhu06} as $I$ and $V$ respectively.

Take $V=\{\trasp{(x,x)}\ | \ x\in\R\}$, then $V^I=V$ and $\Delta\=Gr(I)=W_I(V)$. Let $\gamma(t)=\Big\{\begin{bmatrix}
1&0\\tT_0&1
\end{bmatrix}, t\in[0,1]\Big\}$ where $T_0$ is a given real constant, then we have 
\begin{equation}\label{eq:special-maslov-index}
\iCLM\big(\Delta, \gamma(t);t \in [0,1]\big)=
\left\{
\begin{aligned}
&1,\qquad \text{if}\quad T_0>0;\\
&0,\qquad \text{if}\quad T_0\leq0.
\end{aligned}
\right.
\end{equation}
In fact, note that in this case we have $S(0)=S(T)=\R$ and $M_{21}(0)=0,M_{21}(T)=T_0$, then it is just the consequence of \eqref{eq:zhu's formula to compute maslov index}.


\subsection{The symplectic group and the Maslov-type index}

In the standard symplectic space $(\R^{2n}, \omega)$ we denote by $J$ the standard symplectic matrix defined by $J=\begin{bmatrix} 0&-\Id\\ \Id &0\end{bmatrix}$. The symplectic form $\omega$ can be  represented with respect to the Euclidean product $\langle\cdot, \cdot\rangle$ by $J$ as follows $\omega(z_1,z_2)=\langle J z_1,z_2\rangle$ for every $z_1, z_2 \in \R^{2n}$.  We consider the 1-codimensional (algebraic) subvariety 
\[
\Sp(2n, \R)^{0}\=\{M\in \Sp(2n, \R)| \det(M-\Id)=0\} \subset \Sp(2n, \R)
\] 
and let us define
\[
\Sp(2n,\R)^{*}=\Sp(2n,\R)\backslash \Sp(2n, \R)^{0}=\Sp(2n,
\R)^{+}\cup
\Sp(2n, \R)^{-}
\]
where 
\begin{multline}
\Sp(2n,\R)^+\=\{M\in \Sp(2n,\R)| \det(M-\Id)>0\} \quad \textrm{ and }\\
\Sp(2n,\R)^- \=\{M\in \Sp(2n,\R)
|\det(M-\Id)<0\}.
\end{multline}

For any $M\in \Sp(2n,\R)^{0}$, $\Sp(2n,\R)^{0}$ is
co-oriented at the point $M$
 by choosing  as positive direction the direction determined by
 $\frac{d}{dt}Me^{tJ}|_{t=0}$ with $t\ge 0$ sufficiently small.  We recall that $\Sp(2n,\R)^{+}$ and
$\Sp(2n,\R)^{-}$ are two path connected
 components of $\Sp(2n,\R)^{*}$
  which are simple connected in $\Sp(2n,\R)$. (For the proof of these facts we refer, for instance,  the interested reader to \cite[pag.58-59]{Lon02} and references therein).  Following authors in \cite[Definition 2.1]{LZ00} we start by recalling the following definition.
 \begin{defn}\label{def:Maslov-index-ok}
Let $\psi:[a,b]\rightarrow\Sp(2n,\R)$ be a continuous path. Then there exists an $\varepsilon>0$ such that for every $\theta\in[-\varepsilon,\varepsilon]\setminus\{0\}$, the matrices $\psi(a)e^{J\theta}$ and $\psi(b)e^{J\theta}$  lying both out of $\Sp(2n,\R)^0$ . We define the {\em $\iomega{1}$-index\/} or the {\em Maslov-type index\/} as follows
\begin{equation}
\iomega{1}(\psi)\=[e^{-J\varepsilon}\psi:\Sp(2n,\R)^{0}]
\end{equation}
where the (RHS) denotes the intersection number between the perturbed path $t\mapsto e^{-J\varepsilon} \psi(t)$ with the singular cycle $\Sp(2n,\R)^0$. 
\end{defn}
Through the parity of the $\iomega{1}$-index it is possible to locate  endpoints of the perturbed symplectic path  $t\mapsto e^{-J\varepsilon} \psi(t)$.  
 \begin{lem}\label{lem:parity property}{\bf (\cite[Lemma 5.2.6]{Lon02})\/}
 Let $\psi:[a,b] \to \Sp(2n, \R)$ be a continuous path.  The  following characterization holds
 \begin{itemize}
 \item[] $\iomega{1}(\psi)$ is even $\iff$ both the endpoints
 $e^{-\varepsilon J}\psi(a)$ and $e^{-\varepsilon J}\psi(b)$ lie in $\Sp(2n, \R)^+$ or
in $\Sp(2n, \R)^-$.
 \end{itemize}
 \end{lem}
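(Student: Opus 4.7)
The plan is to reduce the parity statement to a topological count of transverse crossings of the perturbed path with the singular cycle $\Sp(2n,\R)^0$, using the fact that the complement of $\Sp(2n,\R)^0$ in $\Sp(2n,\R)$ splits into exactly two connected components $\Sp(2n,\R)^+$ and $\Sp(2n,\R)^-$ (this is the topological input we take from Long's book).

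First, I would fix $\varepsilon>0$ so small that both endpoints $e^{-\varepsilon J}\psi(a)$ and $e^{-\varepsilon J}\psi(b)$ lie in $\Sp(2n,\R)^*$, as guaranteed by the definition of $\iomega{1}$. Then, using the co-orientation of $\Sp(2n,\R)^0$ at every smooth point (via the direction $\tfrac{d}{dt}Me^{tJ}|_{t=0}$) together with standard transversality, I would perturb $\psi$ rel endpoints (without changing $\iomega{1}$, by homotopy invariance) to obtain a path $\tilde\psi$ such that $t\mapsto e^{-\varepsilon J}\tilde\psi(t)$ meets $\Sp(2n,\R)^0$ only at finitely many interior instants $a<t_1<\dots<t_N<b$, and all crossings are transverse. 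At such a transverse crossing the local contribution to the intersection number is $\pm 1$ according to the co-orientation, so
\begin{equation}
\iomega{1}(\psi) = \sum_{j=1}^{N} \epsilon_j, \qquad \epsilon_j \in \{+1,-1\}.
\end{equation}

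Next I would observe the basic parity identity: for any finite sequence of signs, $\sum_j \epsilon_j \equiv N \pmod{2}$, since $+1$ and $-1$ have the same parity. Therefore the parity of $\iomega{1}(\psi)$ is exactly the parity of the total number $N$ of crossings of $t\mapsto e^{-\varepsilon J}\tilde\psi(t)$ with the hypersurface $\Sp(2n,\R)^0$. Between consecutive crossings the path stays in $\Sp(2n,\R)^*$, and by transversality it necessarily switches which of the two components $\Sp(2n,\R)^\pm$ it lies in at each crossing. Since both components are path-connected, the component containing the endpoint is a well-defined invariant of the endpoint.

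It follows that the endpoints lie in the same component (both in $\Sp(2n,\R)^+$ or both in $\Sp(2n,\R)^-$) if and only if $N$ is even, which by the previous paragraph is equivalent to $\iomega{1}(\psi)$ being even. The only serious issue is the topological fact that $\Sp(2n,\R)^+$ and $\Sp(2n,\R)^-$ are precisely the two path-connected components of $\Sp(2n,\R)^*$, and that $\Sp(2n,\R)^0$ is a co-oriented codimension-one subvariety so that the intersection number is well-defined; both are classical and may be invoked from Long's monograph, making the remainder of the argument essentially a bookkeeping exercise in parity.
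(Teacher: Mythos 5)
Your argument is correct in substance, but note that the paper does not prove this lemma at all: it is quoted directly from Long's monograph (Lemma 5.2.6 of [Lon02]), so your proposal is a genuinely different, self-contained route rather than a reproduction of the paper's proof. You reduce the parity of $\iomega{1}(\psi)$ to the parity of the number $N$ of transverse crossings of the perturbed path with the singular cycle, and then observe that each crossing flips the component of $\Sp(2n,\R)^*$, so the endpoints lie in the same component exactly when $N$ is even; combined with $\sum_j\epsilon_j\equiv N\pmod 2$ this gives the equivalence. The one point you gloss over, and which is precisely what the citation to Long supplies, is that $\Sp(2n,\R)^0$ is not a smooth hypersurface: it is stratified by the sets where $\dim\ker(M-\Id)=k$, of codimension $k(k+1)/2$, and only on the top stratum ($k=1$) does $\det(M-\Id)$ vanish to first order, so that a transverse crossing there contributes $\pm 1$ \emph{and} switches the sign of $\det(M-\Id)$. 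To make your phrase ``all crossings are transverse'' precise, you should perturb the path so that it meets only the top stratum (possible because the lower strata have codimension at least $3$) and then invoke the nonvanishing of $\frac{d}{dt}\det\big(Me^{tJ}-\Id\big)\big|_{t=0}$ at such points, which is exactly the co-orientation recalled before the lemma. With that proviso your bookkeeping is correct: the paper's citation buys the careful stratification analysis, while your route buys a transparent, elementary explanation of why the parity of the index only sees in which of the two components $\Sp(2n,\R)^{\pm}$ the rotated endpoints sit.
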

We close this section with a rather technical  result which will be used in the proof of the main instability criterion. 
\begin{lem}{\bf (\cite[Lemma 3.2]{HS10}.
)\/}\label{prop:how to know in which component}
 Let $\psi:[a,b] \to \Sp(2n, \R)$ be a continuous symplectic path such that $\psi(0)$ is linearly stable.
 \begin{enumerate}
  \item  If $1 \notin \sigma\big(\psi(a)\big)$ then there exists $\varepsilon >0$
sufficiently small such that
  $\psi(s) \in \Sp(2n, \R)^+$ for $|s| \in (0, \varepsilon)$.
  \item We assume that $\dim \ker \big(\psi(a)-\Id\big)=m$ and
  $\trasp{\psi(a)}J \psi'(a)\vert_V$ is non-singular for $V\= \psi^{-1}(a) \R^{2m}$. If
  $\ind\big({\trasp{\psi(a)}J \psi'(a)\vert_V}\big)$ is even [resp. odd] then there exists $\delta>0$
  sufficiently small such that  $\psi(s) \in \Sp(2n, \R)^+$
  [resp. $\psi(s) \in \Sp(2n, \R)^-$] for $|s| \in (0, \delta)$.
  \end{enumerate}
\end{lem}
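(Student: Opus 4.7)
The plan is to reduce both assertions to a sign analysis of $\det(\psi(s)-\Id)$ for $s$ near $a$, exploiting the characterization $\Sp(2n,\R)^{\pm}=\{M:\pm\det(M-\Id)>0\}$ together with continuity of the determinant. The linear stability hypothesis at the base point forces a very rigid spectral structure on $\psi(a)$ that I can exploit to locate $\psi(s)$ in $\Sp(2n,\R)^{\pm}$ just by knowing the sign of one determinant.

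For part (1), the hypothesis $1\notin\sigma(\psi(a))$ gives $\psi(a)\in\Sp(2n,\R)^{*}$, so by continuity so does $\psi(s)$ for $s$ close to $a$. I would read off $\sign\det(\psi(a)-\Id)$ from the spectrum: by linear stability the eigenvalues lie on $\U$ and are semi-simple, and the absence of $1$ leaves only conjugate pairs $e^{\pm i\theta}$, each contributing the positive factor $2-2\cos\theta$, together with a (symplectically necessarily) even multiplicity $2p$ of the eigenvalue $-1$ contributing $(-2)^{2p}>0$. Hence $\det(\psi(a)-\Id)>0$, placing $\psi(a)$ and a whole neighborhood in $\Sp(2n,\R)^{+}$.

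For part (2) the matrix $\psi(a)$ lies on the singular cycle $\Sp(2n,\R)^{0}$, and I want the sign of $\det(\psi(s)-\Id)$ to leading order. The natural move is a Lyapunov--Schmidt splitting $\R^{2n}=V\oplus W$ where $V$ is the $2m$-dimensional subspace associated to the eigenvalue $1$ and $W$ is a $\psi(a)$-invariant $\omega$-complement; on $W$ the block $\psi(a)|_W-\Id$ is invertible, and applying the argument of part (1) to the symplectic restriction $\psi(a)|_W$ yields $\det(\psi(a)|_W-\Id)>0$. On $V$ the block $\psi(a)|_V-\Id$ is nilpotent, so a Schur-complement expansion gives
\[
\det(\psi(s)-\Id)=(s-a)^{2m}\,c\cdot\det(\psi(a)|_W-\Id)+o((s-a)^{2m}),
\]
with $c$ a coefficient built from $\psi(a)|_V$ and $\psi'(a)|_V$. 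Since $(s-a)^{2m}>0$ for $s\ne a$ and the $W$-factor is positive, the sign of $\det(\psi(s)-\Id)$ near $s=a$ equals the sign of $c$.

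The core identification is then $\sign c=(-1)^{\ind(\trasp{\psi(a)}J\psi'(a)|_V)}$. I would derive this by (i) differentiating the symplectic relation $\trasp{\psi(s)}J\psi(s)=J$ at $s=a$ to show that $\trasp{\psi(a)}J\psi'(a)$ is a symmetric bilinear form, (ii) choosing a symplectic normal form on $V$ adapted to the Jordan structure of $\psi(a)$ at eigenvalue $1$, and (iii) reading off the leading $(s-a)^{2m}$-coefficient of $\det(\psi(s)|_V-\Id)$ in this form as $\det(\trasp{\psi(a)}J\psi'(a)|_V)$ up to a positive factor; the non-degeneracy hypothesis is exactly what keeps this coefficient nonzero. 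Step (iii) is the main obstacle: one has to choose a basis of $V$ in which the leading-order cancellation is transparent and the claimed identity of determinants is visible, which is essentially the symplectic analog of a crossing-form computation. Once this is done, even (resp.\ odd) Morse index yields $c>0$ (resp.\ $c<0$), so $\psi(s)\in\Sp(2n,\R)^{+}$ (resp.\ $\Sp(2n,\R)^{-}$) for $|s|\in(0,\delta)$, completing the proof.
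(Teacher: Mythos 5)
The paper itself offers no proof of this lemma: it is quoted from Hu--Sun \cite[Lemma 3.2]{HS10}, so there is no internal argument to compare yours with, and I can only judge the proposal on its own terms. Your overall strategy --- reading membership in $\Sp(2n,\R)^{\pm}$ off the sign of $\det(\psi(s)-\Id)$, getting part (1) from the pairing $e^{\pm i\theta}$ of the (unit-modulus, semisimple) eigenvalues together with the even multiplicity of $-1$, and getting part (2) by splitting off the eigenvalue-one part and identifying the leading coefficient of $\det(\psi(s)-\Id)$ with the crossing form $\trasp{\psi(a)}J\psi'(a)$ --- is the right one, and is essentially the standard route to this statement.

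As written, however, the decisive step is missing: your step (iii), i.e.\ the identification $\sign c=(-1)^{\ind(\trasp{\psi(a)}J\psi'(a)\vert_V)}$, is exactly the content of part (2), and you leave it as ``the main obstacle''. Two preparatory points also need repair before it can be closed. First, you call $\psi(a)\vert_V-\Id$ ``nilpotent''; if it were nilpotent and nonzero, your expansion $\det(\psi(s)-\Id)=(s-a)^{\dim V}c\,\det(\psi(a)\vert_W-\Id)+o((s-a)^{\dim V})$ would simply be false (a nontrivial Jordan block lowers the order in $(s-a)$ and changes the coefficient). What saves the argument is linear stability: $\psi(a)$ is semisimple, so on $V=\ker(\psi(a)-\Id)$ one has $\psi(a)\vert_V=\Id$ exactly, $V$ is a symplectic subspace (hence even-dimensional --- which is also what makes $(s-a)^{\dim V}>0$ on both sides of $a$, needed for the two-sided conclusion), and $W=V^{\omega}=\im(\psi(a)-\Id)$ is an invariant symplectic complement; these facts should be stated, not hidden in the word ``nilpotent'' and in the statement's (garbled) $m$ versus $2m$ bookkeeping. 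Second, once this is in place, step (iii) needs no symplectic normal form at all: for $v,w\in V$ one has $\omega\big(v,(\Id-P_V)\psi'(a)w\big)=0$ because $(\Id-P_V)\psi'(a)w\in V^{\omega}$, hence $\omega\big(v,P_V\psi'(a)w\big)=\omega\big(v,\psi'(a)w\big)=-\trasp{v}\,\trasp{\psi(a)}J\psi'(a)\,w$ using $\psi(a)v=v$. In a basis of $V$ this reads $\Omega A_{11}=\pm G$, where $\Omega$ is the Gram matrix of $\omega\vert_V$, $A_{11}=P_V\psi'(a)\vert_V$ is the matrix whose determinant is your $c$, and $G$ is the Gram matrix of the crossing form; since $\det\Omega=\mathrm{Pf}(\Omega)^2>0$, $\det G$ has sign $(-1)^{\ind}$ by non-degeneracy, and $\dim V$ is even (so the $\pm$ is harmless), one gets $\sign c=(-1)^{\ind(\trasp{\psi(a)}J\psi'(a)\vert_V)}$. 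With this supplement your argument is complete; without it, part (2) is not proved.
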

\begin{rem}
Knowing that $M \in \Sp(2n,\R)^0$, without any further information, it is not possible a priori to locate in which path connected components of $\Sp(2n, \R)^*$ is located the perturbed  matrix $e^{\pm \delta J}M$ for arbitrarily small positive  $\delta$. However if $M$ is linearly stable, we get the following result.
\end{rem} 
\begin{lem}\label{lem:linear stable is in positive side}
 Let $M\in \Sp(2n, \R) $ be a linearly stable symplectic matrix (meaning that $\spec{M} \subset \U$ and $M$ is diagonalizable). 
 Then, there exists $\delta >0$ sufficiently small such that $e^{\pm \delta J}M \in \Sp(2n,\R)^+$.
\end{lem}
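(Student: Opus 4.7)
The plan is to bring $M$ into a symplectic normal form and then compute $\det(e^{\pm \delta J}M - \Id)$ block by block. First I would show that the algebraic multiplicities of the eigenvalues $\pm 1$ of $M$ are even: writing the characteristic polynomial as $p(\lambda)=(\lambda \mp 1)^k q(\lambda)$ with $q(\pm 1)\ne 0$ and comparing with the reciprocal identity $p(\lambda)=\lambda^{2n}p(1/\lambda)$ (valid for any real symplectic matrix) forces $(-1)^k=1$, hence $k$ is even. I will call these multiplicities $2k$ and $2l$.

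Next, since $M$ is $\C$-diagonalizable with spectrum in $\U$, its distinct (complex) eigenspaces are pairwise $\omega$-orthogonal: a vector in $E_\lambda$ and one in $E_\mu$ pair to zero unless $\lambda\mu=1$. This will yield a symplectic direct sum decomposition
\[
\R^{2n}=V_1\oplus V_{-1}\oplus W_1\oplus\cdots\oplus W_m,
\]
where $V_{\pm 1}=\ker(M\mp \Id)$ and each $W_j$ is the real $2$-dimensional $M$-invariant subspace associated with a conjugate pair $\{e^{\pm i\theta_j}\}$, $\theta_j\in(0,\pi)$. I would then choose a symplectic basis adapted to this splitting so that $J$ becomes block-diagonal with standard symplectic blocks. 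On each $W_j$, writing a complex eigenvector as $v=u+iw$ yields a real basis $(u,w)$ in which $M|_{W_j}$ is a rotation; since $\omega(u,w)\ne 0$ (because $W_j$ is symplectic), a rescaling with the appropriate sign produces a symplectic basis in which $M|_{W_j}=R(\eta_j)$ with $\eta_j\in\{\theta_j,-\theta_j\}$ (the sign encodes the Krein signature). In this basis $M$ is block-diagonal with blocks $\Id_{2k}$, $-\Id_{2l}$, $R(\eta_1),\dots,R(\eta_m)$, and $M$ commutes with $J$ block by block.

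From this commutation, $e^{\pm\delta J}M$ is block-diagonal with blocks $e^{\pm\delta J_{2k}}$, $-e^{\pm\delta J_{2l}}$, $R(\eta_j\pm\delta)$. Since $e^{\pm\delta J_{2k}}$ has eigenvalues $e^{\pm i\delta}$ (each of multiplicity $k$), a direct computation will give
\[
\det(e^{\pm\delta J}M - \Id)=(2-2\cos\delta)^k\,(2+2\cos\delta)^l\,\prod_{j=1}^m\bigl(2-2\cos(\eta_j\pm\delta)\bigr),
\]
and every factor is strictly positive for all sufficiently small $\delta>0$ (each $\eta_j\pm\delta$ stays away from $0\bmod 2\pi$ because $\eta_j\in(-\pi,0)\cup(0,\pi)$). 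Hence $\det(e^{\pm\delta J}M - \Id)>0$, which forces $e^{\pm\delta J}M\in\Sp(2n,\R)^+$, as claimed.

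The hard part will be the symplectic normalization of the $2$-dimensional blocks $W_j$, i.e.\ identifying and using the Krein sign that distinguishes $R(\theta_j)$ from $R(-\theta_j)$; once the block structure is in place the determinant computation is routine.
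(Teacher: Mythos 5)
Your route (symplectic normal form plus a block-by-block computation of $\det(e^{\pm\delta J}M-\Id)$) is genuinely different from the paper's, but as written it has a real gap at the decisive step. After you pass to a symplectic basis adapted to the splitting, the matrix of the symplectic \emph{form} is again the standard $J$, but the linear map $J$ that you exponentiate in $e^{\pm\delta J}$ transforms by conjugation: if $S$ is the (symplectic) change of basis, then $S^{-1}JS=(\trasp{S}S)^{-1}J$, which equals $J$ only when $S$ is also orthogonal. A linearly stable $M$ can be brought to your rotation normal form by a symplectic $S$, but in general not by an orthogonal symplectic one: if $M=UNU^{-1}$ with $U\in\Sp(2n,\R)\cap\OO(2n)$ and $N$ a direct sum of rotations and of $\pm\Id$ blocks, then $M$ itself would be orthogonal, and non-orthogonal elliptic symplectic matrices certainly exist. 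Hence $\det(e^{\pm\delta J}M-\Id)=\det\bigl(e^{\pm\delta(\trasp{S}S)^{-1}J}\,S^{-1}MS-\Id\bigr)$, and your product formula is not justified; the block-diagonal normal form of $M$ commutes with the \emph{block} $J$, not with the original one. This is not a cosmetic issue: the factors coming from the elliptic blocks and from the $(-1)$-eigenspace are harmless in any case, so the whole content of the lemma sits in the $1$-eigenvalue part, where one must rule out that under the perturbation $e^{\pm\delta J}$ the eigenvalues at $1$ split into real pairs $r,1/r$ (an odd number of such pairs would give $\det(e^{\pm\delta J}M-\Id)<0$, i.e.\ $\Sp(2n,\R)^-$). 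Your computation assumes exactly this away by writing the $1$-block of $e^{\pm\delta J}M$ as $e^{\pm\delta J_{2k}}$, so the key point is begged rather than proved.

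The paper avoids the normal form entirely: along the path $M(\theta)=e^{-\theta J}M$ one computes $\trasp{M(\theta)}J\tfrac{d}{d\theta}M(\theta)\big\vert_{\theta=0}=\trasp{M}M$, which is positive definite, so the crossing form at the eigenvalue $1$ has index $0$ (even), and Lemma~\ref{prop:how to know in which component} (from \cite{HS10}) then places $e^{\pm\delta J}M$ in $\Sp(2n,\R)^+$; linear stability enters as the hypothesis of that lemma. To repair your argument you would either have to prove the analogous Krein-type statement yourself (a definite crossing form forces the eigenvalues at $1$ to move along $\U$ to first order, independently of the non-commutation of $M$ with $J$), or supply a homotopy from the perturbation direction $(\trasp{S}S)^{-1}J$ to $J$ along which the determinant stays nonzero for fixed small $\delta$; without one of these additions the proposed proof does not go through, even though all the preliminary steps (evenness of the multiplicities of $\pm1$, $\omega$-orthogonality of eigenspaces, Krein normalization of the elliptic blocks) are sound.
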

\begin{proof}
Let us consider the  symplectic path pointwise defined by
 $M(\theta)\= e^{-\theta J}M$. By a direct computation we get that
\[
 \trasp{M(\theta)} J \dfrac{d}{d\theta} M(\theta)\Big\vert_{\theta=0} =
\trasp{M}M.
\]
We observe that $\trasp{M}M$ is symmetric and positive semi-definite; moreover since $M$ invertible it follows that
$\trasp{M}M$ is actually positive definite. Thus, in particular,
$\iiindex(\trasp{M}M)=0$. By invoking Lemma \ref{prop:how to know in which component} it follows
that there exists $\delta >0$ such that $M(\pm \delta) \in \Sp(2n, \R)^+ $. This concludes the proof.
\end{proof}


\section{On the Spectral Flow for bounded selfadjoint Fredholm operators}\label{sec:spectral-flow}
%
Let $\mathcal W, \mathcal H$ be  real separable Hilbert spaces with a dense 
and 
continuous inclusion $\mathcal W \hookrightarrow \mathcal H$.
\begin{note}
We denote by  
$\mathcal{B}(\mathcal W,\mathcal H)$ the Banach  space of all linear bounded 
operators (if $\mathcal W=\mathcal H$ we use the shorthand notation 
$\mathcal{B}(\mathcal H)$); by $\mathcal{B}^{sa}(\mathcal W, \mathcal H)$ we 
denote the set of all  bounded selfadjoint operators when regarded as operators 
on  $\mathcal H$ and finally $\mathcal{BF}^{sa}(\mathcal W, \mathcal H)$ 
denotes 
the set of all bounded selfadjoint Fredholm operators and we recall that an 
operator $T \in \mathcal{B}^{sa}(\mathcal W, \mathcal H)$ is Fredholm if and 
only if its kernel is finite dimensional and its image is closed. 
\end{note}
For $T \in\mathcal{B}(\mathcal W, \mathcal H)$ we recall that the {\em spectrum 
\/} of $T$ is 
\[
\sigma(T)\= \Set{\lambda \in \C| T-\lambda I \text{ is not 
invertible}}
\]
 and that $\sigma(T)$ is decomposed into the {\em essential 
spectrum\/} and the {\em discrete spectrum\/} defined respectively as 
$\sigma_{ess}(T) \= \Set{\lambda \in \C| T-\lambda I \notin 
\mathcal{BF}(\mathcal W, \mathcal H)}$ and $\sigma_{disc}(T)\= \sigma(T) 
\setminus \sigma_{ess}(T)$.
It is worth noting that $\lambda \in \sigma_{disc}(T)$ if and only if it is an 
isolated point in $\sigma(T)$ and $\dim \ker (T - \lambda I)<\infty$.

Let now $T \in \mathcal{BF}^{sa}(\mathcal W,\mathcal H)$, then either $0$ is 
not 
in $\sigma(T)$ or it is in $\sigma_{disc}(T)$ (cf. \cite[Lemma 2.1]{Wat15}), 
and, as a consequence of the Spectral Decomposition Theorem (cf. \cite[Theorem 
6.17, Chapter 
III]{Kat80}), the following orthogonal decomposition holds
\[
 \mathcal W = E_-(T) \oplus \ker T \oplus E_+(T),
\]
with the property
\[
 \sigma(T) \cap(-\infty, 0)= \sigma\left(T_{E_-(T)}\right) \textrm{ and } 
 \sigma(T) \cap(0,+\infty)= \sigma\left(T_{E_+(T)}\right).
\]
\noindent
\begin{defn}\label{def:Morseindex}
Let $T \in \mathcal{BF}^{sa}(\mathcal W,\mathcal H)$. If $\dim E_-(T)<\infty$ 
(resp.  $\dim 
E_+(T)<\infty$), 
we define its {\em Morse index\/} (resp. {\em Morse co-index\/})
as the integer denoted by $\iMor(T)$  (resp. $\coiMor(T)$) and defined as:
\[
 \iMor(T) \= \dim E_-(T)\qquad \big(\textrm{resp. } \coiMor(T)\= \dim E_+(T)\big).
\]
\end{defn}
\smallskip
The space  $\mathcal{BF}^{sa}(\mathcal H)$ was intensively 
investigated by Atiyah and Singer in \cite{AS69} \footnote{%
Actually, in this reference, only skew-adjoint Fredholm operators were 
considered, but the case of bounded selfadjoint Fredholm operators presents no 
differences.} and the following important topological characterisation can be 
deduced.
\begin{prop}\label{thm:as69} (Atiyah-Singer, \cite{AS69})
The space $\mathcal{BF}^{sa}(\mathcal H)$ consists of three connected 
components:
\begin{itemize}
\item the {\em essentially positive\/} $
 \mathcal{BF}^{sa}_+(\mathcal H)\=\Set{T \in\mathcal{BF}^{sa}(\mathcal 
H)|\sigma_{ess}(T) 
  \subset (0,+\infty)}$; 
 \item the  {\em essentially negative\/} 
  $\mathcal{BF}^{sa}_-(\mathcal H)\=\Set{T \in\mathcal{BF}^{sa}(\mathcal 
H)|\sigma_{ess}(T) 
  \subset (-\infty,0)}$;
  \item the {\em strongly indefinite\/}
$ \mathcal{BF}^{sa}_*(\mathcal H)\=\mathcal{BF}^{sa}(\mathcal 
H)\setminus(\mathcal{BF}^{sa}_+(\mathcal H)
  \cup \mathcal{BF}^{sa}_-(\mathcal H)). $
  \end{itemize}
The spaces $\mathcal{BF}_+^{sa}(\mathcal H),\mathcal{BF}_-^{sa}(\mathcal H)$ 
are  contractible (actually convex), whereas $ \mathcal{BF}^{sa}_*(\mathcal 
H)$ is topological non-trivial; more precisely, $\pi_1(\mathcal{BF}^{sa}_*(\mathcal 
H))\simeq\Z.$
\end{prop}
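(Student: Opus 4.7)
The plan is to partition $\mathcal{BF}^{sa}(\mathcal H)$ into the three announced sets, verify openness, prove contractibility of the two definite components by a convex deformation, and finally identify the fundamental group of the strongly indefinite component through a homotopy-equivalence with a loop space of Fredholm operators. First I would check that the three subsets are disjoint, exhaustive, and open. Since $\sigma_{\mathrm{ess}}(T)\subset\R$ is closed and avoids $0$ for every Fredholm self-adjoint $T$, the essential spectrum lies either entirely in $(0,+\infty)$, entirely in $(-\infty,0)$, or meets both half-lines; openness then follows from the stability of the essential spectrum under small bounded self-adjoint perturbations, a standard consequence of the Fredholm stability theorem.

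Next I would prove that $\mathcal{BF}^{sa}_{\pm}(\mathcal H)$ are convex, hence contractible. For $T_0,T_1\in\mathcal{BF}^{sa}_+(\mathcal H)$, spectral projection onto $[\delta,+\infty)$ for a suitable $\delta>0$ provides closed subspaces $V_0,V_1$ of finite codimension on which $T_0,T_1$ are positive definite; on the intersection $V_0\cap V_1$, still of finite codimension, every convex combination $(1-s)T_0+sT_1$ remains positive definite and is therefore essentially positive. A straight-line contraction to $I$ (respectively $-I$) then gives the deformation retract. For the strongly indefinite component, path-connectedness can be established by first deforming any $T\in\mathcal{BF}^{sa}_*$ to its symmetry $T/|T|$ through the bounded functional calculus (applied to a family of cut-offs away from zero), and then observing that any two self-adjoint involutions with infinite-dimensional $\pm 1$-eigenspaces are unitarily conjugate, with Kuiper's contractibility of $\mathcal{U}(\mathcal H)$ joining the conjugating unitaries.

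The hard part is the computation $\pi_1(\mathcal{BF}^{sa}_*(\mathcal H))\cong\Z$. Following Atiyah and Singer I would exhibit a weak homotopy equivalence $\mathcal{BF}^{sa}_*(\mathcal H)\simeq\Omega\,\mathcal{F}(\mathcal H)$ with the loop space of the bounded Fredholm operators, realized through the Cayley transform $c(T)=(T-iI)(T+iI)^{-1}$: this maps $\mathcal{BF}^{sa}_*$ into a distinguished subset of the unitary group, and associating to each $T$ a canonical loop of Fredholm operators built from $c(T)$ gives the sought equivalence. Combining this with the complex Bott periodicity identifications $\pi_1(\Omega\,\mathcal{F})\cong\pi_2(\mathcal{F})\cong\pi_0(\mathcal{F})\cong\Z$, the last being the Fredholm index, yields the stated isomorphism. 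In the paper's setting the explicit generator is represented by the spectral flow of the loop, which is precisely the integer invariant that governs the whole analysis of the main text.
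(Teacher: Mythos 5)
The paper does not actually prove this proposition: it is quoted from Atiyah--Singer \cite{AS69} with only a footnote, so your attempt can only be measured against the classical argument. The elementary parts of your sketch are sound. The three sets are disjoint, open and exhaust $\mathcal{BF}^{sa}(\mathcal H)$; your convexity argument for $\mathcal{BF}^{sa}_{\pm}(\mathcal H)$ works, although you should add the one-line verification that an operator which is positive definite on a closed finite-codimensional subspace has $0\notin\sigma_{ess}$, so that the convex combination is again Fredholm; and the path-connectedness of $\mathcal{BF}^{sa}_{*}(\mathcal H)$ via functional-calculus retraction onto a symmetry with two infinite-dimensional eigenspaces and conjugation is correct (you must first remove the finite-dimensional kernel before forming $T/\vert T\vert$, and only connectedness of the unitary/orthogonal group is needed, not the full strength of Kuiper's theorem).

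The genuine gap is in the computation of $\pi_1$. First, the Appendix of the paper works with a \emph{real} separable Hilbert space, and for real scalars both of your key identifications fail: $\Omega\,\mathcal F(\mathcal H)\simeq O(\infty)$, whose fundamental group is $\Z_2$, so $\mathcal{BF}^{sa}_{*}(\mathcal H)$ cannot be weakly homotopy equivalent to $\Omega\,\mathcal F(\mathcal H)$; and the chain $\pi_1(\Omega\mathcal F)\cong\pi_2(\mathcal F)\cong\pi_0(\mathcal F)$ uses the two-fold \emph{complex} Bott periodicity, whereas real periodicity is eight-fold and $\pi_2(\Z\times BO)\cong\Z_2$. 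Moreover the Cayley transform $c(T)=(T-iI)(T+iI)^{-1}$ presupposes a complex structure. The correct statement in the real setting, which is what the Clifford-algebra machinery of \cite{AS69} delivers, is that the strongly indefinite component is a representing space for $KO^{1}$, weakly homotopy equivalent to $U(\infty)/O(\infty)$, whose fundamental group is indeed $\Z$; so the proposition is true, but not by the route you describe. Second, even if one grants complex scalars, the step you introduce with ``following Atiyah and Singer'' --- that the transform induces a weak homotopy equivalence with the loop space of Fredholm operators --- is exactly the hard content of \cite{AS69}, resting on Kuiper's theorem and Bott periodicity; as written you are citing that theorem rather than proving it, which leaves your argument at the same level as the paper's bare citation instead of providing an independent proof of the $\pi_1$ statement.
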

\begin{rem}
By the definitions of the connected components of $\mathcal{BF}^{sa}$, we 
deduce 
that a bounded linear  operator is essentially positive if and only if it is a symmetric 
compact perturbation of a (bounded) positive definite selfadjoint operator. 
Analogous observation hold for essentially negative operators. 
\end{rem}
Even if in the strongly indefinite case the  Morse index as well as the  Morse co-index are meaningless, it is possible to define a sort of relative version, usually called {\em relative Morse index\/}. In order to fix our notation, we need to recall some definitions and basic facts. Our basic references are \cite{Abb01,Kat80}.

Let  $V, W$ be two  closed subspaces of $\mathcal H$ and let $P_V$ (resp. $ P_W$)  denote the orthogonal projection onto $V$ (resp. $W$).  $V, W$ are commensurable if the operator $P_V- P_W$ is compact.
\begin{defn}\label{def:commensurable-pairs}
 Let  $V$ and $W$ be two closed commensurable subspaces of $\mathcal H$. The {\em relative dimension\/} of $W$ with respect to $V$ is the integer 
\begin{equation}\label{eq:commensurable}
\dim(W,V)\= \dim (W \cap V^\perp)- \dim (W^\perp \cap V).	
\end{equation}
\end{defn}
\begin{rem}\label{rem:mi-serve}
	We observe that the spaces $W \cap V^\perp$ and  $W^\perp \cap V$  appearing in the (RHS) of Equation~\eqref{eq:commensurable} are finite dimensional because they are the spaces of fixed points of the compact operators $P_{V^\perp}P_W$ and $P_{W^\perp}P_V$, respectively. (For further details, cfr. \cite[pag.44]{Abb01}).
\end{rem}
\begin{prop}(\cite[Proposition 2.3.2]{Abb01})\label{thm:abbo2.3.2}
Let  $S,T \in \mathcal{BF}^{sa}(\mathcal H)$ be 
such that $S-T$ is a compact. Then the negative (resp. positive) eigenspaces are commensurable.  
\end{prop}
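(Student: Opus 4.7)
The strategy is to express both negative spectral projections as Riesz contour integrals around a common contour in the complex plane, so that their difference becomes a norm-convergent integral of compact operators.

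First I would establish a spectral gap. Since $T \in \mathcal{BF}^{sa}(\mathcal H)$, the point $0$ is either absent from $\sigma(T)$ or is an isolated eigenvalue of finite multiplicity; the same holds for $S$. Compactness of $S - T$ yields the well-known equality $\sigma_{\textup{ess}}(S) = \sigma_{\textup{ess}}(T)$ (Weyl's theorem), and this common essential spectrum is a closed subset of $\R$ avoiding $0$. Because the non-essential spectrum of a selfadjoint operator consists of isolated eigenvalues of finite multiplicity that can only accumulate on the essential spectrum, one can find $\delta > 0$ such that
\[
\sigma(T) \cap (-\delta, \delta) \subseteq \{0\} \quad \textrm{ and } \quad \sigma(S) \cap (-\delta, \delta) \subseteq \{0\}.
\]
Securing this spectral gap is the main technical point of the proof and is precisely where one uses the Fredholm assumption on \emph{both} operators, not merely compactness of $S-T$.

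Next, set $M \= \max(\norm{S}, \norm{T})$ and choose a positively oriented smooth simple closed contour $\Gamma \subset \C$ (for instance a rectangle with vertices at $-M-1 \pm i$ and $-\delta/2 \pm i$) so that $\Gamma$ lies in the resolvent set of both operators and encloses precisely $\sigma(T) \cap (-\infty, 0)$ and $\sigma(S) \cap (-\infty, 0)$. Riesz functional calculus then gives
\[
P_{E_-(T)} = \frac{1}{2\pi i} \oint_\Gamma (zI - T)^{-1}\, dz, \qquad P_{E_-(S)} = \frac{1}{2\pi i} \oint_\Gamma (zI - S)^{-1}\, dz.
\]

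Finally, the second resolvent identity yields
\[
P_{E_-(S)} - P_{E_-(T)} = \frac{1}{2\pi i} \oint_\Gamma (zI - S)^{-1}(S - T)(zI - T)^{-1}\, dz.
\]
Since $S - T$ is compact and the ideal of compact operators is closed in operator norm and stable under two-sided composition with bounded operators, the integrand is a norm-continuous family of compact operators on the compact contour $\Gamma$, and hence the integral is itself compact. In particular $P_{E_-(S)} - P_{E_-(T)}$ is compact. An identical argument applied to a contour enclosing the positive spectra shows that $P_{E_+(S)} - P_{E_+(T)}$ is compact, and the commensurability of both the negative and positive eigenspaces then follows directly from Definition~\ref{def:commensurable-pairs}.
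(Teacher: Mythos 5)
Your proof is correct. Note that the paper itself offers no argument for this statement: it is quoted directly from \cite[Proposition 2.3.2]{Abb01}, so there is no in-paper proof to compare against. Your contour-integral route is the standard self-contained way to prove it: Fredholmness of $S$ and $T$ (together with Weyl's invariance of the essential spectrum under the compact perturbation $S-T$) gives the spectral gap around $0$, the rectangle you describe lies in both resolvent sets and encloses exactly the strictly negative parts of both spectra, the Riesz integrals coincide with the orthogonal spectral projections onto $E_-(S)$ and $E_-(T)$ because the operators are selfadjoint, and the second resolvent identity together with norm-closedness of the compact ideal makes $P_{E_-(S)}-P_{E_-(T)}$ compact, which is precisely the commensurability notion introduced just before Definition~\ref{def:commensurable-pairs}; the positive eigenspaces are handled symmetrically. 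This is essentially equivalent in substance to the spectral-projection argument of the cited reference, so nothing further is needed.
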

As direct consequence of Proposition \ref{thm:abbo2.3.2}, 
we are entitled to introduce the following definition.
\begin{defn}\label{def:relativeMorseindex}
We define the {\em relative Morse index\/} of an ordered pair of selfadjoint Fredholm operators  $S, T \in \mathcal{BF}^{sa}(\mathcal H)$ such that $S-T$ 
is compact, as the integer 
\begin{equation}\label{eq:relative-dimension-general}
 \irel(T,S)\= \dim\Big(E_-(S)\cap \big(E_+(T)\oplus E_0(T)\big)\Big)-\dim\Big(E_-(T)\cap \big(E_+(S)\oplus E_0(S)\big)\Big).
\end{equation}
\end{defn}
\begin{rem}
For further details on pairs of commensurable subspaces and Fredholm pairs, we refer the interested reader to \cite[Chapter 2]{Abb01}. Compare with \cite{ZL99} and \cite[Definition 2.1]{HS09}.
\end{rem}
If $S,T\in \mathcal{GL}^{sa}(\mathcal H)$, such that $S-T$ 
is compact then the relative Morse index given in Equation~\eqref{eq:relative-dimension-general}, reduces to   
\begin{equation}\label{eq:relative-dimension-special}
 \irel(T,S)\= \dim\big(E_-(S),E_-(T)\big).
\end{equation}
If $S$ is a compact perturbation of a non-negative definite 
operator $T$, so, it is essentially positive and $
\irel(T,S)=\iMor(S).$ 
\begin{rem}\label{rmk:segno-sf}
It is worth noticing that in the special case of Calkin equivalent positive isomorphisms, authors in \cite{FPR99}, defined the relative Morse index $I(T,S)$. However, their definition agrees with the one given in Definition \ref{def:relativeMorseindex} only up to the sign.  
\end{rem}

We are now in position to introduce the spectral flow. 

Given a  $\mathscr C^1$-path  $L:[a,b]\to\mathcal{BF}^{sa}(\mathcal W, \mathcal 
H)$, the spectral flow of $L$ counts the net number of eigenvalues crossing 0. 
\begin{defn}\label{def:crossing}
An instant $t_0 \in (a,b)$ is called a \emph{crossing instant} (or {\em 
crossing\/} for short) if $\ker  L_{t_0} \neq \{0\}$. The \emph{crossing form} 
at a crossing $t_0$ is the quadratic form defined by 
\[
 \Gamma( L, t_0): \ker  L_{t_0} \to \R, \ \ \Gamma( L, 
t_0)[u] \=\langle 
 \dot{ L}_{t_0} u, u \rangle_{\mathcal H},
\]
where we denoted by $\dot{L}_{t_0}$ the derivative of $L$ 
with respect to the parameter $t \in [a,b]$ at the point $t_0$.
A crossing is called \emph{regular}, if $\Gamma( L, t_0)$ is 
non-degenerate. If $t_0$ is a crossing instant for $L$, we refer to 
$m(t_0)$ the dimension of $\ker  L_{t_0}$.
\end{defn}

\begin{rem}
It is worth noticing that regular crossings are isolated, and hence, on a compact interval are in a finite number. Thus we are entitled to introducing the following definition.
\end{rem}

In the case of regular curve (namely a curve having only regular crossings)  we introduce the following Definition. 
\begin{defn}\label{def:new-spectralflow-def}
 Let  $L:[a,b]\to\mathcal{BF}^{sa}(\mathcal  H)$ be a $\mathscr 
C^1$-path and 
 we assume that it has only regular crossings. Then 
 \begin{equation}\label{eq:spectral-flow-crossings}
\spfl(L; [a,b])= \sum_{t \in (a,b)} \sgn \Gamma(L, t)- 
\iMor\big(\Gamma(L,a)\big)
+ \coiMor\big(\Gamma(L,b)\big),
\end{equation}
where the sum runs over all regular (and hence in a finite number) strictly contained in  $[a,b]$.
\end{defn}
\begin{rem}
It is worth noticing that, it is always possible to perturb the path $L$ for getting a regular path. Moreover, as consequence of the fixed end-points homotopy invariance of the spectral flow, the spectral flows of the original (unperturbed) and the perturbed  paths both coincide.
\end{rem}

\begin{defn}\label{def:positive-paths}
	The $\mathscr C^1$-path $L:[a,b]\ni t \mapsto L_t\in \mathcal{BF}^{sa}(\mathcal  H)$ is termed {\em positive\/} or {\em plus\/} path, if at each crossing instant $t_*$ the crossing form $\Gamma(L, t_*)$ is positive definite.  
\end{defn}
\begin{rem}
We observe that in the case of a positive path, each crossing is regular and in particular the total number of crossing instants on a compact interval is finite. Moreover the local contribution at each crossing to the spectral flow is given by the dimension of the intersection. Thus given a positive path $L$, the spectral flow is given by 
 \[
\spfl(L; [a,b])= \sum_{t \in (a,b)} \dim  \ker L(t)+ \dim  \ker L(b). 
\]
\end{rem}
\begin{defn}\label{def:admissible-paths-operators}
The path $L:[a,b]\to\mathcal{BF}^{sa}(\mathcal  H)$ is termed {\em admissible\/} provided it has invertible endpoints. 
\end{defn}
For paths of bounded self-adjoint Fredholm operators parametrized on $[a,b]$ which are compact perturbation of a fixed operator,  the spectral flow given in Definition \ref{def:new-spectralflow-def}, can be characterized as the relative Morse index of its endpoints. More precisely, the following result holds. 
\begin{prop}\label{thm:spfl-operatori-diff-rel-morse}
Let us consider the  path  $L: [a,b] \to\mathcal{BF}^{sa}(\mathcal  H)$  and we assume that for every $t \in [a,b]$, the operator  $L_t- L_a$ is compact.   Then
\begin{equation}\label{eq:equality-spfl-relmorse}
-\spfl(L; [a,b])=\irel(L_a, L_b).
\end{equation}
Moreover if $L_a$ is essentially positive, then we have 
\begin{equation}\label{eq:diff-Morse}
-\spfl(L; [a,b])
=\iMor (L_b)-\iMor(L_a)
\end{equation}
and if furthermore  $L_b$ is positive definite, then 
\begin{equation}
\spfl(L; [a,b])=\iMor(L_a).
\end{equation}
\end{prop}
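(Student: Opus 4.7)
The three identities are logically linked: the first is the main content, the second is its specialization to the essentially positive case (where $\irel$ collapses to a difference of Morse indices by commensurability), and the third follows at once from the second by noting that a positive definite operator has vanishing Morse index. I will therefore focus on establishing $-\spfl(L;[a,b]) = \irel(L_a, L_b)$. My first move will be to reduce to the straight-line path $\ell_s = (1-s)L_a + sL_b$, $s \in [0,1]$, by invoking the fixed-endpoints homotopy invariance of the spectral flow. The bi-parametric family
\[
H(s,\tau) = (1-\tau)\, L_{a+s(b-a)} + \tau\, \ell_s, \qquad (s,\tau) \in [0,1] \times [0,1],
\]
connects $L$ with $\ell$, keeps the endpoints fixed, and each $H(s,\tau)$ differs from $L_a$ only by a compact selfadjoint operator, being a convex combination of terms of that type. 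Hence the whole homotopy remains inside $\mathcal{BF}^{sa}(\mathcal{H})$, and one concludes $\spfl(L;[a,b]) = \spfl(\ell;[0,1])$.

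\textbf{Finite-dimensional reduction.} Next I would perform a finite-dimensional reduction on the linear path. Since $K := L_b - L_a$ is compact, Weyl's theorem on the stability of the essential spectrum yields $\sigma_{\mathrm{ess}}(\ell_s) = \sigma_{\mathrm{ess}}(L_a)$ for every $s \in [0,1]$; in particular there exists $\rho > 0$ such that $[-\rho,\rho] \cap \sigma_{\mathrm{ess}}(\ell_s) = \emptyset$ uniformly in $s$. The spectral projections $\chi_{[-\rho,\rho]}(\ell_s)$ are then finite-rank and depend continuously on $s$, so a standard splitting argument would produce a decomposition $\mathcal{H} = V \oplus V^{\perp}$ — with $V$ a fixed finite-dimensional subspace containing all eigenspaces of $\ell_s$ associated with eigenvalues in $[-\rho,\rho]$ — such that on $V^{\perp}$ the (conjugated) path remains uniformly invertible and therefore contributes nothing to the spectral flow. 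The computation will then reduce to a finite-dimensional one on $V$: via Definition~\ref{def:new-spectralflow-def} and a direct identification of the negative eigenspaces of $\ell_0|_V$ and $\ell_1|_V$, the outcome will match
\[
\dim\!\Big(E_-(L_b) \cap \big(E_+(L_a) \oplus E_0(L_a)\big)\Big) - \dim\!\Big(E_-(L_a) \cap \big(E_+(L_b) \oplus E_0(L_b)\big)\Big),
\]
which is exactly $\irel(L_a, L_b)$ by Equation~\eqref{eq:relative-dimension-general}, up to the expected sign.

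\textbf{Specializations and main obstacle.} If $L_a$ is essentially positive then $E_-(L_a)$ is finite-dimensional, and by Proposition~\ref{thm:abbo2.3.2} so is $E_-(L_b)$. Under this assumption $\irel(L_a, L_b)$ collapses to $\iMor(L_b) - \iMor(L_a)$, yielding the second identity; if moreover $L_b$ is positive definite then $\iMor(L_b) = 0$ and the third identity follows immediately. The step I expect to be the main obstacle is precisely the finite-dimensional reduction: the subspace $V$ is not invariant under each $\ell_s$, so making the block-diagonal argument rigorous will require either a Kato-type analytic perturbation of the spectral projections along the path, or, alternatively, an auxiliary small perturbation making all crossings regular and then a direct counting via the crossing forms of Definition~\ref{def:crossing}. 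Both routes are available in the literature but both involve a nontrivial amount of technical bookkeeping.
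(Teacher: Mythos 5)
Your overall architecture (everything hinges on the first identity; the second follows by commensurability via Proposition~\ref{thm:abbo2.3.2}; the third from $\iMor(L_b)=0$) matches the paper, and the fixed-endpoint homotopy onto the segment $\ell_s=(1-s)L_a+sL_b$ is legitimate. The genuine gap is the step you yourself flag: the finite-dimensional reduction, which is where all the content of $-\spfl(L;[a,b])=\irel(L_a,L_b)$ actually sits, and as written it does not go through. A single fixed finite-dimensional subspace $V$ containing the ranges of all the projections $\chi_{[-\rho,\rho]}(\ell_s)$, $s\in[0,1]$, need not exist: each projection is finite rank, but the rank can jump as eigenvalues cross $\pm\rho$, the projections fail to be continuous at those instants, and the union of their ranges over uncountably many $s$ has no reason to span a finite-dimensional space. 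The standard repair is local in $s$: subdivide $[0,1]$, choose on each subinterval a spectral window whose endpoints avoid the spectra, reduce there, and then glue using path additivity of the spectral flow together with a corresponding additivity of the relative index (which itself needs an argument when kernels are present). Moreover, even granting a reduction, the identification of the reduced finite-dimensional count with $\dim\bigl(E_-(L_b)\cap(E_+(L_a)\oplus E_0(L_a))\bigr)-\dim\bigl(E_-(L_a)\cap(E_+(L_b)\oplus E_0(L_b))\bigr)$ under the asymmetric endpoint convention of Definition~\ref{def:new-spectralflow-def} (a degenerate initial point contributes $-\iMor$ of the crossing form, a degenerate final point $+\coiMor$) is exactly the statement to be proved; ``the outcome will match'' cannot be left as an assertion.

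For comparison, the paper does not reprove this identity at all: it perturbs to the path $t\mapsto L_t+\varepsilon\Id$, notes via homotopy invariance that the two connecting paths $s\mapsto L_a+s\varepsilon\Id$ and $s\mapsto L_b+s\varepsilon\Id$ contribute nothing for $\varepsilon$ small (they are positive paths whose only possible crossing is at the initial instant), and then quotes \cite[Proposition 3.3]{FPR99} for the resulting admissible path, with \cite[Proposition 3.9]{FPR99} giving the essentially positive case. The cheapest way to close your gap is to graft this $\varepsilon\Id$ trick onto your homotopy step so that both endpoints become invertible — this eliminates the delicate kernel bookkeeping — and then either cite \cite{FPR99} (or \cite{ZL99}) or carry out the subdivided finite-dimensional reduction in the admissible setting, where the identification with the difference of negative spectral subspaces is clean. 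Your handling of the two specializations is fine as it stands.
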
 
\begin{proof}
The proof of the equality in  Equation~\eqref{eq:equality-spfl-relmorse} is an immediate consequence of the fixed end homotopy properties of the spectral flow. For, let $\varepsilon >0 $ and let us consider the two-parameter family 
\[
L:[0,1]\times [a,b] \to \mathcal{BF}^{sa}(\mathcal  H) \textrm{ defined by } L(s,t)\= L_t+ s \,\varepsilon\, \Id. 
\]
By the homotopy property of the spectral flow, we get that 
\begin{multline}\label{eq:fff}
\spfl(L_t; t \in [a,b])\\	=  \spfl(L_a+s \varepsilon \Id, s \in [0,1]) + \spfl(L_t+\varepsilon\Id, t \in [a,b])- \spfl(L_b + s \varepsilon\Id, s \in [0,1])\\=  \spfl(L_t+\varepsilon\Id, t \in [a,b])
\end{multline}
where the last equality in  Equation~\eqref{eq:fff} is consequence if the positivity of all the involved paths.  By choosing a maybe smaller $\varepsilon>0$ the path  $t\mapsto L_t+\varepsilon\Id$ is admissible (in the sense of Definition  \ref{def:admissible-paths-operators}). The conclusion, now readily follows by applying  \cite[Proposition 3.3]{FPR99} (the minus sign appearing is due to a different choosing convention for the spectral flow. Cfr. Remark \ref{rmk:segno-sf}) to the path $t\mapsto L_t+\varepsilon\Id$. 
In order to prove the second claim, it is enough to observe that if $L_a$ is essentially positive, then $L$ is a  path entirely contained in the (path-connected component) $\mathcal{BF}_+^{sa}(\mathcal  H)$. The proof of the equality in Equation~\eqref{eq:diff-Morse} is now a direct  consequence of Equation the previous argument and \cite[Proposition 3.9]{FPR99}. The last can be deduced by Equation~\eqref{eq:diff-Morse} once observed that $\iMor(L_b)=0$. This concludes the proof. 
\end{proof}
\begin{rem}
	We observe that  a direct proof of Equation~\eqref{eq:diff-Morse} can be easily conceived as direct consequence of the homotopy properties of $\mathcal{BF}_+^{sa}(\mathcal  H)$.
\end{rem}

\begin{rem}
 We observe that the definition of spectral flow for bounded selfadjoint Fredholm operators given in Definition \ref{def:new-spectralflow-def} is slightly different from the standard definition given in literature in which only continuity is required on the regularity of the path.  (Cfr. For further details, we refer the interested reader to 
\cite{Phi96,RS95, Wat15} and 
 references therein). Actually Definition \ref{def:new-spectralflow-def}  represents an efficient way for 
 computing the spectral flow even if it requires more regularity as well as a  transversality assumption 
 (the regularity of each crossing instant). However, it is worth to mentioning that, the spectral flow is a fixed endpoints homotopy invariant and for admissible paths (meaning for paths having invertible endpoints) is a free homotopy invariant. By density arguments, we observe that a $\mathscr C^1$-path 
 always exists  in any fixed endpoints homotopy class of the original path.  
\end{rem}

\begin{rem}
 It is worth noting, as already observed by author in \cite{Wat15}, that the spectral flow can be 
 defined in the more general case of continuous 
 paths of closed unbounded selfadjoint Fredholm operators that are 
 continuous with respect to the (metric) gap-topology (cf. \cite{BLP05} and references 
 therein). However in the special case in 
 which the domain of the operators is fixed, then the closed path of unbounded 
 selfadjoint Fredholm operators can be regarded as a continuous path 
 in $\mathcal{BF}^{sa}(\mathcal W, \mathcal  H)$. Moreover  this path is also continuous 
 with respect to the aforementioned gap-metric topology.
 
 The advantage to regard the paths in  $\mathcal{BF}^{sa}(\mathcal W, \mathcal  H)$ is that the 
 theory is straightforward as in the bounded case and, clearly, it is sufficient for the applications  
 studied in the present manuscript. 
\end{rem}

\subsection{On the Spectral Flow for Fredholm quadratic forms}\label{subsec:flussospettraleforme}
%
{{
Following authors in \cite{MPP05} we are in position to discuss the spectral  flow for bounded Fredholm quadratic forms.  
Let $(\mathcal H, \langle \cdot,\cdot \rangle)$ be a real separable Hilbert space. A continuous function $q : \mathcal H \to  \R$  is called a {\em quadratic form \/} provided that there is a symmetric bilinear form  $b_q : \mathcal H \times \mathcal H \to \R$ such that $q(u) = b_q(u,u)$ for all $u \in \mathcal H$.
 The bilinear form $b_q$ is determined by $q$ through the following polarization identity:
 \[
 b_q(u,v)=\dfrac14\{ q(u+v)-q(u-v)\} \textrm{ for all } u,v \in \mathcal H. 
 \]
This implies, in particular, that the bilinear form $b_q$  is also continuous. Let us denote by $\mathcal Q(\mathcal H)$  the set of all bounded quadratic  forms on $\mathcal H$ and we observe that $\mathcal Q(\mathcal H)$ is a Banach space with the norm defined by
\[ 
\norm{q} \= \sup_{\norm{u}\leq 1} |q(u)| \textrm{ for all }  q \in \mathcal Q(\mathcal H).
\]
\begin{lem}
	Let $q\in \mathcal Q(\mathcal H)$. Then there exists a unique self–adjoint operator $T \in \mathcal B(\mathcal H)$ called the representation of $q$ with respect to $\langle \cdot,\cdot\rangle $ with the property that
	    \begin{equation}\label{eq:2.2}
	     q(u) =\langle T\,u, u \rangle  \textrm{ for all  } u \in  \mathcal H
	     \end{equation}
	     Moreover 
	     \begin{equation}\label{eq:2.3}
	     b_q(u,v)=\langle T\,u, v \rangle  \textrm{ for all  } u, v \in \mathcal H.
	     \end{equation}
	    \end{lem}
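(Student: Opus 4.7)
The plan is to invoke the Riesz representation theorem and to use the symmetry of $b_q$ to promote the resulting operator to a self-adjoint one. First I would fix $u \in \mathcal H$ and consider the map $\phi_u : \mathcal H \to \R$ defined by $\phi_u(v) \= b_q(u,v)$. Since $b_q$ is bilinear and bounded (with $\abs{b_q(u,v)} \leq C \norm{u}\norm{v}$ for some $C>0$, a fact deducible from the continuity of $q$ together with the polarization identity), the functional $\phi_u$ is linear and bounded with $\norm{\phi_u} \leq C \norm{u}$. By the Riesz representation theorem there exists a unique element, which I will denote by $Tu \in \mathcal H$, satisfying $\phi_u(v) = \langle Tu, v\rangle$ for all $v \in \mathcal H$; this defines a map $T : \mathcal H \to \mathcal H$.

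Next I would verify that $T$ is linear and bounded. Linearity follows from the uniqueness clause in the Riesz representation theorem applied to each fixed second argument, combined with bilinearity of $b_q$ in its first slot. Boundedness is immediate from the estimate $\norm{Tu} = \norm{\phi_u} \leq C \norm{u}$, so $T \in \mathcal B(\mathcal H)$. To see that $T$ is self-adjoint, I would exploit the symmetry of $b_q$: for every $u,v \in \mathcal H$,
\begin{equation}
\langle Tu, v\rangle = b_q(u,v) = b_q(v,u) = \langle Tv, u\rangle = \langle u, Tv\rangle,
\end{equation}
which is precisely the definition of self-adjointness for a bounded operator on a real Hilbert space. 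The identity \eqref{eq:2.2} then follows by specializing \eqref{eq:2.3} to $v=u$, giving $q(u) = b_q(u,u) = \langle Tu, u\rangle$.

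Finally, for uniqueness, I would suppose $T'$ is another bounded self-adjoint operator with $q(u) = \langle T'u, u\rangle$ for all $u$. Then the quadratic form $u \mapsto \langle (T-T')u, u\rangle$ vanishes identically. Since $T-T'$ is self-adjoint, polarization gives $\langle (T-T')u, v\rangle = 0$ for all $u,v\in \mathcal H$, hence $T=T'$.

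The only step requiring any thought is verifying that $b_q$ is actually bounded starting from continuity of $q$ alone; everything else is a textbook-style application of Riesz. That boundedness follows from the polarization formula $b_q(u,v) = \tfrac14\{q(u+v)-q(u-v)\}$ together with the parallelogram-type bound $\norm{u\pm v}^2 \leq 2(\norm{u}^2+\norm{v}^2)$ and the homogeneity $q(\lambda w)=\lambda^2 q(w)$ (which itself comes from continuity of $q$ plus the definition through $b_q$), yielding $\abs{b_q(u,v)} \leq C\norm{u}\norm{v}$ with $C$ proportional to $\norm{q}$.
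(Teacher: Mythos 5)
Your proof is correct and follows essentially the same route as the paper's: apply the Riesz--Fr\'echet representation theorem to the bounded linear functional $v \mapsto b_q(u,v)$ and use the symmetry of $b_q$ to obtain self-adjointness of $T$. The additional details you supply (deriving $\abs{b_q(u,v)}\leq C\norm{u}\norm{v}$ from continuity of $q$ via polarization, and the uniqueness argument via polarization of $T-T'$) merely make explicit what the paper leaves implicit.
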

\begin{proof}
We start to observe that, from the polarization identity, it follows immediately that an operator satisfies Equation~\eqref{eq:2.2} if and only if it satisfies Equation 
	\eqref{eq:2.3}. Fix $u \in \mathcal H$. Since $b_q$ is continuous and bilinear, then the map  $v \mapsto b_q(u,v)$ is linear and continuous on $\mathcal H$. By the Riesz-Fréchét Representation Theorem, there is an element that we denote by $Tu$ having the property that $b_q(u,v)=\langle T\,u, v \rangle$ for all $v \in \mathcal H$. Since $b_q$ is symmetric, bilinear and continuous we conclude that the operator $T$ belongs to $\mathcal B(\mathcal H)$ and it is symmetric with respect to $ \langle \cdot,\cdot \rangle$.
 \end{proof}
 A quadratic form $q \in \mathcal Q(\mathcal H)$ is called {\em non-degenerate\/} provided that 
\[
b_q(u,v)=0 \textrm{ for all } v \in H \Rightarrow u=0.
\]
 \begin{defn}\label{def:Fqf}
 Under above notation, a quadratic form $q:\mathcal H\to \R$ is termed a \emph{Fredholm quadratic form\/}  if the operator $T$  given in Equation~\eqref{eq:2.2} (representing $q$ with respect to the scalar product of $\mathcal H$) is Fredholm. 
\end{defn}

\begin{rem}
The set $\mathcal{Q_F}(\mathcal H)$ is an open subset of $\mathcal Q(\mathcal H)$. We observe that the operator representing of a non-degenerate quadratic form has trivial kernel and therefore if the form is also Fredholm then such a representation belongs to $\GL(\mathcal H)$.
\end{rem}
\begin{lem}\label{thm:stable-perturbation}
	A quadratic form on $\mathcal H$ is weakly continuous if and only  its representation is a compact operator in $\mathcal H$.
\end{lem}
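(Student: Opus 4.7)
\bigskip
\noindent\textbf{Proof proposal.} The plan is to handle the two implications separately, with the easy direction first and the more delicate direction based on polarisation followed by a bilinear trick.

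For the ``if'' direction, assume the representing operator $T$ is compact and let $u_n \rightharpoonup u$ weakly in $\mathcal H$. Then $(u_n)$ is norm–bounded and $Tu_n \to Tu$ strongly, so from the splitting
\[
q(u_n)-q(u)=\langle Tu_n-Tu,\,u_n\rangle + \langle Tu,\,u_n-u\rangle
\]
the first term is bounded by $\|Tu_n-Tu\|\,\|u_n\|$ and hence tends to $0$, while the second tends to $0$ by definition of weak convergence. Thus $q(u_n)\to q(u)$, which is the sought weak continuity.

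For the ``only if'' direction, suppose $q$ is weakly (sequentially) continuous. The first step is to upgrade this to the statement that the associated bilinear form $b_q$ is \emph{jointly} weakly continuous: whenever $u_n\rightharpoonup u$ and $v_n\rightharpoonup v$, we have $u_n\pm v_n\rightharpoonup u\pm v$, so the polarisation identity
\[
b_q(u_n,v_n)=\tfrac14\bigl[q(u_n+v_n)-q(u_n-v_n)\bigr]\longrightarrow \tfrac14\bigl[q(u+v)-q(u-v)\bigr]=b_q(u,v).
\]
The second step is to show that $T$ maps weakly convergent sequences to norm convergent ones, which (since $\mathcal H$ is reflexive) characterises compactness. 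Let $u_n\rightharpoonup u$; then also $T(u_n-u)\rightharpoonup 0$ by continuity of $T$, and the identity
\[
\|Tu_n-Tu\|^2=\langle T(u_n-u),\,T(u_n-u)\rangle = b_q\bigl(u_n-u,\,T(u_n-u)\bigr)
\]
together with the joint weak continuity applied to the two sequences $u_n-u\rightharpoonup 0$ and $T(u_n-u)\rightharpoonup 0$ shows that the right-hand side tends to $b_q(0,0)=0$. Hence $Tu_n\to Tu$ in norm and $T$ is compact.

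The main obstacle — really the only non-routine step — is the passage from weak continuity of the quadratic form $q$ to joint weak continuity of the bilinear form $b_q$; once this is in hand, the compactness of $T$ falls out from the clever choice of test sequence $v_n=T(u_n-u)$ inside $b_q$, which turns the squared norm $\|Tu_n-Tu\|^2$ into a bilinear expression in two weakly null sequences.
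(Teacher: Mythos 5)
Your proof is correct, and it is self-contained, whereas the paper itself offers no argument for this lemma: it simply refers the reader to \cite[Appendix B]{BJP14}. Both directions of your argument are sound. The ``if'' direction is the standard splitting $q(u_n)-q(u)=\langle Tu_n-Tu,u_n\rangle+\langle Tu,u_n-u\rangle$ together with boundedness of weakly convergent sequences. For the ``only if'' direction, the polarisation step correctly upgrades weak sequential continuity of $q$ to joint weak sequential continuity of $b_q$, and the choice $v_n=T(u_n-u)$ (which is weakly null because $T$ is bounded, hence weak-to-weak continuous) turns $\norm{Tu_n-Tu}^2$ into $b_q\bigl(u_n-u,T(u_n-u)\bigr)\to 0$; since $\mathcal H$ is reflexive, this complete continuity of $T$ is equivalent to compactness, which you correctly invoke. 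One small point worth making explicit: the statement is true under the \emph{sequential} reading of weak continuity (equivalently, weak continuity on bounded sets), which is the convention in this literature and in the cited reference, and you flag this correctly with your parenthetical ``(sequentially)''; for the net/topological notion of weak continuity on all of $\mathcal H$ the ``if'' direction can fail, since weakly convergent nets need not be bounded. With that understanding, your argument is a clean and complete replacement for the external citation.
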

\begin{proof}
For the proof, we refer the interested reader, to \cite[Appendix B]{BJP14}. 
\end{proof}
We define a quadratic form $q : \mathcal H \to \R$ to be positive definite provided that, there exists $c>0$ such that 
\[
q(u) \geq c \norm{u}^2, \textrm{ for all } u \in \mathcal H.
\]
We are now entitled to  introduce the following definition which will be crucial in the whole paper. 
\begin{defn}\label{def:essentially-positive}
	A quadratic form $q : \mathcal H \to \R$ is termed {\em essentially positive\/} and we write $q\in \mathcal{Q_F}^+(\mathcal H)$, provided it is a weakly continuous 
perturbation of a positive definite quadratic (Fredholm) form.\end{defn}
It turns out that if $q\in 
\mathcal{Q_F}^+(\mathcal H)$ then its representation is and essentially  positive  selfadjoint Fredholm operator. 
\begin{rem} 
We observe that as consequence of Proposition \ref {thm:as69}, the set $\mathcal{Q_F}^+(\mathcal H)$ 	is contractible.
\end{rem}
\begin{defn}\label{def:Morse-essentially-positive}
	The {\em Morse index\/}, denoted by $\iMor(q)$, of an essentially positive Fredholm quadratic form $q: \mathcal H \to \R$ is defined as the Morse index of its representation. Thus, in symbol
	\[
	\iMor(q)= \dim E_-(T)
	\]
	where $T$ is the representation of $q$ and $E_-(T)$ denotes the negative spectral space of $T$.
\end{defn}

We are now in position to introduce the notion of {\em spectral flow\/} for (continuous) path of Fredholm quadratic forms. 
\begin{defn}\label{def:sfquadratic}
Let $q:[a,b]\rightarrow \mathcal {Q_F}(\mathcal H)$ be a continuous   path.
We define  the \emph{spectral flow of $q$}  as the  spectral 
flow of the continuous path induced by its representation; namely
\[
\spfl(q(t); t \in[a,b]) \= \spfl(L_t; t \in [a,b])
\]
where we denoted by $L_t $ the representation of q(t),  namely   $q(t)= \langle L_t \cdot, \cdot \rangle_{\mathcal H}$.
\end{defn}
Following authors in \cite{MPP05}, if a path $q : [a, b] \to \mathcal {Q_F}(\mathcal H)$ is differentiable at $t$ then the derivative $\dot q(t)$ is also a quadratic form. We shall say that the instant $t$ is a crossing point if $\ker b_q(t)\neq \{0\}$, and We shall say that the crossing instant $t$ is regular if the crossing form $\Gamma(q, t)$, defined as the restriction of the derivative $\dot q(t)$ to the subspace $\ker b_q(t)$, is non-degenerate. We observe that regular crossing points are isolated and that the property of having only regular crossing forms is generic for paths in $\mathcal {Q_F}(\mathcal H)$. 
As direct consequence of Definition \ref{def:sfquadratic} and Definition \ref{def:new-spectralflow-def}, if $q:[a,b] \to \mathcal {Q_F}(\mathcal H)$ is a $\mathscr C^1$-path of Fredholm quadratic forms having only regular crossings,  we infer that  
\begin{equation}\label{eq:sf-forme-crossings}
	\spfl(q(t); t \in[a,b])=\sum_{t \in (a,b)} \sgn \big(\Gamma(q(t))\big)- 
\iMor\big(\Gamma(q(a))\big)
+ \coiMor\big(\Gamma(q(b))\big) 
\end{equation} 
where the sum runs over all regular (and hence in a finite number) strictly contained in  $[a,b]$.
\begin{defn}\label{def:admissible-path-forms}
A path $q :[a, b] \to \mathcal {Q_F}(\mathcal H)$ is called {\em admissible\/} provided that it has non-degenerate endpoints.
\end{defn}
Thus by Definition \ref{def:admissible-path-forms} and Equation~\eqref{eq:sf-forme-crossings}, we get that for an admissible $\mathscr C^1$-path in  $\mathcal {Q_F}(\mathcal H)$ having only regular crossings, the spectral reduces to 
\begin{equation}\label{eq:sf-forme-crossings-admi}
	\spfl(q(t); t \in[a,b])=\sum_{t \in (a,b)} \sgn \big(\Gamma(q(t))\big)
\end{equation}
where the sum runs over all regular (and hence in a finite number) strictly contained in  $[a,b]$.
(Cfr. \cite[Theorem 4.1]{FPR99}). 
\begin{prop}\label{thm:sf-differenza-morse}
Let $q:[a,b] \to 	 \mathcal {Q_F}(\mathcal H)$ be a path of essentially positive Fredholm quadratic forms. Then 
\begin{equation} \label{eq:sf-diffMorse}
\spfl(q(t);t\in[a,b]) = \iMor\big(q(a)\big)-\iMor\big(q(b)\big).
\end{equation}
Furthermore, if $q(b)$ is positive definite, then 
$\spfl(q(t);t \in [a,b]) = \iMor(q(a))$. 
\end{prop}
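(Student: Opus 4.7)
The plan is to translate the claim into an assertion about the operators representing the quadratic forms, reduce to an admissible $\mathscr C^1$-path with only regular crossings, and then read off the identity crossing by crossing through an eigenvalue bookkeeping argument. The case of positive definite $q(b)$ then follows immediately.

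First I would let $L_t$ denote the bounded self-adjoint operator representing $q(t)$ on $\mathcal H$, so that by Definitions~\ref{def:sfquadratic} and~\ref{def:Morse-essentially-positive} one has $\spfl(q(t);t\in[a,b])=\spfl(L_t;t\in[a,b])$ and $\iMor(q(t))=\iMor(L_t)$, and $t\mapsto L_t$ is a continuous path in $\mathcal{BF}^{sa}_+(\mathcal H)$. Essential positivity of each $L_t$ produces a positive spectral gap between $0$ and $\sigma_{\mathrm{ess}}(L_t)$, so at each $t$ only finitely many eigenvalues of finite multiplicity can cluster near $0$ and the Morse index is finite and locally constant on the open subset of invertible operators of $\mathcal{BF}^{sa}_+(\mathcal H)$.

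Next I would reduce to an admissible $\mathscr C^1$-path with only regular crossings. Splicing in the two side-segments $s\mapsto L_a+(1-s)\epsilon\,\Id$ and $s\mapsto L_b+s\epsilon\,\Id$ for a small $\epsilon>0$, both of which have vanishing spectral flow (their unique crossing is at an endpoint with definite crossing form $\pm\epsilon\,\Id$) and preserve the Morse indices of the endpoints (since the kernel eigenvalues are shifted into the positive part of the spectrum), one obtains a new path with invertible endpoints $L_a+\epsilon\,\Id$ and $L_b+\epsilon\,\Id$ and the same spectral flow as $L$. By the density of $\mathscr C^1$-paths with only regular crossings in $\mathscr C^0([a,b],\mathcal{BF}^{sa}(\mathcal H))$ and the fixed-endpoint homotopy invariance of $\spfl$, one may further assume $L$ is $\mathscr C^1$ and has only regular crossings in $(a,b)$, so that the admissible crossing formula~\eqref{eq:sf-forme-crossings-admi} applies.

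The heart of the argument is the local eigenvalue analysis at a regular crossing $t_*\in(a,b)$. Since $L_{t_*}$ is essentially positive, $\ker L_{t_*}$ is finite dimensional and Kato's analytic perturbation theory provides smooth branches $\lambda_i(t)$ of eigenvalues of $L_t$ near $t_*$ with $\lambda_i(t_*)=0$ and derivatives $\dot\lambda_i(t_*)$ equal to the eigenvalues of the non-degenerate crossing form $\Gamma(L,t_*)$ on $\ker L_{t_*}$. Since $L_t$ is invertible on a punctured neighbourhood of $t_*$, the Morse index is constant there, and the jump across $t_*$ equals
\begin{equation}
\iMor(L_{t_*^+})-\iMor(L_{t_*^-})=\iMor(\Gamma(L,t_*))-\coiMor(\Gamma(L,t_*))=-\sgn\Gamma(L,t_*).
\end{equation}
Telescoping over all crossings and inserting~\eqref{eq:sf-forme-crossings-admi} yields $\iMor(L_b)-\iMor(L_a)=-\spfl(L;[a,b])$, which is exactly~\eqref{eq:sf-diffMorse}. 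If in addition $q(b)$ is positive definite, then $L_b$ is positive definite, $\iMor(L_b)=0$, and the formula collapses to $\spfl(q(t);t\in[a,b])=\iMor(q(a))$.

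The main obstacle I expect is the double reduction (to admissible endpoints and to regular crossings). It rests on three ingredients that need to be verified inside $\mathcal{BF}^{sa}_+(\mathcal H)$: invariance of the Morse index under the positive shift $L\mapsto L+\epsilon\,\Id$, the vanishing of the spectral flow along the two side-segments via a direct application of the crossing formula, and the generic density of $\mathscr C^1$-paths with only regular crossings combined with the homotopy invariance of $\spfl$. Once these are in place, the eigenvalue bookkeeping above, made possible by the finite-dimensionality enforced by essential positivity, closes the proof.
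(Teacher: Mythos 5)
Your proof is correct, but it follows a genuinely different route from the paper's. The paper disposes of this proposition in two lines: it passes to the representing operators $T_t$, notes that each is essentially positive, and invokes Proposition~\ref{thm:spfl-operatori-diff-rel-morse} (whose own proof is the $\varepsilon\,\Id$-shift homotopy followed by an appeal to the relative-Morse-index results of \cite{FPR99}); the second claim comes from the observation that a non-degenerate essentially positive form is positive. You instead prove the identity directly at the operator level: after the same endpoint $\varepsilon$-shift and a perturbation to an admissible $\mathscr C^1$ path with only regular crossings, you show that across each regular crossing the Morse index jumps by $\iMor(\Gamma)-\coiMor(\Gamma)=-\sgn\Gamma$ and telescope against the crossing formula \eqref{eq:sf-forme-crossings-admi}. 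This is essentially the ``direct proof as a consequence of the homotopy properties of $\mathcal{BF}^{sa}_+(\mathcal H)$'' that the paper only alludes to in a remark; it buys self-containedness and, notably, it does not need the hypothesis of Proposition~\ref{thm:spfl-operatori-diff-rel-morse} that $L_t-L_a$ be compact, a hypothesis which a general path of essentially positive forms need not satisfy, so your argument is if anything more robust than the citation the paper rests on, while the paper's route is shorter by delegating the bookkeeping to \cite{FPR99}. Two small points to tighten: choose $\varepsilon$ strictly smaller than the distance from $0$ to $\sigma(L_a)\setminus\{0\}$ and to $\sigma(L_b)\setminus\{0\}$, so that the two side segments really have their only crossing at an endpoint (with definite crossing form, hence zero contribution in \eqref{eq:spectral-flow-crossings}) and so that adding $\varepsilon\,\Id$ does not change the endpoint Morse indices; and since the perturbed path is only $\mathscr C^1$, the eigenvalue branches at a crossing should be produced by restricting to the finite-rank spectral projection onto the part of the spectrum near $0$ (first-order perturbation on a finite-dimensional block), rather than by Kato's analytic perturbation theory, which requires analyticity in the parameter.
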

\begin{proof}
Let $T:[a,b] \to \mathcal{BF}^{sa}( \mathcal  H)$ be the path of selfadjoint Fredholm operators that represents the path of Fredholm  quadratic forms $q$. As already observed, for each $t \in [a,b]$, each  $T(t)$ is an essentially positive Fredholm operator. The result follows by Proposition \ref{thm:spfl-operatori-diff-rel-morse} to the path $t \mapsto T_t$. \\
The second claim follows readily by observing that if $q(b)$ is essentially positive and non-degenerate, then it is positive and hence the Morse index of its representation vanishes.
This concludes the proof.
\end{proof}
\begin{defn}\label{def:generalized-Fredholm}
A {\em generalized   family of Fredholm quadratic forms\/}
parameterized by an interval is a smooth function   $q\colon
{\mathcal H}\to \R ,$ where ${\mathcal H}$ is a Hilbert bundle
over $[a,b]$ and $q$ is such that its restriction $q_t$  to the
fiber ${\mathcal H}_t$  over $t$ is a Fredholm quadratic form. If
$q_a$ and $q_b$ are non  degenerate, we define the spectral flow
$\spfl(q) = \spfl (q,[a,b])$  of such a family $q$ by choosing a
trivialization $$M \colon [a,b] \times {\mathcal H}_a \to {\mathcal
H}$$ and defining
\begin{equation} \label{sflow2}
\spfl(q) = \spfl (\tilde{q},[a,b])
\end{equation}
where $\tilde{q}(t)u =q_t (M_tu).$
\end{defn}
It follows from cogredience property that the right hand side of Equation~\eqref{sflow2} is independent of the choice of the trivialization.
Moreover all of the above properties hold true in this more
general case, including the calculation of the spectral flow
through a non degenerate crossing point,  provided we substitute the usual derivative with
the intrinsic derivative of a bundle map.


\vspace{1cm}
\noindent
\textsc{Prof. Alessandro Portaluri}\\
DISAFA\\
Università degli Studi di Torino\\
Largo Paolo Braccini 2 \\
10095 Grugliasco, Torino\\
Italy\\
E-mail: \email{alessandro.portaluri@unito.it}

\vspace{1cm}
\noindent
\textsc{Prof. Li Wu}\\
Department of Mathematics\\
Shandong University\\
Jinan, Shandong, 250100\\
The People's Republic of China \\
China\\
E-mail:\email{vvvli@sdu.edu.cn }

\vspace{1cm}
\noindent
\textsc{Dr. Ran Yang}\\
School of Science\\
East China  University of Technology\\
Nanchang, Jiangxi, 330013\\
The People's Republic of China \\
China\\
E-mail:\email{yangran2019@ecit.cn}
\end{document}